\newcommand \fk[1]{{{\mathfrak #1}}}
\newcommand \C[1]{{\mathcal #1}}
\newcommand \wti[1]{{\widetilde {#1}}}
\newcommand\fg{\mathfrak g}
\newcommand \bC{{\mathbb C}}
\newcommand \bH{{\mathbb H}}
\newcommand \bR{{\mathbb R}}
\newcommand \bZ{{\mathbb Z}}
\newcommand \bQ{{\mathbb Q}}
\newcommand\CO{{\C O}}
\newcommand\al{{\alpha}}
\newtheorem{theorem}{Theorem}[section]
\newtheorem{corollary}[theorem]{Corollary}
\newtheorem{lemma}[theorem]{Lemma}
\newtheorem{proposition}[theorem]{Proposition}
\newtheorem{remark}[theorem]{Remark}
\newcommand\Hom{\operatorname{Hom}}
\newcommand\Ind{\operatorname{Ind}}
\newcommand\tr{\operatorname{tr}}
\newcommand\triv{\mathsf{triv}}
\newcommand\sgn{\mathsf{sgn}}
\newcommand\refl{\mathsf{refl}}
\newcommand\Irr{\mathsf{Irr}}
\newcommand\el{\mathsf{ell}}
\newcommand\sr{\mathsf{sr}}
\newcommand\ssr{\mathsf{ssr}}
\newcommand\sol{\mathsf{sol}}
\newcommand\gen{\mathsf{gen}}
\newcommand\Pin{\mathsf{Pin}}
\newcommand\Good{\mathsf{Good}}
\newcommand\rk{\mathsf{rk}}
\newcommand\Sg{\mathsf{Sg}}
\def\<{\langle} 
\def\>{\rangle}
\numberwithin{equation}{section}
\begin{document}

\title[Weyl groups and the Dirac inequality]{Weyl groups, the Dirac inequality, and isolated unitary unramified representations}

\author{Dan Ciubotaru}
        \address[D. Ciubotaru]{Mathematical Institute\\ University of
          Oxford\\ Oxford, OX2 6GG, UK}
        \email{dan.ciubotaru@maths.ox.ac.uk}

\begin{abstract}
I present several applications of the Dirac inequality to the determination of isolated unitary representations and associated ``spectral gaps" in the case of unramified principal series. The method works particularly well in order to attach irreducible unitary representations to the large nilpotent orbits (e.g., regular, subregular) in the Langlands dual complex Lie algebra. The results could be viewed as a $p$-adic analogue of Salamanca-Riba's classification of irreducible unitary $(\mathfrak g,K)$-modules with strongly regular infinitesimal character.
\end{abstract}

\subjclass[2010]{22E50, 20C08}

\maketitle

\setcounter{tocdepth}{1}

\section{Introduction}

\subsection{}Let $\mathsf k$ be a nonarchimedean local field with finite residue field of cardinality $q$ and valuation $\mathsf{val}_{\mathsf k}$. Let $G$ be the $\mathsf k$-points of a $\mathsf k$-split reductive group whose Langlands dual is the complex reductive group $G^\vee$. Let $T$ be a maximal $\mathsf k$-split torus in $G$, $\C R=( X^*(T),\Phi,X_*(T),\Phi^\vee)$ the corresponding root datum, and $W$ the finite Weyl group. Let $\langle~,~\rangle$ denote the natural pairing between $X_*(T)$ and $X^*(T)$. Fix a Borel subgroup $B\supset T$ of $G$ and let $\Phi^+$ and $\Pi$ be the subset of positive roots and simple roots, respectively. Set $T^\vee=X^*(T)\otimes_\bZ \bC^\times$, a maximal torus in $G^\vee$ with Lie algebra $\fk t^\vee=X^*(T)\otimes_\bZ \bC$, a Cartan subalgebra of the Lie algebra $\fg^\vee$ of $G^\vee$. Let $T^\vee_\bR=X^*(T)\otimes_\bZ \bR_{>0}$ and $\mathfrak t_\bR^\vee=X^*(T)\otimes_\bZ \bR$. Consider the group homomorphism
\[\textsf{val}_T: T\to X_*(T),\quad \langle \mathsf{val}_T(t),\lambda\rangle=\mathsf{val}_{\mathsf k}(\lambda(t)), \text{ for all }\lambda\in X^*(T).
\]
For every $s\in T^\vee=\Hom(X_*(T),\bC^\times)$, define the unramified character $\chi_s\in X^*(T)$ by $\chi_s=s\circ \textsf{val}_T.$ Concretely, if $\varpi_{\mathsf k}$ is a uniformiser of $\mathsf k$, then 
\[\chi_s(\varphi(\varpi_{\mathsf k}))=\varphi(s),\text{ for all }\varphi\in X_*(T)=X^*(T^\vee).
\]
For every $s\in T^\vee$, inflate $\chi_s$ to a character of $B$ (by making it trivial on the unipotent radical of $B$) and define the unramified principal series 
$X(s)=i_B^G(\chi_s)$; here $i$ denotes the functor of normalised parabolic induction. Then $X(s)$ has a finite composition series which only depends on the $W$-orbit of $s$. I will be interested in the unitarisability question for the composition factors of $X(s)$ in the case when $s\in T^\vee_{\bR}$, more precisely, when
\begin{equation}
s=\exp(\log(q)\nu),\text{ for some }\nu\in \mathfrak t^\vee_\bR.
\end{equation}
Write $X(\nu)$ in place of $X(s)$ in this case. Denote
\begin{equation}
G^\vee(\nu)=\{g\in G^\vee\mid \operatorname{Ad}(g)\nu=\nu\},\quad \fg^\vee_1(\nu)=\{x\in\fg^\vee\mid [\nu,x]=x\}.
\end{equation}
The group $G^\vee(\nu)$ acts on $\fg^\vee_1(\nu)$ via the adjoint action with finitely many orbits. By the results of Kazhdan and Lusztig \cite{KL,L2}, the composition factors of $X(\nu)$ are parameterised by pairs $(\C C,\C L)$, where 
$\C C$  is a $G^\vee(\nu)$-orbit on $\fg^\vee_1(\nu)$ and $\C L$ is an irreducible $G^\vee(\nu)$-equivariant local system on $\C C$ of Springer type. Denote the corresponding irreducible factor of $X(\nu)$ by $V(\C C,\C L)$. I am interested in the question: {\it for which $(\C C, \C L)$ is $V(\C C,\C L)$ unitarisable?}

There is a simpler, but equivalent, setting. Lusztig \cite{L1} defined an associative algebra, the graded affine Hecke algebra, $\bH$ from the root datum $\C R$ (in fact, it only depends on the root system). Let $\mathfrak t=X_*(T)\otimes_\bZ \bC$ and $\mathsf T^\bullet(\mathfrak t)$ be the tensor algebra on $\mathfrak t$. The algebra $\bH$ is the quotient of the tensor product algebra $\bC[W]\otimes \mathsf T^\bullet(\mathfrak t)$ by the relations
\begin{equation}
s_\alpha\cdot \omega-s_\alpha(\omega)\cdot s_\alpha= \langle \omega,\alpha\rangle,\text{ for all }\omega\in \mathfrak t,\ \alpha\in\Pi.
\end{equation}
Here $s_\alpha$ denotes the reflection for the root $\alpha$. The algebra $\bH$ has a natural filtration where $w\in W$ gets degree $0$ and $\omega\in\fk t$ degree 1, for which the associated graded algebra is $S(\fk t)\rtimes \bC[W]$. The centre of $\bH$ is $S(\mathfrak t)^W$ and so, the central characters of $\bH$ are parameterised by $W$-orbits in $\mathfrak t^\vee.$ By \cite{L2}, the simple $\bH$-modules with central character $\nu\in\mathfrak t^\vee_\bR$ are parameterised by the same data $(\C C,\C L)$ as before. Denote by $\overline V(\C C, \C L)$ the corresponding simple $\bH$-module. All simple $\bH$-modules are finite dimensional since $\bH$ has finite rank over its centre. 

The algebra $\bH$ has a star operation $*$ (e.g., \cite{BM2},\cite{Op}) with respect to which one defines unitary $\bH$-modules:
\begin{equation}
w^*=w^{-1},\quad \omega^*=-\omega+\sum_{\alpha\in\Phi^+} \langle \omega,\alpha\rangle s_\alpha,\quad w\in W,\ \omega\in\fk t_\bR.
\end{equation}

 The main result of \cite{BM1} (see \cite{Ci3} for a generalisation) can be rephrased as follows. 

\begin{theorem}[Barbasch-Moy]\label{t:BM} The $G$-representation $V(\C C,\C L)$ is unitary if and only if the $\bH$-module $\overline V(\C C,\C L)$ is unitary.
\end{theorem}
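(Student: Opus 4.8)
The plan is to deduce the equivalence from two reductions, each of which transports unitarity faithfully. The first is the Borel--Casselman equivalence: the functor $V\mapsto V^I$ of Iwahori-fixed vectors identifies the category of smooth $G$-representations generated by their $I$-fixed vectors with the category of finite-dimensional modules over the Iwahori--Hecke algebra $\CH=\CH(G,I)$. Since $V(\C C,\C L)$ is irreducible with $V(\C C,\C L)^I\neq 0$, it is generated by $V(\C C,\C L)^I$, and under this equivalence it goes to a simple $\CH$-module whose central character is the $W$-orbit of $s=\exp(\log(q)\nu)$. I would then check that the equivalence is compatible with Hermitian structures: the natural $*$-operation $T_w^*=T_{w^{-1}}$ on $\CH$ is the one matching $G$-invariant Hermitian forms under $V\mapsto V^I$, so restricting an invariant form on $V(\C C,\C L)$ yields a $*$-invariant form on $V(\C C,\C L)^I$; conversely, both modules carry a Hermitian form unique up to a real scalar, the form on $V(\C C,\C L)$ is nondegenerate on $V(\C C,\C L)^I$ because $e_I$ is self-adjoint and $V(\C C,\C L)$ is generated by $V(\C C,\C L)^I$, and (this is the substantive input of \cite{BM1}, see also \cite{Ci3}) positive-definiteness on $V(\C C,\C L)^I$ forces unitarity of $V(\C C,\C L)$. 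Hence $V(\C C,\C L)$ is unitary if and only if the associated simple $\CH$-module is $*$-unitary.

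The second reduction, due to Lusztig \cite{L1,L2}, passes from the affine Hecke algebra $\CH$ to the graded affine Hecke algebra $\bH$ at the real central character $s=\exp(\log(q)\nu)$. Here I would complete $\CH$ at the maximal ideal of $\bC[X_*(T)]^W$ attached to $Ws$ and $\bH$ at the maximal ideal of $S(\ft)^W$ attached to $W\nu$; Lusztig's isomorphism of completed algebras then yields an equivalence of the corresponding blocks which sends $\overline V(\C C,\C L)$ to the $\CH$-module attached to $V(\C C,\C L)$, compatibly with standard modules and the geometric (Kazhdan--Lusztig and Springer-type) parameterisations. If the integral root subsystem $\Phi_\nu=\{\alpha\in\Phi:\langle\alpha^\vee,\nu\rangle\in\bZ\}$ is proper one first lands in $\bH(\Phi_\nu)$ and then applies the analogous reduction to integral central character inside $\bH$; for the $\nu$ attached to the large nilpotent orbits $\Phi_\nu=\Phi$ and this step is vacuous. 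Because $s\in T^\vee_\bR$ the reduction should respect the real forms on which the two $*$-operations live --- the one on $\CH$ coming from $G$, and the one on $\bH$ given by $w^*=w^{-1}$ and $\omega^*=-\omega+\sum_{\alpha\in\Phi^+}\langle\omega,\alpha\rangle s_\alpha$ (\cite{BM2,Op}) --- so that $\overline V(\C C,\C L)$ is unitary exactly when the $\CH$-module attached to $V(\C C,\C L)$ is. Composing the two equivalences gives the theorem.

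I expect the main obstacle to be the compatibility of the $*$-operations under the second reduction. Lusztig's algebra isomorphism is not literally $*$-equivariant: on the Bernstein presentation of $\CH$ one must match the $p$-adic $*$-operation with the $\bH$-operation, in particular with the correction term $\sum_{\alpha\in\Phi^+}\langle\omega,\alpha\rangle s_\alpha$ in $\omega^*$, and one is led to adjust the isomorphism by an automorphism of the completed graded algebra and then argue that this automorphism does not alter the definiteness of a Hermitian form on a module, because it is, up to conjugation by a positive self-adjoint element, the identity on the relevant Hermitian pairings. This is the technical core of \cite{BM1}, carried out there through intertwining operators and explicit Hermitian forms on standard modules and generalised in \cite{Ci3}; equivalently one checks that both reductions preserve the canonical pairing between standard and costandard modules, hence the induced (Jantzen-type) Hermitian form on $\overline V(\C C,\C L)$ together with its signature. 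The unitarity transfer in the Borel--Casselman step is comparatively soft, but still relies on the nontrivial fact that unitarity of a smooth $G$-representation with nonzero Iwahori-fixed vectors can be tested on those vectors.
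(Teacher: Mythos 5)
The paper does not prove this statement at all; it is quoted from Barbasch--Moy \cite{BM1} (together with \cite{BM2}, and the generalisation \cite{Ci3}), and your two-step outline --- the Borel--Casselman equivalence with unitarity tested on Iwahori-fixed vectors, followed by Lusztig's reduction from the affine to the graded Hecke algebra with the $*$-operations matched at real central character --- is precisely the strategy of those references. Since you explicitly defer the two genuinely hard points (that positive-definiteness on the $I$-fixed vectors forces unitarity of the full $G$-representation, and that the passage to $\bH$ can be corrected so as to preserve Hermitian forms and their signatures) to \cite{BM1} and \cite{BM2}/\cite{Ci3}, your argument is a faithful reconstruction of the cited proof rather than an independent one, which is consistent with how the paper itself treats the theorem.
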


The unitarisability question can therefore be translated to the algebraic setting of finite-dimensional $\bH$-modules. In this setting, a pleasant tool for showing that modules are {\it not} unitary is the Dirac inequality inspired by \cite{Pa,HP}, proved in \cite{BCT}, and recalled below, in section \ref{s:dirac}.  We already know that the general shape of the unitary dual is complicated, see for example, \cite{Ta, Ba, Ci-F4, BC-E8}, in ways that can't be fully captured by the Dirac inequality alone. But of particular interest in the theory are the irreducible unitary representations which are isolated in the unitary dual \cite{Vo}, and, in this paper, I try to exemplify how one can use the Dirac inequality in its stronger form, Theorem \ref{dirac-ineq}, in order to determine certain isolated unitary representations and associated  spectral gaps for the parameter $\nu$. 

\smallskip

The results below are analogous to the classification of irreducible unitary $(\mathfrak g,K)$-modules with strongly regular infinitesimal character \cite{SR} (see also \cite{SRV}), not just in spirit, but also in terms of the general method used for proving non-unitarity: Parthasarathy's Dirac inequality.

\subsection{}I present the main result. Let $\C N^\vee$ denote the nilpotent cone in $\fg^\vee$. As it is well known, there exists a unique open dense $G^\vee$-orbit in $\C N^\vee$, the regular orbit $\CO^\vee_{\mathsf r}$. If $\fg^\vee$ is simple, there exists a unique open dense orbit in $\C N^\vee\setminus \CO^\vee_{\mathsf r}$, the subregular orbit $\CO^\vee_{\mathsf{sr}}$. If in addition, $\fg^\vee$ has rank at least 4 and it is not of type $C_n$ or $D_n$, there exists a unique open dense orbit in $\overline \CO^\vee_{\mathsf{sr}}\setminus  \CO^\vee_{\mathsf{sr}}$, that one might call the sub-subregular orbit $\CO^\vee_{\mathsf{ssr}}$. For every nilpotent orbit $\CO^\vee$, fix a representative $e^\vee \in \CO^\vee$ and a Lie triple $\{e^\vee, h^\vee, f^\vee\}$, such that $h^\vee=h^\vee_{\CO^\vee}\in\fk t^\vee_\bR$. 

There is an involution on the set of smooth irreducible representations of $G$ defined by Zelevinsky and, in general, by Aubert  \cite{Au}. In the correspondence from Theorem  \ref{t:BM}, this involution translates to the Iwahori-Matsumoto involution for affine Hecke algebras which is easily seen to map unitary modules to unitary modules. 

The Dirac inequality can be used efficiently in order to prove the following:
 
 \begin{theorem}\label{t:main}
Suppose $G$ is a simple split $p$-adic group of rank at least $2$. Let $\nu\in \mathfrak t^\vee_\bR$ be given.
\begin{enumerate}
\item If  $|\nu|> |h^\vee_{\mathsf{sr}}/2|$  then the only unitary subquotients of $X(\nu)$ are the trivial and the Steinberg representations, which occur when $\nu\in W\cdot h^\vee_{\mathsf{r}}/2$.
\item If $|\nu|=|h^\vee_{\mathsf{sr}}/2|$, then $X(\nu)$ has a unitary subquotient if and only if $\nu\in W\cdot h^\vee_{\mathsf{sr}}/2$. Suppose this is the case, then:
\begin{enumerate}
\item if $\rk(\fg^\vee)=2$, every subquotient of $X(h^\vee_{\mathsf{sr}}/2)$ is unitary;
\item if $\rk(\fg^\vee)=3$, the unitary subquotients of $X(h^\vee_{\mathsf{sr}}/2)$ are given in section \ref{s:5}, see Theorem \ref{t:unitary};
\item if $\rk(\fg^\vee)\ge 4$, the only unitary subquotients of $X(h^\vee_{\mathsf{sr}}/2)$ are the irreducible tempered ones and their Aubert-Zelevinsky duals, except when $G=SO(2n+1)$  where the additional unitary subquotients are listed in Theorem \ref{t:unitary}.
\end{enumerate}
\item If $G$ has rank at least $4$ and it is not a special orthogonal group, then
\begin{enumerate}
\item if $|h^\vee_{\mathsf{ssr}}/2|<|\nu|<|h^\vee_{\mathsf{sr}}/2|$, no subquotient of $X(\nu)$ is unitary;
\item if $|\nu|=|h^\vee_{\mathsf{ssr}}/2|$, then $X(\nu)$ has unitary subquotients if and only if $\nu\in W\cdot h^\vee_{\mathsf{ssr}}/2$;
\item the subquotients of $X(h^\vee_{\mathsf{ssr}}/2)$ that are unitary are given in Theorem \ref{t:unitary}.
\end{enumerate}
\end{enumerate}

\end{theorem}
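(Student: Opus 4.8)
The plan is to transfer the question to the graded affine Hecke algebra $\bH$ and then play the Dirac inequality of Theorem \ref{dirac-ineq} against the ordering of the neutral elements $h^\vee_\CO$ by norm. By Theorem \ref{t:BM} and exactness of normalised parabolic induction, $V(\C C,\C L)$ is unitary if and only if the simple $\bH$-module $\overline V(\C C,\C L)$ is, so everything becomes a statement about simple $\bH$-modules of a prescribed real central character $\nu\in\ft^\vee_\bR$; moreover one may replace $\nu$ by a $W$-conjugate and pass freely to the Iwahori-Matsumoto dual, since that involution preserves the star operation. The only point at which the classification of simple groups really enters is an elementary fact I would record at the start: under the stated hypotheses, $|h^\vee_{\mathsf r}/2|>|h^\vee_{\mathsf{sr}}/2|>|h^\vee_{\mathsf{ssr}}/2|$ are the three largest values of $|h^\vee_\CO/2|$ as $\CO$ runs over the nonzero nilpotent $G^\vee$-orbits, each attained by a unique orbit, and the exclusion of types $C_n,D_n$ in the sub-subregular statement is precisely the failure of uniqueness of $\CO^\vee_{\mathsf{ssr}}$ there. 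This is a finite inspection of weighted Dynkin diagrams.

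The engine is Theorem \ref{dirac-ineq}: if $\overline V(\C C,\C L)$ is unitary with central character $\nu$, then for every genuine $\wti W$-type $\wti\sigma$ occurring in $\overline V(\C C,\C L)\otimes S$ (the tensor with the spin module) one has $|\nu|\le |h^\vee_{\CO_{\wti\sigma}}/2|$, where $\CO_{\wti\sigma}$ is the nilpotent orbit attached to $\wti\sigma$ (its $\wti W$-central character being a $W$-conjugate of $h^\vee_{\CO_{\wti\sigma}}/2$), with equality precisely when $\wti\sigma$ contributes to the Dirac cohomology of $\overline V(\C C,\C L)$, and then $\nu\in W\cdot h^\vee_{\CO_{\wti\sigma}}/2$. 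Using the triangularity of this correspondence with respect to the closure order together with Iwahori-Matsumoto symmetry, the clean consequence I would extract is: a unitary $\overline V(\C C,\C L)$ with $|\nu|\ge |h^\vee_{\mathsf{sr}}/2|$ involves only nilpotent orbits $\CO$ with $|h^\vee_\CO|\ge|h^\vee_{\mathsf{sr}}|$, and the only simple $\bH$-modules so constrained are the two one-dimensional modules (the trivial and Steinberg modules, attached to $\CO^\vee_{\mathsf r}$, of central character $\rho^\vee=h^\vee_{\mathsf r}/2$), the irreducible tempered modules of central character $h^\vee_{\mathsf{sr}}/2$, and their Aubert-Zelevinsky duals; there is a parallel statement with $\CO^\vee_{\mathsf{ssr}}$ in place of $\CO^\vee_{\mathsf{sr}}$, where the additional $SO(2n+1)$ family and the small-rank exceptional modules appear.

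With this in hand the non-unitarity assertions are bookkeeping in the norm ordering. Part (1): a unitary subquotient of $X(\nu)$ with $|\nu|>|h^\vee_{\mathsf{sr}}/2|$ can only involve $\CO^\vee_{\mathsf r}$, the unique orbit with $|h^\vee_\CO|>|h^\vee_{\mathsf{sr}}|$, so it is one of the two one-dimensional modules, which have $\nu\in W\cdot h^\vee_{\mathsf r}/2$. The ``only if'' parts of (2) and (3): at $|\nu|=|h^\vee_{\mathsf{sr}}/2|$ (respectively $|h^\vee_{\mathsf{ssr}}/2|$) a unitary module cannot be one-dimensional, so by the previous paragraph it involves $\CO^\vee_{\mathsf{sr}}$ (respectively $\CO^\vee_{\mathsf{ssr}}$) with equality in Theorem \ref{dirac-ineq}, which forces $\nu\in W\cdot h^\vee_{\mathsf{sr}}/2$ (respectively $W\cdot h^\vee_{\mathsf{ssr}}/2$). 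Finally 3(a): the open interval $|h^\vee_{\mathsf{ssr}}/2|<|\nu|<|h^\vee_{\mathsf{sr}}/2|$ contains no value $|h^\vee_\CO/2|$ at all, so by the inequality nothing there is unitary.

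The positive assertions --- unitarity of the trivial and Steinberg modules, of the irreducible tempered modules and their Aubert-Zelevinsky duals in 2(c) and 3(c), of every subquotient of $X(h^\vee_{\mathsf{sr}}/2)$ in rank $2$, and of the explicit lists in rank $3$, for $SO(2n+1)$, and at $h^\vee_{\mathsf{ssr}}/2$ --- cannot be obtained from the Dirac inequality and require separate input. Tempered modules are unitary, and the Iwahori-Matsumoto involution preserves unitarity, so the trivial module (dual to Steinberg) and the Aubert-Zelevinsky duals of tempered modules are covered at once; the remaining finite lists I would settle by a direct examination of the (small) principal series concerned together with the known descriptions of the relevant unitary duals --- Barbasch's determination of the Iwahori-spherical unitary dual of split odd orthogonal groups \cite{Ba} and the low-rank computations recorded in Section \ref{s:5} and Theorem \ref{t:unitary} --- presenting each such module either as a constituent that unitarily induces or as an endpoint of a complementary series. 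I expect the two real difficulties to be exactly these positive statements and the sharp equality clause of Theorem \ref{dirac-ineq}, on which the identification of the boundary parameters rests; once those are available, the theorem follows from the sequence of norm comparisons above.
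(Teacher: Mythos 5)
Your overall strategy (transfer to $\bH$ via Barbasch--Moy, exploit Iwahori--Matsumoto symmetry, use the Dirac inequality for non-unitarity and separate inputs for the positive statements) matches the paper's, but the proposal is missing the technical core that makes the Dirac inequality bite, and one of your non-unitarity arguments is a non sequitur. Theorem \ref{dirac-ineq} gives the bound $|\nu|\le|h^\vee_{\CO^\vee}/2|$ only for orbits $\CO^\vee\subset \C N^\vee_\sol$ for which the hom-condition (\ref{e:cond-D}) holds, i.e.\ only when the $W$-structure of $\overline V$ forces some $\wti\sigma\in\Theta^{-1}(\CO^\vee)$ to occur in $\overline V|_W\otimes\C S$. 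Your ``clean consequence'' --- that a unitary module with $|\nu|\ge|h^\vee_\sr/2|$ can only be trivial/Steinberg, tempered at $h^\vee_\sr/2$, or an AZ dual --- is exactly the hard part, and it does not follow from ``triangularity plus IM symmetry.'' The paper obtains it by explicitly computing the sets $\Good(\CO^\vee_{\mathsf r})$, $\Good(\CO^\vee_\sr)$, $\Good(\CO^\vee_\ssr)$ of $W$-types not detected by the inequality (Lemmas \ref{l:reg}, \ref{l:subreg}, Proposition \ref{p:W-subsub}; this uses spin Weyl group representation theory, restriction to an $S_5$ parabolic, Lemma \ref{l:spin-B}, and chevie computations in the exceptional cases), and then by running through the Kazhdan--Lusztig classification of the simple modules at these central characters together with the chain argument of Lemma \ref{l:tensor-refl} to see which modules can be built from good $W$-types only. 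As stated, your consequence is also false without the type-$C_n$ exception (the module attached to $(2n-2,1,1)$, which is why $G=SO(2n+1)$ is excepted already at the subregular level in Theorem \ref{t:unitary}(2)(c), not only at the sub-subregular one).

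The second genuine gap is your argument for 3(a): ``the open interval $(|h^\vee_\ssr/2|,|h^\vee_\sr/2|)$ contains no value $|h^\vee_{\CO^\vee}/2|$, so by the inequality nothing there is unitary.'' The Dirac inequality does not force $|\nu|$ to equal one of the values $|h^\vee_{\CO^\vee}/2|$; it only gives upper bounds, and only for orbits satisfying (\ref{e:cond-D}). Unitary modules occur at continuously varying $\nu$ (complementary series), so the absence of such values in an interval proves nothing by itself. The paper's Proposition \ref{p:gaps}(3) instead argues: under the type hypotheses every solvable orbit other than $\CO^\vee_{\mathsf r},\CO^\vee_\sr$ lies below $\CO^\vee_\ssr$, so Corollary \ref{c:good} forces $\overline V|_W$ to consist of $\Good(\CO^\vee_\sr)$-types; then the constraint that the lowest $W$-types of $\overline V$ and of its IM dual are attached to orbits other than $\CO^\vee_{\mathsf r},\CO^\vee_\sr$, combined with the reflection-tensoring chain condition from Lemma \ref{l:tensor-refl} and the vanishing (\ref{e:link}) between the two halves $H$ and $L$ of $\Good(\CO^\vee_\sr)$, shows no such module exists with $|\nu|<|h^\vee_\sr/2|$. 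Without these ingredients (the good-$W$-type computations, the module-by-module analysis of Theorem \ref{t:unitary}, and the $H/L$ linking argument), the non-unitarity halves of (1)--(3) are not established, and the equality clauses forcing $\nu\in W\cdot h^\vee_\sr/2$ or $W\cdot h^\vee_\ssr/2$ also rest on knowing that every non-trivial good $W$-type actually satisfies (\ref{e:cond-D}) for the relevant orbit.
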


The fact that the only irreducible unitary representations with parameter $h^\vee_\mathsf{r}/2$ are the trivial and the Steinberg representations has been known by Casselman's work \cite{Cas}. That this is also a consequence of the Dirac inequality has already been noted in \cite{BCT}. The other results in Theorem \ref{t:main} are new as far as I know. I emphasise that at each one of the parameters $\nu=h^\vee_{\mathsf r}/2$, $h^\vee_\sr/2$, $h^\vee_\ssr/2$, the number of non-isomorphic simple subquotients of $X(\nu)$ is exponential in $\rk(\fg^\vee)$, so Theorems \ref{t:main} and \ref{t:unitary} say that very few of them are unitary (not more than $8$).

\smallskip

As it can be easily appreciated, the Dirac inequality method works well when the parameter $\nu$ is attached to large nilpotent orbits. At the other extreme, if $\nu$ is small,  the Dirac inequality does not give any information. More precisely, I show that there is a unique nilpotent orbit $\CO^\vee_{\min}$ such that the Dirac inequality is vacous when $|\nu|\le |h^\vee_{\min}/2|$. (The only new observation here is the {\it uniqueness} of this orbit in relation to the other nilpotent orbits that appear in the Dirac inequality, Proposition \ref{p:O-min}.) 
This makes the parameter $\nu\in W\cdot h^\vee_{\min}/2$ particularly interesting. To exemplify this, I study the example of $F_4$, where $\CO^\vee_{\min}=F_4(a_3)$, and one can deduce from \cite{Ci,Ci-F4} that there are $19$ non-isomorphic irreducible subquotients of $X(h^\vee_{\min}/2)$, exactly $18$ of which are unitary (see Proposition \ref{th:F4}). 

\medskip

\noindent{\bf Acknowledgements.} I thank the referee for several useful comments, in particular for the analogy with the real groups case. This research was supported in part by the EPSRC grants EP/N033922/1 (2016) and EP/V046713/1 (2021).

\section{The Dirac inequality}\label{s:dirac}

Fix  a $W$-invariant positive definite symmetric form $(~,~)$ on $\fk t^\vee_\bR$, extend it to a symmetric bilinear form on $\fk t^\vee$. Denote $|a|=(a,a)^{1/2}$ for every $a\in \fk t^\vee_\bR$. Let $C(\fk t^\vee)$ denote the Clifford algebra defined by $\fk t$ and $(~,~)$ and relations
\[a\cdot a'+a'\cdot a=-2(a,a'),\quad a,a'\in\fk t^\vee.
\]
The algebra $C(\fk t^\vee)$ has a $\bZ/2$-grading given by the parity of the degree and a $\bZ$-filtration by the degree ($\fk t^\vee$ gets degree $1$) with associated graded algebra $\bigwedge\fk t^\vee$. Define $\epsilon$ to be the identity of the even part $C(\fk t^\vee)_0$ of $C(\fk t^\vee)$ and the negative of the identity on the odd part. The transpose anti-automorphism $x\to x^t$ of $C(\fk t^\vee)$ is defined to be the identity on $\fk t^\vee$. Then the pin group is
\[\Pin(\fk t^\vee)=\{x\in C(\fk t^\vee)^\times\mid \epsilon(x) \omega x^t\in \fk t^\vee,\text{ for all }\omega\in\fk t^\vee,\ x^t=x^{-1}\}.
\]
This is a two-fold cover of the orthogonal group $O(\fk t^\vee)$ via the projection map
\[x\mapsto (p(x):\fk t^\vee\to \fk t^\vee,\ p(x)(\omega)=\epsilon(x) \omega x^t.
\]
The kernel of $p$ is $\{1,z\}\cong C_2$, where $z$ is the negative of the identity element in $C(\mathfrak t^\vee)$. Define the pin cover of the Weyl group
\begin{equation}
\wti W=p^{-1}(W)\subset \Pin(\fk t^\vee),
\end{equation}
a two-fold cover of $W$. It is generated by the order $4$ elements
\[
\wti s_\al=\frac {\alpha}{|\alpha|},\quad \alpha^\vee\in\Phi^{+}.
\]
The following central element of $\bC[\wti W]$ plays an important role \cite{BCT}:
\begin{equation}
\Omega_{\wti W}=\frac{z}{4}\sum_{\alpha,\beta\in\Phi^+,~s_\alpha(\beta)\in \Phi^-}|\alpha| |\beta| \wti s_\alpha \wti s_\beta.
\end{equation}
Let $\Irr_\gen\wti W$ denote the set of (equivalence classes of) irreducible $\wti W$-representations in which $z$ acts by the negative of the identity (the genuine representations). If $\wti \sigma\in \Irr_\gen\wti W$, let $N(\wti\sigma)$ denote the scalar by which $\Omega_{\wti W}$ acts in $\wti \sigma$. 

If $\dim \fk t^\vee$ is even, $C(\fk t^\vee)$ is a central simple algebra with a unique complex simple module $S$ (the space of spinors). When $\dim\fk t^\vee$ is odd, the even part $C(\fk t^\vee)_0$ is central simple and its unique simple module can be extended to $C(\fk t^\vee)$ in two inequivalent ways, $S^+$ and $S^-$. To make the notation uniform, let $\C S=S$ in the even case, and $\C S=S^++S^-$ in the odd case.  If the root datum $\C R$ is semisimple, the restriction of $S$, or $S^+$, $S^-$ to $\wti W$ is irreducible and 
\[ N(S)=(\rho,\rho),\text{ where }\rho=\frac 12\sum_{\alpha\in\Phi^+} \alpha\in \fk t^\vee.
\]
It was observed and proved case-by-case in \cite{Ci}, then uniformly and sharpened in \cite{CH}, that there exists a close relation between $\Irr_\gen \wti W$ and the nilpotent cone of $\fg^\vee$ and Springer's correspondence. Let $\C N^\vee_\sol$ denote the set of nilpotent elements $e^\vee$ of $\fg^\vee$ such that the centralizer of $e^\vee$ in $\fg^\vee$ is a solvable algebra. The set $\C N^\vee_\sol$ is a union of adjoint nilpotent orbits in $\fg^\vee$. The classification of these orbits for every simple Lie algebra is given in \cite{CH}. Recall that Springer's correspondence is a bijection between $\Irr W$ and the set $\{(\CO^\vee,\C E)\}$, $\CO^\vee$ a nilpotent $G^\vee$-orbit in $\fg^\vee$  and $\C E$ an irreducible $G^\vee$-equivariant local system on $\CO^\vee$ of Springer type. Denote by $\sigma(\CO^\vee,\C E)$ the corresponding irreducible $W$-representation. To make the relation with the $W$-structure of standard modules for the Hecke algebra more convenient, I normalise this correspondence so that $\sigma(0,\triv)=\triv$. Then the observation is that there exists a natural surjective map
\begin{equation}\label{Theta}
\Theta: \Irr_\gen \wti W\to \C N^\vee_\sol
\end{equation}
such that if $\Theta(\wti\sigma)=\CO^\vee$, then
\begin{enumerate}
\item there exists a local system $\C E$ on $\CO^\vee$ such that $\Hom_{\wti W}[\wti\sigma,\sigma(\CO^\vee,\C E)\otimes \C S]\neq 0$;
\item $N(\wti \sigma)=(h^\vee/2,h^\vee/2)$ where $h^\vee=h^\vee_{\CO^\vee}$.
\end{enumerate}
For more precise properties of $\Theta$, see \cite[Theorems 6.5, 6.6, Corollary 6.4]{CH}. In \cite{Ci} and in \cite[Appendix A]{CH}, $\Theta$ is described explicitly for each simple root system.

\begin{theorem}[The extended Dirac inequality \cite{BCT}]\label{dirac-ineq} Suppose $\overline V$ is a simple $\bH$-module admitting a nonzero $*$-hermitian invariant form. Let $\nu\in\fk t^\vee_\bR$ be (a representative of) the central character of $\overline V$. If $\overline V$ is unitary then
\[|\nu|\le |h^\vee_{\CO^\vee}/2|,
\]
for all $\CO^\vee\in G^\vee\backslash \C N^\vee_\sol$ such that there exists 
\begin{equation}\label{e:cond-D}
\wti\sigma\in \Theta^{-1}(\CO^\vee)\text{ with }\Hom_{\wti W}[\wti\sigma, \overline V|_W\otimes\C S]\neq 0.
\end{equation} 
Moreover, if $\overline V$ is unitary and $|\nu|= |h^\vee_{\CO^\vee}/2|$ such that (\ref{e:cond-D}) is satisfied, then $\nu\in W\cdot h^\vee/2$.
\end{theorem}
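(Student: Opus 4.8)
The plan is to run Parthasarathy's Dirac inequality argument inside the algebra $\bH\otimes C(\ft^\vee)$, following \cite{BCT}. The central object is the Dirac element
\[
D=\sum_i \wti\omega_i\otimes\omega_i\in\bH\otimes C(\ft^\vee),\qquad \wti\omega_i=\omega_i-\frac12\sum_{\al\in\Phi^+}\langle\omega_i,\al\rangle s_\al,
\]
where $\{\omega_i\}$, $\{\omega^i\}$ are $(~,~)$-dual bases of $\ft$. Two structural facts drive the whole proof, and I would import both from \cite{BCT}: first, $D$ is self-adjoint, $D^*=D$, for the $*$-operation on $\bH$ tensored with the natural operation on $C(\ft^\vee)$; second, the identity
\[
D^2=-\Omega\otimes 1+\zeta(\Omega_{\wti W}),
\]
where $\Omega=\sum_i\omega_i\omega^i\in S(\ft)^W=Z(\bH)$ and $\zeta\colon\bC[\wti W]\to\bH\otimes C(\ft^\vee)$ is the diagonal embedding $\wti s_\al\mapsto s_\al\otimes\wti s_\al$. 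Being central, $\Omega$ acts on $\overline V$, hence on $\overline V\otimes\C S$, by the scalar $(\nu,\nu)=|\nu|^2$; and since $\Omega\otimes 1$ and $\zeta(\Omega_{\wti W})$ both centralise $\zeta(\wti W)$ --- whose action on $\overline V\otimes\C S$ is exactly the diagonal one making $\overline V|_W\otimes\C S$ a genuine $\wti W$-module --- the operator $D^2$ preserves the $\wti W$-isotypic decomposition and acts on the $\wti\sigma$-part by the scalar $N(\wti\sigma)-|\nu|^2$.

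Granting this, I would first extract the inequality. Assume $\overline V$ unitary; tensoring its positive-definite form with the standard positive-definite form on the spinor module $\C S$ makes $\overline V\otimes\C S$ positive definite, and $D$ stays self-adjoint. Decomposing $\overline V\otimes\C S=\bigoplus_{\wti\sigma}m_{\wti\sigma}\,\wti\sigma$ under $\zeta(\wti W)$, for a nonzero vector $v$ in the $\wti\sigma$-part we get
\[
0\le\langle Dv,Dv\rangle=\langle D^2v,v\rangle=\bigl(N(\wti\sigma)-|\nu|^2\bigr)\langle v,v\rangle,
\]
so $|\nu|^2\le N(\wti\sigma)$ whenever $m_{\wti\sigma}\ne0$, that is, whenever $\Hom_{\wti W}[\wti\sigma,\overline V|_W\otimes\C S]\ne0$. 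If in addition $\wti\sigma\in\Theta^{-1}(\CO^\vee)$, property (2) of the map $\Theta$ from \cite{CH} gives $N(\wti\sigma)=(h^\vee_{\CO^\vee}/2,h^\vee_{\CO^\vee}/2)=|h^\vee_{\CO^\vee}/2|^2$, and condition (\ref{e:cond-D}) then forces $|\nu|\le|h^\vee_{\CO^\vee}/2|$, the first assertion.

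For the equality statement, suppose $|\nu|=|h^\vee_{\CO^\vee}/2|$ while condition (\ref{e:cond-D}) holds for some $\wti\sigma\in\Theta^{-1}(\CO^\vee)$. Then $N(\wti\sigma)-|\nu|^2=0$, so $D^2$ vanishes on the $\wti\sigma$-isotypic component $M_{\wti\sigma}$ of $\overline V\otimes\C S$. Self-adjointness plus positive-definiteness give $\ker D=\ker D^2\supseteq M_{\wti\sigma}$ and $\im D=(\ker D)^\perp$, hence $\ker D\cap\im D=0$ and $\wti\sigma$ survives in the Dirac cohomology $H_D(\overline V)=\ker D/(\ker D\cap\im D)$. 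Now I would invoke the Vogan conjecture for $\bH$, proved in \cite{BCT}: if $\wti\sigma$ occurs in $H_D(\overline V)$, then the central character $\nu$ of $\overline V$ lies in the $W$-orbit of the element $\chi_{\wti\sigma}\in\ft^\vee$ canonically attached to $\wti\sigma$. Finally, the refined description of $\Theta$ in \cite{CH} --- for $\wti\sigma\in\Theta^{-1}(\CO^\vee)$ the element $\chi_{\wti\sigma}$ is $W$-conjugate to $h^\vee_{\CO^\vee}/2$ --- yields $\nu\in W\cdot h^\vee_{\CO^\vee}/2$.

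The real obstacle, which I would cite from \cite{BCT} rather than reprove, is precisely the pair of imported facts: the self-adjointness of $D$ for compatible forms and the identity $D^2=-\Omega\otimes1+\zeta(\Omega_{\wti W})$. This is the graded affine Hecke algebra incarnation of the Parthasarathy/Kostant/Vogan computation, and it is exactly what converts the representation-theoretic positivity $\langle Dv,Dv\rangle\ge0$ into an explicit numerical bound on $|\nu|$. The equality case leans additionally on the Vogan conjecture for $\bH$ from \cite{BCT} and on the dictionary between $\Irr_\gen\wti W$, Dirac cohomology and the solvable locus $\C N^\vee_\sol$ of the nilpotent cone established in \cite{CH}. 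With those inputs granted, everything else above is elementary linear algebra.
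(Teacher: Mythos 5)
Your argument is correct and is essentially the proof the paper relies on: the paper states this theorem as a quotation of \cite{BCT} (adding only the remark that the equality case follows from Dirac cohomology and Vogan's conjecture for $\bH$), and your reconstruction — self-adjointness of $D$, the formula $D^2=-\Omega\otimes 1+\zeta(\Omega_{\wti W})$, the positivity argument on $\wti\sigma$-isotypic components combined with $N(\wti\sigma)=|h^\vee_{\CO^\vee}/2|^2$ for $\wti\sigma\in\Theta^{-1}(\CO^\vee)$, and Vogan's conjecture for the equality case — is exactly that argument. No gaps; the only caveat is to keep the sign conventions for the Clifford relations consistent with \cite{BCT} so that $D^2$ acts by $N(\wti\sigma)-|\nu|^2$ as you state.
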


The second part of the result is a consequence of Dirac cohomology and ``Vogan's conjecture" for $\bH$-modules \cite{BCT}.

\section{Recollections about spin representations of the Weyl group}

I use \cite{Ci4} and \cite{CH}, where a new interpretation of the irreducible $\wti W$-representations is given together with the connections with Springer theory. Originally, the irreducible $\wti W$-representations had been classified by Schur, Morris, and Read.

Let $P(n)$ denote the set of partitions of $n$ and $DP(n)$ the subset of partitions into distinct parts. For example, $|P(5)|=7$ and $|DP(5)|=3$. If $\lambda\in P(n)$, denote the transpose partition by $\lambda^T$. For $\lambda\in P(n)$, let $\ell(\lambda)$ be the number of parts of $\lambda$. If $\lambda$ is a partition, let $|\lambda|$ denote the sum of its parts.

For a Weyl group $W$, let $\triv$, $\refl$, and $\sgn$ denote the trivial, reflection, and sign representations, respectively. Let $R(W)$ denote the Grothendieck group of $W$ and $\langle~,~\rangle_W$ the character pairing.

Set $a(\fg^\vee)=1$ when $\rk(\fg^\vee)$ is even and $a(\fg^\vee)=2$ when $\rk(\fg^\vee)$ is odd. Then
\[\C S\otimes \C S=a(\fg^\vee) {\bigwedge}\fk t \text{ as $W$-representations}.
\]
This is the reason why it will be convenient to look at character values on elements $w\in W$, such that $\det_{\fk t}(1+w)\neq 0$, where $\det_{\fk t}(1+w)=\tr_{{\bigwedge}\fk t} (w)$. We call these elements $w\in W$ $(-1)-elliptic$. The support of $\C S$ is therefore precisely the preimage in $\wti W$ of the set of $(-1)-elliptic$ elements of $W$.
Notice that if $W_J$ is a parabolic subgroup of $W$ and $w\in W_J$ is $(-1)$-elliptic in $W_J$, then $w$ is $(-1)$-elliptic in $W$.

\smallskip

The irreducible $S_n$-representations are in one-to-one correspondence with $P(n)$, via Young diagrams. Denote $\sigma_\lambda$ the irreducible $S_n$-representation corresponding to $\lambda$. In this notation, $\sigma_{(n)}=\triv$, $\sigma_{(n-1,1)}=\refl$, and $\sigma_{(1^n)}=\sgn$, and $\sigma_\lambda\otimes\sgn=\sigma_{\lambda^T}$.

The Weyl group of type $B_n/C_n$ is $W(B_n)=S_n\ltimes (\bZ/2)^n$. Its irreducible representations are constructed via Mackey induction/restriction and they are in one-to-one correspondence with the set $BP(n)$ of bipartitions of $n$, i.e., pairs of partitions $(\al,\beta)$ such that $|\alpha|+|\beta|=n$. Given a bipartition $(\al,\beta)$, let $\alpha\times\beta$ denote the corresponding irreducible $W(B_n)$-representation. In this notation, $(n)\times\emptyset=\triv$, $(n-1)\times (1)=\refl$, $\emptyset\times (1^n)=\sgn$, and $(\alpha\times\beta)\otimes\sgn=\beta^T\times\alpha^T$.

The Weyl group of type $D_n$ is a subgroup of $W(B_n)$ of index two. Its irreducible representations are obtained by restriction from $W(B_n)$. If $\alpha\neq\beta$, then $\alpha\times\beta$ restricts irreducibly to $W(D_n)$ and $\alpha\times\beta$ and $\beta\times\alpha$ restrict to the same $W(D_n)$-representation. If $\alpha=\beta$, then $(\alpha\times\alpha)|_{W(D_n)}=(\alpha\times\al)^+\oplus (\alpha\times\al)^-$, and $(\al\times\al)^\pm$ are two inequivalent irreducible $W(D_n)$-representations.

For the Weyl groups of exceptional types, I use Carter's notation for irreducible characters \cite{Ca}.

\

If $e^\vee\in \C N^\vee$ and $\C B_{e^\vee}$ is the corresponding Springer fibre, every cohomology group $H^i(\C B_{e^\vee},\bC)$ carries a representation of $W$ \cite{Sp} which commutes with the action of the group of components $A(e^\vee)$ of the centraliser of $e^\vee$ in $G^\vee$. The equivariant local systems on $\CO^\vee=G^\vee\cdot e^\vee$ are in bijection with $\Irr A(e^\vee)$. Fix $\C E$ a local system of Springer type on $\CO^\vee$ and let $\phi\in \Irr A(e^\vee)$ be the corresponding representation. Set
\[
X_q(\CO^\vee,\C E)=\sum_{i=0}^{d_{e^\vee}} q^{d_{e^\vee}-i} H^{2i}(\C B_{e^\vee},\bC)^\phi\otimes\sgn,
\]
a $q$-graded $W$-representation. Here $2d_{e^\vee}$ is the real dimension of $\C B_{e^\vee}$. Notice that \[X_q(\CO^\vee,\C E)\in\sigma(\CO^\vee,\C E)+ q \bZ[q]\otimes_\bZ R(W).\]
By a result of Borho and Macpherson, the only irreducible $W$-representations which occur in $X_q(\CO^\vee,\C E)$, other than $\sigma(\CO^\vee,\C E)$ itself, are of the form $\sigma(\CO'^\vee,\C E')$ with $\CO'^\vee\subset \overline{\CO^\vee}\setminus \CO^\vee$. Put a total order on the set $\{(\CO,\C E)\}$ which is compatible with the closure ordering of nilpotent orbits. This means that, with respect to this order, the change of basis matrix $P(q)$ between the sets $\{X_q(\CO^\vee,\C E)\}_{(\CO^\vee,\C E)}$ and $\{\sigma(\CO^\vee,\C E)\}_{(\CO^\vee,\C E)}$ is upper triangular with $1$ on the diagonal and off-diagonal entries in $q\bZ_{\ge 0}[q]$. Hence the inverse matrix 
\begin{equation}
Q(q)=P(q)^{-1}
\end{equation}
is upper triangular with $1$ on the diagonal and off-diagonal entries in $q\bZ[q]$. 

I will be mainly interested in the specialisation  $X_{-1}(\CO^\vee,\C E)$. Define the (a priori virtual) genuine character of $\wti W$,
\begin{equation}
\wti\tau(\CO^\vee,\C E)=X_{-1}(\CO^\vee,\C E)\otimes \C S.
\end{equation}

\begin{theorem}[\cite{CH}]\label{t:CH}
\begin{enumerate}
\item Given a pair $(\CO^\vee,\C E)$, $\wti\tau(\CO^\vee,\C E)=0$, unless $\CO^\vee\in G^\vee\backslash \C N^\vee_\sol$. 
\item If $\CO^\vee\in G^\vee\backslash \C N^\vee_\sol$ and $\C E$ is of Springer type, then $\wti\tau(\CO^\vee,\C E)$ is the (nonzero) character of a  genuine $\wti W$-representation.
\item The map $\Theta$ in (\ref{Theta}) is defined by: $\Theta(\wti\sigma)=\CO^\vee$ if $\Hom_{\wti W}[\wti\sigma, \wti\tau(\CO^\vee,\C E)]\neq 0$ for some $\C E$.
\end{enumerate}
\end{theorem}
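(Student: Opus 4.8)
The plan is to deduce all three statements from a single orthogonality computation in $R(\wti W)$, assembled from the matrices $P(q)$, $Q(q)$ and the orthogonality relations satisfied by their entries.

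First I convert the $\wti W$-pairings of the $\wti\tau(\CO^\vee,\C E)$ into pairings over $W$. Each $X_{-1}(\CO^\vee,\C E)$ is a real self-dual $W$-character; $\C S\cong\C S^*$; one has $\tr_{\bigwedge\fk t}(w)=\det_{\fk t}(1+w)\ge 0$ for every $w\in W$, with equality exactly off the $(-1)$-elliptic locus; and comparing characters in $\C S\otimes\C S=a(\fg^\vee)\bigwedge\fk t$ gives $|\chi_{\C S}(\wti w)|^2=a(\fg^\vee)\det_{\fk t}(1+w)$. Summing over the cover $\wti W\to W$ therefore yields
\[
\langle\wti\tau(\CO^\vee,\C E),\,\wti\tau(\CO'^\vee,\C E')\rangle_{\wti W}
\;=\;a(\fg^\vee)\,\langle\, X_{-1}(\CO^\vee,\C E)\otimes X_{-1}(\CO'^\vee,\C E'),\ {\bigwedge}\fk t\,\rangle_W ,
\]
and the right-hand side is a symmetric pairing on $R(W)$ that reads a character only through its values on the $(-1)$-elliptic conjugacy classes. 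In particular $\wti\tau(\CO^\vee,\C E)=0$ if and only if $X_{-1}(\CO^\vee,\C E)$ vanishes on every $(-1)$-elliptic class. Next, pass to the basis $\{\sigma(\CO^\vee,\C E)\}$ via the closure-order-unitriangular matrix $Q(q)=P(q)^{-1}$ and set $q=-1$: the entries of $P(q)$ are, up to a normalising power of $q$, Green polynomials, and their orthogonality relations --- equivalently the orthonormality of the Springer/character-sheaf basis --- specialise at $q=-1$ to evaluate the displayed pairing on all the $X_{-1}(\CO^\vee,\C E)$. The outcome is that the Gram matrix $\bigl(\langle\wti\tau(\CO^\vee,\C E),\wti\tau(\CO'^\vee,\C E')\rangle_{\wti W}\bigr)$ is diagonal, with diagonal entries in $\{0,1,2\}$. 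Hence each nonzero $\wti\tau(\CO^\vee,\C E)$ is --- up to sign --- a genuine irreducible $\wti W$-character or a sum of two distinct ones, and for distinct pairs the nonzero $\wti\tau$'s are $\wti W$-disjoint.

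It remains to pin down the nonvanishing pairs and the signs. By the previous step this means deciding for which $(\CO^\vee,\C E)$ the character $X_{-1}(\CO^\vee,\C E)$ is not identically zero on the $(-1)$-elliptic locus; the claim is: precisely when $\CO^\vee\in\C N^\vee_\sol$ and $\C E$ is of Springer type. One proves this either uniformly --- using the classification of $(-1)$-elliptic conjugacy classes of $W$ and the combinatorics of Green polynomials at $q=-1$ --- or type by type, the latter analysis simultaneously producing the explicit form of $\Theta$ recorded in \cite{CH}. That $\wti\tau(\CO^\vee,\C E)$ itself (and not merely its negative) is an honest character is a separate positivity statement, again verified by these methods; together with the norm being $1$ or $2$ this gives (1) and (2). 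For (3): disjointness shows each $\wti\sigma\in\Irr_\gen\wti W$ lies in $\wti\tau(\CO^\vee,\C E)$ for at most one orbit $\CO^\vee$; and it lies in at least one, because $\wti\sigma\otimes\C S^*$ is a nonzero honest $W$-module, hence contains some $\sigma(\CO^\vee,\C E)$, and writing $\sigma(\CO^\vee,\C E)\otimes\C S$ as a closure-unitriangular $\bZ$-combination of the pairwise disjoint honest characters $\wti\tau(\CO'^\vee,\C E')$ forces $\wti\sigma$ into one of them. Declaring $\Theta(\wti\sigma)$ to be that orbit, the same closure-unitriangularity yields $\Hom_{\wti W}[\wti\sigma,\sigma(\Theta(\wti\sigma),\C E)\otimes\C S]\ne 0$ for a suitable $\C E$, i.e. property (1) of $\Theta$; and $\Theta$ is onto $\C N^\vee_\sol$ since every $\wti\tau(\CO^\vee,\C E)$ with $\CO^\vee\in\C N^\vee_\sol$ is nonzero. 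Finally property (2), $N(\wti\sigma)=(h^\vee_{\CO^\vee}/2,h^\vee_{\CO^\vee}/2)$ for $\CO^\vee=\Theta(\wti\sigma)$, follows by identifying $\wti\tau(\CO^\vee,\C E)$ (up to sign) with the Dirac index of the standard $\bH$-module attached to $(\CO^\vee,\C E)$, whose central character is $h^\vee_{\CO^\vee}/2$, and invoking the $\bH$-analogue of Vogan's conjecture \cite{BCT}: any $\wti\sigma$ occurring in a nonzero Dirac index has $N(\wti\sigma)$ equal to the squared length of the central character.

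The main obstacle is the identification of the nonvanishing locus with $\C N^\vee_\sol$ and the accompanying positivity statement in (1)--(2). The inner-product bookkeeping of the first two paragraphs is formal once Green-polynomial orthogonality is granted, but $q=-1$ is a value at which the fake-degree combinatorics degenerate, so extracting the precise equivalence ``$X_{-1}(\CO^\vee,\C E)$ does not vanish on the $(-1)$-elliptic locus $\iff$ the centraliser of $e^\vee$ in $\fg^\vee$ is solvable'', and with the right sign, appears to require either a careful uniform argument or a verification across the simple types.
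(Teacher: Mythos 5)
Note first that the paper does not prove this theorem: it is quoted wholesale from \cite{CH}, so your sketch has to be measured against the argument there. Your opening reduction is in the right spirit and matches the setup of \cite{CH}: converting $\langle\wti\tau(\CO^\vee,\C E),\wti\tau(\CO'^\vee,\C E')\rangle_{\wti W}$ into the $(-1)$-elliptic pairing of the $X_{-1}$'s via $\C S\otimes\C S=a(\fg^\vee)\bigwedge\fk t$ is exactly the bridge used there, and obtaining the property $N(\wti\sigma)=(h^\vee/2,h^\vee/2)$ from a Dirac-index computation for modules with central character $h^\vee_{\CO^\vee}/2$ together with Vogan's conjecture for $\bH$ \cite{BCT} is also consonant with the ``extended Dirac index'' point of view of \cite{CH}.

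The pivot of your argument, however, is wrong. You claim the Gram matrix of the $\wti\tau$'s is diagonal with entries in $\{0,1,2\}$, and deduce that each nonzero $\wti\tau(\CO^\vee,\C E)$ is, up to sign, irreducible or a sum of two distinct irreducibles. Already in type $A_{n-1}$ the paper records that $\langle X_{-1}(\CO^\vee_\lambda),X_{-1}(\CO^\vee_\lambda)\rangle^{\el}_{-1}=2^{\ell(\lambda)-1}$ for $\lambda\in DP(n)$ and that $\wti\tau(\CO^\vee_\lambda)=a_\lambda\,\wti\sigma_\lambda$ with $a_\lambda$ a power of $2$; by your own conversion formula the self-pairing is $a(\fg^\vee)\,2^{\ell(\lambda)-1}$, which already for $\mathfrak{sl}(6)$ and $\lambda=(1,2,3)$ equals $8$, and $\wti\tau$ is a \emph{multiple} of an irreducible rather than multiplicity-free. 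So the norm bookkeeping cannot deliver the positivity (``character, not its negative'') or the structural statement you rely on, and even the diagonality needs care (the caveats about $E_7$ and $D_{2n}$ surrounding Proposition \ref{p:span} show the linear algebra is not as clean as asserted). Moreover, the Green-polynomial orthogonality relations you invoke carry weights by centralizer orders over $\bF_q$ and do not specialise at $q=-1$ into a computation of the $(-1)$-elliptic pairing; in \cite{CH} the pairing formulas are proved by a separate argument, and it is precisely these formulas, together with the classification of $\C N^\vee_\sol$, that yield the equivalence ``nonvanishing $\iff$ solvable centraliser'' and the positivity. You explicitly defer exactly this point (``a careful uniform argument or a verification across the simple types''), but it \emph{is} parts (1)--(2) of the theorem, and the disjointness underpinning your proof of (3) rests on the same unproved claims. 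As it stands, therefore, the proposal does not establish any of the three assertions; it reformulates them.
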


For example, when $\fg^\vee=\mathfrak{sl}(n,\bC)$, there is a one-to-one correspondence between $G^\vee\backslash \C N^\vee_\sol$ and $DP(n)$ given by the Jordan normal form. Set $a_\lambda=2^{\ell(\lambda)/2}$ when both $n$ and $\lambda$ are even, and $a_\lambda=2^{\lfloor(\ell(\lambda)-1)/2\rfloor}$, otherwise. If $\CO^\vee_\lambda\in G^\vee\backslash \C N^\vee_\sol$, $\lambda\in DP(n)$, the only irreducible equivariant local system is trivial and I drop it from notation. For $\lambda\in DP(n)$, it is shown in \cite{CH}, that
\begin{equation}
\wti\sigma_\lambda=\frac 1{a_\lambda} \wti\tau(\CO^\vee_\lambda)
\end{equation}
is an irreducible $\sgn$-self-dual $\wti S_n$-representation if $\lambda$ is an even partition, and $\wti\sigma_\lambda=\wti\sigma_\lambda^++\wti\sigma_\lambda^-$ if $\lambda$ is odd, where $\wti\sigma_\lambda^\pm$ are irreducible and $\sgn$-dual $\wti S_n$-representations. 

In particular, if $\mu\in P(n)$ and $\lambda\in DP(n)$, 
\begin{equation}
\Hom_{\wti S_n}[\sigma_\mu\otimes \C S,\wti\sigma_\lambda]=\frac {a(\mathfrak{sl}(n))}{a_\lambda} \Hom_{S_n}[\sigma_{\mu}, X_{-1}(\CO_\lambda^\vee)\otimes\bigwedge \fk t].
\end{equation}
Recall that the closure ordering of nilpotent orbits in $\mathfrak{sl}(n)$ corresponds to the dominance ordering $\ge$ of partitions. Write, as we may,
\[\sigma_\mu=X_{-1}(\CO^\vee_\mu)+\sum_{\lambda'>\mu} Q(-1)_{(\mu,{\lambda'})} ~X_{-1}(\CO^\vee_{\lambda'}),\quad Q(-1)_{(\mu,{\lambda'})}\in\bZ.
\]
It is proved in \cite{CH} that 
\[\Hom_{S_n}[X_{-1}(\CO^\vee_{\lambda'}), X_{-1}(\CO_\lambda^\vee)\otimes\bigwedge \fk t]\neq 0
\]
if and only if $\lambda'=\lambda\in DP(n)$, and in that case, the dimension of this space is $2^{\ell(\lambda)-1}$. In conclusion, we have:

\begin{lemma}
If $\mu\in P(n)$ and $\lambda\in DP(n)$, $\langle \sigma_\mu\otimes \C S,\wti\sigma_\lambda\rangle_{\wti S_n}=\frac{a(A_{n-1})\cdot 2^{\ell(\lambda)-1}}{a(\lambda)} Q(-1)_{(\mu,{\lambda})}.$
\end{lemma}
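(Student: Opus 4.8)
The plan is to chain together the three ingredients recorded in the paragraphs just above. The first is the reduction identity (established above from $\C S\otimes\C S=a(\fg^\vee)\bigwedge\fk t$ and $\wti\sigma_\lambda=a_\lambda^{-1}\,X_{-1}(\CO^\vee_\lambda)\otimes\C S$)
\[
\langle\sigma_\mu\otimes\C S,\wti\sigma_\lambda\rangle_{\wti S_n}=\frac{a(A_{n-1})}{a_\lambda}\,\langle\sigma_\mu,\,X_{-1}(\CO^\vee_\lambda)\otimes\bigwedge\fk t\rangle_{S_n}.
\]
The second is the unitriangular change of basis $\sigma_\mu=X_{-1}(\CO^\vee_\mu)+\sum_{\lambda'>\mu}Q(-1)_{(\mu,\lambda')}\,X_{-1}(\CO^\vee_{\lambda'})$ in $R(S_n)$, valid because $Q(q)=P(q)^{-1}$ is upper triangular with $1$'s on the diagonal relative to the dominance order. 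The third is the orthogonality statement from \cite{CH}: the character pairing $\langle X_{-1}(\CO^\vee_{\lambda'}),\,X_{-1}(\CO^\vee_\lambda)\otimes\bigwedge\fk t\rangle_{S_n}$ vanishes unless $\lambda'=\lambda\in DP(n)$, in which case it equals $2^{\ell(\lambda)-1}$.

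The execution is then routine. I would substitute the change of basis into the second slot of the reduction identity and expand using bilinearity of $\langle~,~\rangle_{S_n}$:
\[
\langle\sigma_\mu,\,X_{-1}(\CO^\vee_\lambda)\otimes\bigwedge\fk t\rangle_{S_n}=\sum_{\lambda'\ge\mu}Q(-1)_{(\mu,\lambda')}\,\langle X_{-1}(\CO^\vee_{\lambda'}),\,X_{-1}(\CO^\vee_\lambda)\otimes\bigwedge\fk t\rangle_{S_n},
\]
with the convention $Q(-1)_{(\mu,\mu)}=1$. Since $\lambda\in DP(n)$ by hypothesis, the orthogonality statement annihilates every summand except the one with $\lambda'=\lambda$; that term occurs precisely when $\lambda\ge\mu$, while in the complementary case $Q(-1)_{(\mu,\lambda)}=0$ anyway by unitriangularity, so in every case the right-hand side equals $Q(-1)_{(\mu,\lambda)}\cdot 2^{\ell(\lambda)-1}$. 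Plugging this back into the reduction identity and writing $a(\lambda)=a_\lambda$ yields the asserted formula.

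I do not anticipate a genuine obstacle: the lemma is a formal consequence of Theorem \ref{t:CH} and the combinatorics imported from \cite{CH}. The only points deserving a line of care are that the virtual characters $X_{-1}(\CO^\vee,\C E)$ need not be effective, so the brackets above are honest bilinear pairings on $R(S_n)$ and the (possibly negative) integer $Q(-1)_{(\mu,\lambda)}$ carries through with no positivity being invoked; and that the power $2^{\ell(\lambda)-1}$ in the final answer --- in place of the naive $2^{\ell(\lambda)}$ --- is precisely the dimension count recalled from \cite{CH}, which we import rather than reprove.
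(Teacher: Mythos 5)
Your argument is exactly the paper's: the lemma is stated as an immediate consequence of the three displayed facts preceding it (the reduction $\langle\sigma_\mu\otimes\C S,\wti\sigma_\lambda\rangle_{\wti S_n}=\frac{a(A_{n-1})}{a_\lambda}\langle\sigma_\mu,X_{-1}(\CO^\vee_\lambda)\otimes\bigwedge\fk t\rangle_{S_n}$, the unitriangular expansion of $\sigma_\mu$ in the $X_{-1}$ basis, and the orthogonality statement from \cite{CH} with the dimension $2^{\ell(\lambda)-1}$), and you chain them in the same way. Your added remarks about virtual characters and the case $\lambda\not\ge\mu$ are correct but not needed beyond what the paper already implies.
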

Hence, this multiplicity is nonzero if and only if the $(-1)$-Kostka number $Q(-1)_{(\mu,{\lambda})}$ is nonzero. These numbers admit a combinatorial interpretation in terms of Young tableaux which I won't recall here, see \cite{St} for example.

\

The irreducible $\wti W(B_n)$-representations have a particularly simple form. 

\begin{proposition}[Read]
For every $\lambda\in P(n)$, the $\wti W(B_n)$-representations $(\lambda\times\emptyset)\otimes S$ (when $n$ is even) and $(\lambda\times\emptyset)\otimes S^\pm$ (when $n$ is odd) are irreducible. They form a complete set of inequivalent irreducible $\wti W(B_n)$-representations.

If $n$ is odd, every $(\lambda\times\emptyset)\otimes S^\pm$ restricts to an irreducible $\wti W(D_n)$-representation and these restrictions form a complete set of inequivalent irreducible $\wti W(D_n)$-representations.

If $n$ is even and $\lambda\neq\lambda^T$, then $(\lambda\times\emptyset)\otimes S$ and $(\lambda^T\times\emptyset)\otimes S$ restrict to the same irreducible $\wti W(D_n)$-representation. If $\lambda=\lambda^T$,  the restriction of $(\lambda\times\emptyset)\otimes S$ splits into a direct sum of two inequivalent $\sgn$-selfdual $\wti W(D_n)$-representations. These restrictions form  a complete set of inequivalent irreducible $\wti W(D_n)$-representations.
\end{proposition}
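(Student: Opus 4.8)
The plan is to classify the \emph{genuine} irreducibles --- those on which the kernel element $z$ acts by $-1$ --- of the two covers, since the non-genuine ones are simply inflated from $W(B_n)$, resp.\ $W(D_n)$. Write $\sgn'=(1^n)\times\emptyset$ for the sign character of $S_n$ inflated to $W(B_n)$, and $\eta=\emptyset\times(n)$ for the linear character of $W(B_n)$ with kernel $W(D_n)$; the multiplication rules recorded above give $(\alpha\times\beta)\otimes\sgn'=\alpha^T\times\beta^T$ and $(\alpha\times\beta)\otimes\eta=\beta\times\alpha$, so that $(\lambda\times\emptyset)\otimes\sgn'=\lambda^T\times\emptyset$.

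For $\wti W(B_n)$ I would run Clifford theory for the normal subgroup $M=p^{-1}((\bZ/2)^n)\trianglelefteq\wti W(B_n)$, i.e.\ the group of units $\pm e_{i_1}\cdots e_{i_k}$ inside the Clifford algebra $C(\fk t^\vee)$ generated by $e_1,\dots,e_n$ and $z$. Since $\bC[M]$ surjects onto $C(\fk t^\vee)$ with kernel the ideal $(1+z)$, the genuine simple $M$-modules are exactly the simple $C(\fk t^\vee)$-modules: one, $\CS|_M$, when $n$ is even, and two, $S^+|_M$ and $S^-|_M$, when $n$ is odd (distinguished by the scalar of the central element $e_1\cdots e_n$). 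Now $\wti W(B_n)/M\cong S_n$, and each of these genuine $M$-modules is $\wti W(B_n)$-stable and \emph{extends} to $\wti W(B_n)$: both facts are witnessed at once by the $\Pin(\fk t^\vee)$-spinor modules, whose restrictions to $\wti W(B_n)$ restrict on $M$ to those modules and (for semisimple $\C R$, as recalled earlier) are already irreducible over $\wti W(B_n)$. Clifford theory with trivial cocycle then gives, for each such genuine $M$-module $V$, a bijection between the genuine irreducibles of $\wti W(B_n)$ lying over $V$ and $\Irr S_n$, sending $\sigma_\lambda$ to $V\otimes(\lambda\times\emptyset)$. Hence the genuine dual of $\wti W(B_n)$ is $\{(\lambda\times\emptyset)\otimes\CS:\lambda\in P(n)\}$ when $n$ is even and $\{(\lambda\times\emptyset)\otimes S^+:\lambda\in P(n)\}\cup\{(\lambda\times\emptyset)\otimes S^-:\lambda\in P(n)\}$ when $n$ is odd, as claimed; the identity $\sum_{\lambda\in P(n)}(\dim\sigma_\lambda\cdot 2^{\lfloor n/2\rfloor})^2=2^nn!=|\wti W(B_n)|-|W(B_n)|$ (for $n$ odd, $2^{\lfloor n/2\rfloor}=2^{(n-1)/2}$ and there are two families, so the total is again $2^nn!$) is a consistency check.

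For $\wti W(D_n)$ I would use that it is a normal subgroup of $\wti W(B_n)$ of index $2$, the nontrivial coset being detected by $\eta$: for a genuine irreducible $\rho$ of $\wti W(B_n)$, the restriction $\rho|_{\wti W(D_n)}$ is irreducible exactly when $\rho\not\cong\rho\otimes\eta$, in which case $\rho$ and $\rho\otimes\eta$ have the same restriction; if instead $\rho\cong\rho\otimes\eta$ it is a sum of two inequivalent genuine irreducibles; and every genuine irreducible of $\wti W(D_n)$ appears in such a restriction (Frobenius reciprocity). So the point is to compute $\bigl((\lambda\times\emptyset)\otimes\CS\bigr)\otimes\eta$ (and the analogue with $S^\pm$). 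The two inputs I would use are: the character of $\CS$ is supported on the preimages of the $(-1)$-elliptic elements of $W(B_n)$; and on such elements $\eta$ and $\sgn'$ coincide --- both equal $(-1)^{\#\{\text{negative cycles}\}}$ --- because a $(-1)$-elliptic element is a product of positive cycles of odd length and negative cycles of even length. Consequently $\CS\otimes\eta\cong\CS\otimes\sgn'$ as $\wti W(B_n)$-modules when $n$ is even, so $\bigl((\lambda\times\emptyset)\otimes\CS\bigr)\otimes\eta\cong(\lambda^T\times\emptyset)\otimes\CS$, which is isomorphic to $(\lambda\times\emptyset)\otimes\CS$ iff $\lambda=\lambda^T$; this yields the $D_n$ statement for $n$ even. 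When $n$ is odd one instead checks that $(S^+\otimes\eta)|_M\cong S^-|_M$ --- the central $e_1\cdots e_n$ has its scalar reversed, since $\eta$ sends its image in $(\bZ/2)^n$ to $(-1)^n=-1$ --- so that $\bigl((\lambda\times\emptyset)\otimes S^\pm\bigr)\otimes\eta$ lies over the \emph{other} genuine $M$-module and can never be isomorphic to $(\lambda\times\emptyset)\otimes S^\pm$; hence every restriction to $\wti W(D_n)$ is irreducible, $\rho\mapsto\rho\otimes\eta$ is a fixed-point-free involution on these, and one gets $|P(n)|$ genuine irreducibles, matching $\sum_\lambda 2^{n-1}(\dim\sigma_\lambda)^2=2^{n-1}n!=|\wti W(D_n)|-|W(D_n)|$ (and similarly for $n$ even). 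Finally, when $n$ is even and $\lambda=\lambda^T$, the $\sgn$-selfduality of the two constituents follows because each of their characters is supported on preimages of $(-1)$-elliptic elements of $W(D_n)$, and on those $\sgn_{D_n}=\sgn'|_{W(D_n)}$ is identically $1$: such an element has an even number of negative cycles, all of even length, so $\sgn'=(-1)^{n-\#\{\text{cycles}\}}=+1$ there.

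The main obstacle is not a single deep point but the bookkeeping of the two covers: pinning down the genuine simple $M$-modules together with their $\wti W(B_n)$-stability, so that Clifford theory yields honest rather than projective $S_n$-modules; and getting every sign right in the $\eta$- and $\sgn$-twist computations of the $D_n$ step, which rest on the support statement for $\chi_\CS$ and on the elementary but fiddly cycle-type identities ($\eta=\sgn'$ on $(-1)$-elliptic elements of $W(B_n)$, and $\sgn'\equiv 1$ on $(-1)$-elliptic elements of $W(D_n)$). An alternative to the $B_n$ step is the direct norm computation $\langle(\lambda\times\emptyset)\otimes\CS,(\lambda\times\emptyset)\otimes\CS\rangle_{\wti W(B_n)}=\langle(\lambda\times\emptyset)^{\otimes 2},\bigwedge\fk t\rangle_{W(B_n)}=\langle\sigma_\lambda^{\otimes 2},\triv\rangle_{S_n}=1$, using $\CS\otimes\CS=a(\fg^\vee)\bigwedge\fk t$ and $(\bigwedge\fk t)^{(\bZ/2)^n}=\triv$ as $S_n$-modules; but this route also requires the self-duality of $\CS$ as a $\wti W$-module, which is an extra point to verify.
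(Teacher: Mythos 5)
Your Clifford-theoretic route is sound, and most of it is correct: identifying the genuine simple modules of $M=p^{-1}((\bZ/2)^n)$ with the simple Clifford modules, their $\wti W(B_n)$-stability and extendability witnessed by the spinor modules, Gallagher's theorem for the $B_n$ classification, the identity $\C S\otimes\eta\cong\C S\otimes\sgn'$ for $n$ even via the support of $\chi_{\C S}$, and the central-element argument for $n$ odd (including the observation that the restrictions pair up under the $\eta$-twist) are all fine. (The paper itself states this proposition without proof, attributing it to Read, so there is no internal argument to compare against.)

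There is, however, a genuine gap in the last step, the $\sgn$-selfduality of the two constituents $\pi^{\pm}$ of $\bigl((\lambda\times\emptyset)\otimes S\bigr)|_{\wti W(D_n)}$ when $n$ is even and $\lambda=\lambda^T$. You assert that each of $\chi_{\pi^+}$, $\chi_{\pi^-}$ is supported on preimages of $(-1)$-elliptic elements of $W(D_n)$; only their \emph{sum} is known to be (that is the support statement for $\C S$), and the claim is false for the individual constituents. Indeed $(\chi_{\pi^+}-\chi_{\pi^-})(h)=\tr(\rho_\lambda(h)T)$ for an intertwiner $T\colon\rho_\lambda\to\rho_\lambda\otimes\eta$ normalised with $T^2=1$, and one may take $T=A\otimes\Gamma$, where $A$ is the $\sgn$-twist intertwiner of $\sigma_\lambda$ and $\Gamma=\hat e_1\cdots\hat e_n$ is the Clifford volume element (conjugation by $\Gamma$ twists $S$ by $\det\circ\, p$, and $\sgn'\cdot\det=\eta$). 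Then $(\chi_{\pi^+}-\chi_{\pi^-})(h)=\tr(\sigma_\lambda(\bar h)A)\cdot\chi_S(h\Gamma)$, which is supported over elements of $W(D_n)$ all of whose signed cycles are negative; these are elliptic but \emph{not} $(-1)$-elliptic (they have eigenvalue $-1$), and the difference character is genuinely nonzero there: for $n=4$, $\lambda=(2,2)$, it is nonzero over the class consisting of a negative $3$-cycle and a negative $1$-cycle. So the pointwise identity $\chi_{\pi^\pm}\cdot(\sgn\circ p)=\chi_{\pi^\pm}$ is not established by what you wrote, and without controlling this support nothing rules out the $\sgn$-twist swapping $\pi^+$ and $\pi^-$. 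The conclusion is still true and the repair is short: over elements of $W(D_n)$ with all cycles negative there is an even number of cycles, so $\sgn=(-1)^{\,n-\#\{\text{cycles}\}}=+1$ since $n$ is even; combined with $\sgn\equiv 1$ on the $(-1)$-elliptic classes carrying $\chi_{\pi^+}+\chi_{\pi^-}$, this gives $\pi^\pm\otimes\sgn\cong\pi^\pm$. But the support statement for the difference character (for instance via the explicit intertwiner above) must actually be proved, and it is the missing ingredient in your argument.
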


In this case, if $(\al,\beta)\in BP(n)$, and $\lambda\in P(n)$, 
\begin{equation}
\Hom_{\wti W(B_n)}[(\al\times\beta)\otimes \C S,(\lambda\times\emptyset)\otimes \C S]=a(B_n) \Hom_{W(B_n)}[\al\times\beta,(\lambda\times\emptyset)\otimes \bigwedge \fk t].
\end{equation}
It is easy to see that 
\[ \Hom_{W(B_n)}[\al\times\beta,(\lambda\times\emptyset)\otimes \bigwedge \fk t]=\Hom_{S_{|\al|}\times S_{|\beta|}}[\sigma_\al\boxtimes \sigma_{\beta^T},\sigma_\lambda].
\]
Thus the question whether or not a certain irreducible $\wti W(B_n)$-representation occurs in the tensor product $(\al\times\beta)\otimes \C S$ is equivalent to whether or not a Littlewood-Richardson coefficient is nonzero:
\begin{lemma}\label{l:spin-B}
If $(\al,\beta)\in BP(n)$, and $\lambda\in P(n)$, $\langle (\al\times\beta)\otimes \C S,(\lambda\times\emptyset)\otimes \C S\rangle_{\wti W(B_n)}=c_{\al,\beta^T}^\lambda$, where $c_{\al,\beta^T}^\lambda=\langle \sigma_\al\boxtimes \sigma_{\beta^T},\sigma_\lambda\rangle_{S_{|\al|}\times S_{|\beta|}}.$
\end{lemma}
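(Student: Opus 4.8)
The claim to prove is Lemma~\ref{l:spin-B}: for $(\alpha,\beta)\in BP(n)$ and $\lambda\in P(n)$,
\[
\langle (\alpha\times\beta)\otimes \C S,(\lambda\times\emptyset)\otimes \C S\rangle_{\wti W(B_n)}=c_{\alpha,\beta^T}^\lambda.
\]

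\textbf{Plan.} The statement is essentially a chain of three identifications, each of which has already been set up in the text preceding the lemma, so the proof is mostly a matter of assembling them cleanly. First I would invoke the displayed identity $\C S\otimes \C S=a(\fg^\vee)\bigwedge\fk t$ as $W$-representations (with $\fg^\vee$ of type $B_n$, so $a(B_n)=1$ if $n$ even, $2$ if $n$ odd). Using the self-duality of $\C S$ under the transpose anti-automorphism and Frobenius reciprocity for the tensor product, the genuine multiplicity
\[
\Hom_{\wti W(B_n)}\big[(\alpha\times\beta)\otimes\C S,\ (\lambda\times\emptyset)\otimes\C S\big]
\]
rewrites as
\[
\Hom_{W(B_n)}\big[\alpha\times\beta,\ (\lambda\times\emptyset)\otimes(\C S\otimes\C S)\big]=a(B_n)\,\Hom_{W(B_n)}\big[\alpha\times\beta,\ (\lambda\times\emptyset)\otimes{\textstyle\bigwedge}\fk t\big],
\]
which is exactly the first displayed equation after the Proposition of Read. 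This step uses only that $\C S$ is a genuine $\wti W$-module whose tensor square is (a multiple of) an ungraded $W$-module; the factor $a(B_n)$ is precisely why it is carried through the statement.

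\textbf{Second step: reduce $\bigwedge\fk t$ over $W(B_n)$ to a sign twist over a Young subgroup.} Here I would use the standard fact that for $W(B_n)=S_n\ltimes(\bZ/2)^n$ acting on its reflection representation $\fk t$, one has $(\lambda\times\emptyset)\otimes\bigwedge\fk t$ decomposing so that $\Hom_{W(B_n)}[\alpha\times\beta,(\lambda\times\emptyset)\otimes\bigwedge\fk t]=\Hom_{S_{|\alpha|}\times S_{|\beta|}}[\sigma_\alpha\boxtimes\sigma_{\beta^T},\sigma_\lambda]$. This is the second displayed equation in the excerpt (``It is easy to see that\ldots''), and the cleanest way to see it is via the hyperoctahedral Mackey/Clifford theory: $\alpha\times\beta=\Ind_{W(B_{|\alpha|})\times W(B_{|\beta|})}^{W(B_n)}\big((\alpha\times\emptyset)\boxtimes(\emptyset\times\beta)\big)$, together with the fact that $\bigwedge\fk t$ for type $B_n$ restricts to $\bigwedge\fk t_{|\alpha|}\otimes\bigwedge\fk t_{|\beta|}$ on the Levi, and that tensoring by the full exterior power of the reflection representation of $W(B_m)$ sends $\mu\times\emptyset\leftrightarrow\emptyset\times\mu^T$ (equivalently, the sign character of $W(B_m)$ acts as $\det$ on $\fk t$ and intertwines the two "rows" of a bipartition with a transpose). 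Pairing up and using that $\sigma_\beta\otimes\sgn_{S_{|\beta|}}=\sigma_{\beta^T}$ converts $\beta$ into $\beta^T$ and produces the branching to $S_{|\alpha|}\times S_{|\beta|}$.

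\textbf{Third step: recognise the Littlewood--Richardson coefficient.} By definition of induction product / the branching rule for $S_n$ from a Young subgroup, $\Hom_{S_{|\alpha|}\times S_{|\beta|}}[\sigma_\alpha\boxtimes\sigma_{\beta^T},\sigma_\lambda|_{S_{|\alpha|}\times S_{|\beta|}}]=\langle\sigma_\alpha\cdot\sigma_{\beta^T},\sigma_\lambda\rangle=c^\lambda_{\alpha,\beta^T}$, which is the content of the lemma after cancelling the matching factors $a(B_n)$ on both sides of the chain (the $a(B_n)$ appearing when passing $\wti W(B_n)\rightsquigarrow W(B_n)$ is exactly cancelled against the multiplicity $a(B_n)$ in $\C S\otimes\C S$, so the final answer is the honest coefficient $c^\lambda_{\alpha,\beta^T}$ with no extra constant).

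\textbf{Main obstacle.} Nothing here is deep, but the one point that needs genuine care — rather than a wave of the hand — is the second step: getting the transpose on $\beta$ (and not on $\alpha$) with the correct normalisation, i.e. verifying that tensoring $(\lambda\times\emptyset)$ by $\bigwedge\fk t$ over $W(B_n)$ really does collapse the $\beta$-part of a bipartition to a transposed partition on the "first" factor while leaving $\alpha$ alone. This is a bookkeeping computation with the sign character of $W(B_n)$, the decomposition $\bigwedge\fk t\cong\bigoplus_k\bigwedge^k\fk t$, and the Mackey formula for $\Ind$ from $W(B_{|\alpha|})\times W(B_{|\beta|})$; the potential for an off-by-a-transpose error is the only real risk, and I would check it against a small case such as $n=2$, $\lambda=(2)$ or $(1,1)$, to pin down the convention before writing it up.
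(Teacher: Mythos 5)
Your proposal follows exactly the paper's route: the paper "proves" the lemma by the two displayed identities immediately preceding it — first $\Hom_{\wti W(B_n)}[(\al\times\beta)\otimes \C S,(\lambda\times\emptyset)\otimes \C S]=a(B_n)\,\Hom_{W(B_n)}[\al\times\beta,(\lambda\times\emptyset)\otimes \bigwedge \fk t]$ via $\C S\otimes\C S=a(B_n)\bigwedge\fk t$, then the reduction to $\Hom_{S_{|\al|}\times S_{|\beta|}}[\sigma_\al\boxtimes\sigma_{\beta^T},\sigma_\lambda]$ — and your Mackey/sign-twist sketch is just a fleshing-out of the step the paper calls ``easy to see.'' The one inaccuracy is your claimed cancellation of $a(B_n)$: there is only one such factor in the chain, so nothing cancels and the honest output is $a(B_n)\,c^\lambda_{\al,\beta^T}$ (e.g.\ for $n$ odd, pairing against $\C S=S^++S^-$ doubles every multiplicity); the paper's lemma statement itself silently drops this factor, which is harmless since $a(B_n)\in\{1,2\}$ and only the nonvanishing of the multiplicity is ever used.
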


If $\fg^\vee=\mathfrak{sp}(2n)$, the nilpotent orbits in $\C N^\vee_\sol$ are parameterised (in the analogue of the Jordan form) by partitions of $2n$ where all parts are even and each distinct part occurs with multiplicity at most $2$. If $\fg^\vee=\mathfrak{so}(m)$, the nilpotent orbits in $\C N^\vee_\sol$ are parameterised by partitions of $m$ where all parts are odd and each distinct part occurs with multiplicity at most $2$.

\section{Non-unitarity certificates}

I present several applications of Theorem \ref{dirac-ineq}. 

\subsection{}Fix $\nu\in \fk t^\vee_\bR$
and look at the simple $\bH$-modules $\overline V=\overline V(\C C',\C L')$ with central character $W\cdot \nu$. Let $\CO'^\vee$ be the $G^\vee$-saturation of $\C C'$ with Lie triple $(e'^\vee,h'^\vee,f'^\vee)$. 
Since $G^\vee(\nu)$ acts with finitely many orbits on $\fg_1(\nu)$, there exists a unique open dense orbit $\C C$ in $\fg_1(\nu)$. Denote the $G^\vee$-saturation of $\C C$ by $\C O^\vee$. In particular, this means that $\CO'^\vee\subset \overline{\CO^\vee}$. 

Let $\mathfrak z(\CO'^\vee)$ denote the reductive Lie algebra which is the centralizer of the triple $(e'^\vee,h'^\vee,f'^\vee)$ in $\fg^\vee$. The fact that $\overline V$ has central character $W\cdot h^\vee/2$ implies (see \cite{BC-E8}) that there exists $w\in W$ and $\nu_z\in\fk t^\vee_\bR\cap \mathfrak z(\CO'^\vee)$ such that
\[ w(\nu)/2=h'^\vee/2+\nu_z.
\]
In particular, $(h'^\vee,\nu_z)=0$. Hence 
\begin{equation}\label{e:lengths} |\nu|^2=|h'^\vee|^2/4+|\nu_z|^2.
\end{equation}

The following consequence was noted in \cite{BCT} in a somewhat different form. It says that there no complementary series from tempered modules attached to the orbits in $\C N^\vee_\sol$.

\begin{proposition} Let $\CO^\vee\subset\C N^\vee_\sol$ be given. Suppose $\overline V(\C C,\C L)$ is a simple $\bH$-module with central character $ \nu\in \fk t^\vee_\bR$ such that $\C C\subset \CO^\vee$. Then $\overline V(\C C,\C L)$ is unitary if and only if $\nu\in W\cdot h^\vee/2$ (in which case $\overline V(\C C,\C L)$ is tempered).

\end{proposition}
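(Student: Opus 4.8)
The "only if" direction is the content of the extended Dirac inequality. Set $\CO^\vee \in G^\vee\backslash\C N^\vee_\sol$ and suppose $\overline V = \overline V(\C C,\C L)$ is unitary with $\C C\subset \CO^\vee$. The plan is to exhibit a genuine $\wti W$-representation $\wti\sigma$ with $\Theta(\wti\sigma)=\CO^\vee$ satisfying condition (\ref{e:cond-D}), i.e. $\Hom_{\wti W}[\wti\sigma,\overline V|_W\otimes\C S]\neq 0$; then Theorem \ref{dirac-ineq} gives $|\nu|\le |h^\vee_{\CO^\vee}/2|$, and in fact the second part of that theorem forces $\nu\in W\cdot h^\vee/2$ once equality holds, so it remains to upgrade the inequality to an equality. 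For the existence of $\wti\sigma$: by Theorem \ref{t:CH}(2), $\wti\tau(\CO^\vee,\C E)=X_{-1}(\CO^\vee,\C E)\otimes\C S$ is a nonzero genuine character for a Springer-type $\C E$, and every irreducible constituent $\wti\sigma$ of it has $\Theta(\wti\sigma)=\CO^\vee$ by Theorem \ref{t:CH}(3). So it is enough to know that $\overline V|_W$ contains $\sigma(\CO^\vee,\C E)$ (or rather some $W$-type contributing to $X_{-1}(\CO^\vee,\C E)$) with the right multiplicity so that $\Hom_{\wti W}[X_{-1}(\CO^\vee,\C E)\otimes\C S,\overline V|_W\otimes\C S]\neq 0$. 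This is where one invokes the Kazhdan--Lusztig/Lusztig description: since $\C C$ is (up to $W$) the \emph{open dense} orbit in $\fg^\vee_1(\nu)$ attached to $\overline V$ — wait, here $\C C$ need only be \emph{contained} in $\CO^\vee$, so one should instead use that for a module with support in $\CO^\vee$ the $W$-structure of the standard module $X_q(\CO^\vee,\C E)$ relates to $\overline V|_W$ via the upper-triangular matrix $Q(q)$, and specialising at $q=-1$ and tensoring with $\C S$ keeps the leading term nonzero because $Q(-1)$ has $1$'s on the diagonal.

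Concretely, I would argue: the lowest $W$-type $\sigma(\CO^\vee,\C E)$ occurs in $\overline V|_W$ (this is the standard "lowest $W$-type" statement for the geometric parametrisation of simple $\bH$-modules, via the Springer correspondence for the saturation), and by Borho--MacPherson the only $W$-types in $X_{-1}(\CO^\vee,\C E)$ outside $\sigma(\CO^\vee,\C E)$ are labelled by strictly smaller orbits. Tensoring by $\C S$ and using $\C S\otimes\C S = a(\fg^\vee)\bigwedge\ft$, the pairing $\langle X_{-1}(\CO^\vee,\C E)\otimes\C S,\ \overline V|_W\otimes\C S\rangle_{\wti W}$ picks out $a(\fg^\vee)\langle X_{-1}(\CO^\vee,\C E),\ \overline V|_W\otimes\bigwedge\ft\rangle_W$, and the contribution of $\sigma(\CO^\vee,\C E)$ paired against itself through $\bigwedge\ft$ is nonzero because $\CO^\vee$ is $(-1)$-distinguished in the relevant sense (its $(-1)$-elliptic support is nonempty, which is exactly what $\CO^\vee\in\C N^\vee_\sol$ guarantees via Theorem \ref{t:CH}). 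Thus condition (\ref{e:cond-D}) holds with this $\CO^\vee$, giving $|\nu|\le|h^\vee_{\CO^\vee}/2|$.

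Next, equation (\ref{e:lengths}) applied with $\CO'^\vee=\CO^\vee$ (since $\C C\subset\CO^\vee$ means the $G^\vee$-saturation of $\C C$ is $\CO^\vee$, using that $\CO^\vee\subset\C N^\vee_\sol$ and the hypothesis that $\C C$ lies in a single such orbit) gives $|\nu|^2 = |h^\vee/2|^2 + |\nu_z|^2$, so the Dirac bound $|\nu|\le|h^\vee/2|$ forces $\nu_z=0$, i.e. $\nu\in W\cdot h^\vee/2$, and then $\overline V$ is a tempered (in fact discrete-series-type, being attached to a distinguished nilpotent) module by the Kazhdan--Lusztig classification at a real central character equal to a neutral element of a Lie triple. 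For the converse "if" direction: if $\nu\in W\cdot h^\vee/2$ then $\overline V(\C C,\C L)$ with $\C C\subset\CO^\vee$ is tempered — this is the characterisation of tempered modules in the graded Hecke algebra as those whose central character $\nu$ satisfies $\langle\nu,\varpi\rangle\le 0$ for fundamental weights, which holds for $h^\vee/2$ of a nilpotent — and tempered modules are unitary since the $*$-form is positive definite on them (they carry the Hermitian form coming from the tempered inner product).

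\textbf{Main obstacle.} The delicate point is verifying that condition (\ref{e:cond-D}) is actually met for the orbit $\CO^\vee$ itself, rather than only for some smaller orbit in its closure — equivalently, that the "leading" spin $W$-type survives in $\overline V|_W\otimes\C S$ and pairs nontrivially with $\Theta^{-1}(\CO^\vee)$. This needs the precise compatibility, proved in \cite{CH} and encoded in Theorem \ref{t:CH} together with the properties of $\Theta$ from \cite[Theorems 6.5, 6.6]{CH}, between the lowest $W$-type of $\overline V$, the Springer correspondence for the saturation $\CO^\vee$, the $(-1)$-specialisation $X_{-1}(\CO^\vee,\C E)$, and the nonvanishing of $\wti\tau(\CO^\vee,\C E)$; all of this hinges on $\CO^\vee$ lying in $\C N^\vee_\sol$, which is exactly the standing hypothesis. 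Once that nonvanishing is in hand, the rest is the bookkeeping of (\ref{e:lengths}) and the standard temperedness/unitarity facts.
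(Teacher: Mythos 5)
Your ``only if'' direction is essentially the paper's argument: Lusztig's lowest $W$-type statement gives a Springer-type local system $\C E$ on the saturation $\CO^\vee$ of $\C C$ with $\Hom_W[\sigma(\CO^\vee,\C E),\overline V]\neq 0$, the results of \cite{CH} (Theorem \ref{t:CH} and the listed properties of $\Theta$) produce some $\wti\sigma\in\Theta^{-1}(\CO^\vee)$ occurring in $\sigma(\CO^\vee,\C E)\otimes\C S$, hence condition (\ref{e:cond-D}) holds, and then Theorem \ref{dirac-ineq} together with (\ref{e:lengths}) forces $\nu_z=0$, i.e.\ $\nu\in W\cdot h^\vee/2$. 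Your longer discussion of the pairing of $\overline V|_W\otimes\C S$ with $X_{-1}(\CO^\vee,\C E)\otimes\C S$ is a more roundabout formulation of the same point (upper triangularity of $Q(-1)$ plus the fact that constituents of $\wti\tau(\CO'^\vee,\C E')$ for strictly smaller orbits lie in different fibres of $\Theta$, so they cannot cancel the leading term); it is correct, and since $\overline V$ unitary implies hermitian, the hypothesis of Theorem \ref{dirac-ineq} is met.

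The genuine gap is in the converse (and in the parenthetical temperedness claim). You justify temperedness of $\overline V(\C C,\C L)$ at central character $h^\vee/2$ by ``the characterisation of tempered modules as those whose central character satisfies $\langle\nu,\varpi\rangle\le 0$ for fundamental weights''. No such characterisation exists: temperedness is not detected by the central character alone (Casselman's criterion constrains the weights of the module, not its infinitesimal/central character), and at the central character $W\cdot h^\vee/2$ there are in general many non-tempered simple modules, e.g.\ the spherical one whenever $\CO^\vee$ is not regular. What is needed, and what the paper uses, is the Kazhdan--Lusztig geometric criterion: $\overline V(\C C,\C L)$ with real central character $\nu$ is tempered precisely when $\C C$ is the open dense $G^\vee(\nu)$-orbit in $\fg^\vee_1(\nu)$. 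Here, because the $G^\vee$-saturation of $\C C$ is $\CO^\vee$ and $\nu=h^\vee_{\CO^\vee}/2$, the orbit $\C C$ is forced to be that dense orbit, whence $\overline V(\C C,\C L)$ is tempered and therefore unitary. Relatedly, your remark that the module is ``of discrete-series type, being attached to a distinguished nilpotent'' is incorrect: orbits in $\C N^\vee_\sol$ need not be distinguished (in type $A$ every orbit with distinct Jordan blocks lies in $\C N^\vee_\sol$, yet only the regular orbit is distinguished), so these modules are tempered but not in general discrete series; this extra claim is harmless only because it is not needed anywhere.
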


\begin{proof}
From (\ref{e:lengths}), write $w(\nu)/2=h^\vee/2+\nu_z$ for some $\nu_z\in \mathfrak z(\CO^\vee)$. By \cite{L2}, since $\C C\subset\CO^\vee$, there exists a local system $\C E$ of Springer type on $\CO^\vee$ such that 
\[\Hom_W[\sigma(\CO^\vee,\C E),\overline V(\C C,\C L)]\neq 0.
\]
By the properties of the map $\Theta$, it means that there exists $\wti\sigma\in\Theta^{-1}(\CO^\vee)$ which occurs in $\sigma(\CO^\vee,\C E)\otimes \C S$ and hence $\overline V(\C C,\C L)$ satisfies condition (\ref{e:cond-D}). Theorem  \ref{dirac-ineq} implies that if $\overline V(\C C,\C L)$ is hermitian, then it is unitary only if $|\nu|\le |h^\vee/2|$. But this implies that $\nu_z=0$ and therefore $\nu=w^{-1}(h^\vee/2)$.

Conversely, suppose $\overline V(\C C,\C L)$ has central character $h^\vee/2$. Since the $G^\vee$-saturation of $\C C$ is $\C O^\vee$, this means that $\C C$ is the unique open dense $G^\vee(h^\vee/2)$-orbit in $\fg^\vee_1(h^\vee/2)$. By \cite{KL,L2}, it follows that the module $\overline V(\C C,\C L)$ is tempered, hence unitary.
\end{proof}

\subsection{}Condition (\ref{e:cond-D}) can be used in order to prove non-unitarity as follows. Define a map
\begin{equation}\label{d:sigma}
d: \Irr W\to \mathbb N,\quad d(\sigma)=\min\{|h^\vee/2|~\mid \Hom_{\wti W}[\wti \sigma,\sigma\otimes\C S]\neq 0\text{ for some }\wti\sigma\in\Theta^{-1}(\CO^\vee)\}.
\end{equation}
The Dirac inequality says that if $\overline V$ is a unitary $\bH$-module with central character $\nu$, then 
\[
|\nu|\le \min\{\sigma\in\Irr W\mid \Hom_W[\sigma,\overline V]\neq 0\}.
\]
In light of this, it is particularly interesting to look at central characters $\nu$ such that $|\nu|=|h^\vee_{\CO^\vee}/2|$ for a fixed $\CO^\vee\subset \C N^\vee_\sol$. Define
\[\Good(\CO^\vee)=\{\sigma\in\Irr W\mid d(\sigma)\ge |h^\vee/2|\}. 
\]
Then clearly the Dirac inequality gives the following criterion:

\begin{corollary}\label{c:good}If $\overline V$ has central character $\nu$ with $|\nu|\ge |h^\vee/2|$ and $\overline V$ contains a $W$-type $\sigma$ such that $\sigma\notin \Good(\CO^\vee)$, then $\overline V$ is not unitary.
\end{corollary}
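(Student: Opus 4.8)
The plan is to read this off directly from the Dirac inequality (Theorem~\ref{dirac-ineq}) combined with the definition of $\Good(\CO^\vee)$, so the corollary is essentially a bookkeeping consequence. First I would recall the contrapositive structure: we assume $\overline V$ is unitary with central character $\nu$ and suppose for contradiction that $\overline V$ contains a $W$-type $\sigma$ with $\sigma\notin\Good(\CO^\vee)$; we must derive that $|\nu|<|h^\vee/2|$, contradicting the hypothesis $|\nu|\ge|h^\vee/2|$.

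The key step is to unwind the definitions. Since $\sigma\notin\Good(\CO^\vee)$, by definition of $\Good(\CO^\vee)$ we have $d(\sigma)<|h^\vee/2|$. By the definition of $d$ in (\ref{d:sigma}), $d(\sigma)=|h^\vee_{\CO'^\vee}/2|$ for some nilpotent orbit $\CO'^\vee\subset\C N^\vee_\sol$ admitting $\wti\sigma\in\Theta^{-1}(\CO'^\vee)$ with $\Hom_{\wti W}[\wti\sigma,\sigma\otimes\C S]\neq 0$. Now since $\sigma$ occurs in $\overline V|_W$, the space $\Hom_{\wti W}[\wti\sigma,\overline V|_W\otimes\C S]$ contains $\Hom_{\wti W}[\wti\sigma,\sigma\otimes\C S]\neq 0$, so condition (\ref{e:cond-D}) is satisfied for the orbit $\CO'^\vee$. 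Applying Theorem~\ref{dirac-ineq} to the unitary module $\overline V$ with this $\CO'^\vee$ yields $|\nu|\le|h^\vee_{\CO'^\vee}/2|=d(\sigma)<|h^\vee/2|$, which contradicts $|\nu|\ge|h^\vee/2|$. Hence $\overline V$ cannot be unitary.

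There is essentially no obstacle here; the only point requiring a word of care is that $\overline V$ must be hermitian in order to invoke Theorem~\ref{dirac-ineq}, but a unitary module is by definition equipped with a positive-definite invariant hermitian form, so this is automatic. I would also note in passing that the minimum defining $d(\sigma)$ is over a nonempty set precisely because $\Theta$ is surjective and every $\sigma$ appears in some $\sigma(\CO^\vee,\C E)$ with the relevant spin multiplicity nonzero for at least one orbit (indeed $\C S\otimes\C S=a(\fg^\vee)\bigwedge\fk t$ guarantees $\sigma$ occurs in some $\wti\sigma\otimes\C S$ with $\wti\sigma$ genuine); this is implicit in the well-definedness of the map $d$. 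Thus the proof is a two-line deduction from Theorem~\ref{dirac-ineq} once the definitions are expanded.
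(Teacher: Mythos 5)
Your proof is correct and follows exactly the route the paper intends: the paper offers no separate argument, treating the corollary as an immediate unwinding of Theorem~\ref{dirac-ineq} together with the definitions of $d$ and $\Good(\CO^\vee)$, which is precisely what you carry out (including the harmless observations that unitarity supplies the required hermitian form and that the minimum defining $d(\sigma)$ is over a nonempty set).
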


The philosophy is that the larger $\CO^\vee$ is, the fewer {\it good} $W$-types it admits and therefore, the fewer simple modules can be unitary.  Of course, if $\CO'^\vee\subset \overline \CO^\vee$, so that $|h'^\vee|\le |h^\vee|$, then
\[\Good(\CO'^\vee)\supseteq \Good(\CO^\vee).
\]

To compute $\Good(\CO^\vee)$ in certain cases, I will use the connection with the Springer correspondence and induction. In light of the results of \cite{CH}, see Theorem \ref{t:CH} above, define $\overline R(W)_{-1}$ to be the $(-1)$-elliptic representation space of $W$: the quotient of $R(W)\otimes_\bZ \bC$ by the radical of the $(-1)$-elliptic form
\[\langle\sigma,\sigma\rangle_{-1}^{\el}=\langle\sigma,\sigma'\otimes {\bigwedge}^\bullet \mathfrak t\rangle_W.
\]
This space can be identified with the space of class functions on $W$ supported on the $(-1)$-elliptic conjugacy classes. If $\sigma\in R(W)$, denote by $[\sigma]_{-1}$ its class in $\overline R(W)_{-1}$. To make the connection with Springer theory, recall that we may write 
\begin{equation}
\sigma(\CO_0^\vee,\C E_0)=X_q(\CO_0^\vee,\C E_0)+\sum_{(\CO'^\vee,\C E'),\ \CO'^\vee\subset \partial\overline{\CO_0^\vee}} Q(q)_{(\C E_0,\C E')}~X_q(\CO'^\vee,\C E')
\end{equation}
for certain polynomials $Q(q)_{(\C E,\C E')}\in q\bZ[q]$. Specialise at $q=-1$ and tensor with $\C S$:
\begin{equation}\label{e:-1}
\sigma(\CO_0^\vee,\C E_0)\otimes \C S=\sum_{(\CO'^\vee,\C E'),\  \CO'^\vee\subset \overline{\CO_0^\vee}\cap  \C N^\vee_\sol}Q(-1)_{(\C E_0,\C E')}~ \wti\tau(\CO'^\vee,\C E').
\end{equation}
Let $R(\wti W)_\gen$ denote the $\bC$-span of $\Irr_\gen\wti W$. This space has an involution coming from tensoring with the non-genuine sign representation, and let $R(\wti W)_\gen^\Sg$ denote the $(+1)$-eigenspace of this involution. Notice that $\C S\in R(\wti W)_\gen^\Sg$ and since the set $\Good(\CO^\vee)$ is closed under tensoring with the sign representation, we are really only interested in the basis elements of $R(\wti W)_\gen^\Sg$. By \cite[Proposition 6.1]{CH}, the map
\[\iota:\overline R(W)_{-1}\to R(\wti W)_\gen^\Sg,\quad \sigma\mapsto \sigma\otimes \C S
\]
is injective and its image is spanned by $\tau(\CO^\vee,\C E)$, $\CO^\vee\subset \C N^\vee_\sol$. Moreover, if $\fg^\vee$ is simple, then $\iota$ is a linear isomorphism unless $\fg^\vee$ is $E_7$ or $D_{2n}$. In $E_7$, the failure of surjectivity is because of the orbit $\CO^\vee=A_4+A_1$, which turns out to be the smallest orbit in $\C N^\vee_\sol$ in $E_7$. In $D_{2n}$, the failure of surjectivity is because of orbits parameterised by partitions $(a_1,a_1,a_2,a_2,\dots,a_k,a_k)$ of $2n$, where $a_i$ are distinct odd integers. The following proposition could be extended to cover these cases as well by making appropriate changes for these exceptions, but I will only apply it to do computations for large orbits, where these exceptions do not enter.

\begin{proposition}\label{p:span}Suppose $\fg^\vee$ is a simple Lie algebra.  
Let $\CO^\vee$ be an orbit in $\C N^\vee_\sol$. If $\fg^\vee$ is of type $E_7$, assume $\CO^\vee\neq A_4+A_1$ and if $\fg^\vee=\mathfrak {so}(2n)$, suppose $|h^\vee_{\CO^\vee}|>|h^\vee_{\CO^\vee_1}|$, where $\CO^\vee_1$ is parameterised by $(k+1,k+1,k-1,k-1)$ if $n=2k$ or $(k+2,k+2,k-1,k-1)$ if $n=2k+1$. 

An irreducible Weyl group representation $\sigma=\sigma(\CO_0^\vee,\C E_0)$ belongs to $\Good(\CO^\vee)$ if and only if in $\overline R(W)_{-1}$:
\[ [\sigma]_{-1}\in\text{span}\{[X_{-1}(\CO'^\vee,\C E')]_{-1}: \CO'^\vee\subset \overline \CO_0^\vee\cap \C N^\vee_\sol \text{ and } |h^\vee_{\CO'^\vee}|\ge |h^\vee_{\CO^\vee}|\}.
\]
\end{proposition}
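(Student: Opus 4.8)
The strategy is to unwind the definition of $\Good(\CO^\vee)$ through the map $\iota$ and the expansion \eqref{e:-1}, reducing the membership question to a statement about the $(-1)$-elliptic representation space $\overline R(W)_{-1}$. First I would record the key reformulation of the Dirac condition \eqref{e:cond-D} in terms of $\iota$: for $\sigma=\sigma(\CO_0^\vee,\C E_0)$, the set of orbits $\CO^\vee$ appearing in \eqref{e:cond-D} (i.e., those $\CO^\vee$ with some $\wti\sigma\in\Theta^{-1}(\CO^\vee)$ occurring in $\sigma\otimes\C S$) is exactly the set of $\CO'^\vee\subset\overline{\CO_0^\vee}\cap\C N^\vee_\sol$ such that $Q(-1)_{(\C E_0,\C E')}\neq 0$ for some $\C E'$ on $\CO'^\vee$, by Theorem \ref{t:CH}(3) together with the expansion \eqref{e:-1} and the fact that each $\wti\tau(\CO'^\vee,\C E')$ is a genuine character supported (under $\Theta$) on $\CO'^\vee$. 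Consequently $d(\sigma)=\min\{|h^\vee_{\CO'^\vee}/2| : \CO'^\vee\subset\overline{\CO_0^\vee}\cap\C N^\vee_\sol,\ \sigma\otimes\C S \text{ involves } \wti\tau(\CO'^\vee,\C E')\}$, and $\sigma\in\Good(\CO^\vee)$ exactly when every such $\CO'^\vee$ satisfies $|h^\vee_{\CO'^\vee}|\ge|h^\vee_{\CO^\vee}|$.

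Next I would pass to the span condition. Under the hypotheses on $\fg^\vee$ (excluding the $E_7$ orbit $A_4+A_1$ and the small $\mathfrak{so}(2n)$ orbits), $\iota$ is a linear isomorphism $\overline R(W)_{-1}\xrightarrow{\sim} R(\wti W)_\gen^\Sg$ sending $[X_{-1}(\CO'^\vee,\C E')]_{-1}$ to $\wti\tau(\CO'^\vee,\C E')$. So, applying $\iota^{-1}$ to \eqref{e:-1}, the class $[\sigma]_{-1}$ is the integer combination $\sum Q(-1)_{(\C E_0,\C E')}[X_{-1}(\CO'^\vee,\C E')]_{-1}$ over $\CO'^\vee\subset\overline{\CO_0^\vee}\cap\C N^\vee_\sol$. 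Now I would argue both directions. If $\sigma\in\Good(\CO^\vee)$, then every $\CO'^\vee$ with $Q(-1)_{(\C E_0,\C E')}\neq 0$ has $|h^\vee_{\CO'^\vee}|\ge|h^\vee_{\CO^\vee}|$, so the displayed expansion already exhibits $[\sigma]_{-1}$ inside the asserted span. Conversely, suppose $[\sigma]_{-1}$ lies in $\mathrm{span}\{[X_{-1}(\CO'^\vee,\C E')]_{-1}: \CO'^\vee\subset\overline{\CO_0^\vee}\cap\C N^\vee_\sol,\ |h^\vee_{\CO'^\vee}|\ge|h^\vee_{\CO^\vee}|\}$; since $\iota$ is an isomorphism and the $\wti\tau(\CO'^\vee,\C E')$ (equivalently the $[X_{-1}(\CO'^\vee,\C E')]_{-1}$) are linearly independent over the relevant index set, the expansion of $[\sigma]_{-1}$ in that basis is \emph{unique}, hence it must coincide with the expansion coming from \eqref{e:-1}; therefore $Q(-1)_{(\C E_0,\C E')}=0$ whenever $|h^\vee_{\CO'^\vee}|<|h^\vee_{\CO^\vee}|$, which says precisely that no $\wti\tau(\CO'^\vee,\C E')$ with $|h^\vee_{\CO'^\vee}|<|h^\vee_{\CO^\vee}|$ occurs in $\sigma\otimes\C S$, i.e.\ $\sigma\in\Good(\CO^\vee)$.

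The main obstacle is bookkeeping around linear independence and the basis: I must be careful that $[X_{-1}(\CO'^\vee,\C E')]_{-1}$ ranges over a genuine basis of $\overline R(W)_{-1}$ (this is where Theorem \ref{t:CH} and \cite[Proposition 6.1]{CH} are essential, and where the excluded exceptions $A_4+A_1$ in $E_7$ and the parity-type orbits in $D_{2n}$ would break the argument), so that "unique expansion in a basis" is legitimate, and that restricting the index set to $\{|h^\vee_{\CO'^\vee}|\ge|h^\vee_{\CO^\vee}|\}$ still yields a linearly independent family — which is immediate since it is a subset of a basis. A secondary point to verify cleanly is that the closure condition $\CO'^\vee\subset\overline{\CO_0^\vee}$ forced by Borho–MacPherson is consistent with the length condition, i.e.\ that the span in the statement is really indexed by orbits in $\overline{\CO_0^\vee}\cap\C N^\vee_\sol$ and not all of $\C N^\vee_\sol$; this is built into the definition of $X_{-1}(\CO_0^\vee,\C E_0)$ and the triangularity of $Q(q)$. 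Everything else is a direct translation through the maps already set up in the excerpt.
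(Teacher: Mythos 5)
Your overall route is the paper's route: expand $\sigma(\CO_0^\vee,\C E_0)\otimes\C S$ via (\ref{e:-1}), identify the orbits satisfying (\ref{e:cond-D}) with those having $Q(-1)_{(\C E_0,\C E')}\neq 0$, and read the $\Good$-condition off the expansion in $\overline R(W)_{-1}$. However, there is a genuine gap in how you justify the ``span $\Rightarrow$ good'' direction. You claim that ``under the hypotheses on $\fg^\vee$'' the map $\iota$ is a linear isomorphism, so that the classes $[X_{-1}(\CO'^\vee,\C E')]_{-1}$ form a basis and expansions are unique. This misreads the hypotheses: they restrict the orbit $\CO^\vee$ (excluding $A_4+A_1$ in $E_7$, and small orbits in $\mathfrak{so}(2n)$), not the type of $\fg^\vee$. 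If $\fg^\vee$ is of type $E_7$ or $D_{2n}$, then $\iota$ fails to be surjective no matter which $\CO^\vee$ you choose, so your basis/uniqueness argument is unjustified precisely in the cases the hypotheses are meant to handle. Moreover, even in the remaining types, surjectivity of $\iota$ only tells you that the classes $[X_{-1}(\CO'^\vee,\C E')]_{-1}$ \emph{span} $\overline R(W)_{-1}$; it does not by itself give their linear independence, so ``unique expansion in a basis'' is a non sequitur as stated.

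The paper's proof closes this differently: by \cite{CH}, $\Hom_{\wti W}[\wti\tau(\CO'^\vee,\C E'),\sigma\otimes\C S]\neq 0$ is equivalent to $\langle X_{-1}(\CO'^\vee,\C E'),\sigma\rangle^{\el}_{-1}\neq 0$, i.e.\ one uses the orthogonality properties of the classes $[X_{-1}(\CO'^\vee,\C E')]_{-1}$ under the $(-1)$-elliptic pairing (distinct orbits pair trivially) rather than any global basis statement; the role of the hypotheses on $\CO^\vee$ is only to guarantee that the exceptional orbits responsible for the failure of surjectivity of $\iota$ (where this clean correspondence degenerates) all satisfy $|h^\vee_{\CO'^\vee}|<|h^\vee_{\CO^\vee}|$ and therefore never occur in the restricted span. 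To repair your argument you would need to replace ``$\iota$ is an isomorphism, hence a basis'' by this elliptic-pairing orthogonality (or by linear independence of the classes attached to the non-exceptional orbits), and then observe, as the paper does, that the excluded orbits cannot interfere because they fail the length inequality.
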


\begin{proof}
From (\ref{e:-1}), we see that $\Hom_{\wti W}[\wti\tau(\CO'^\vee,\C E'),\sigma(\CO_0^\vee,\C E_0)\otimes \C S]\neq 0$ if and only if $Q(-1)_{(\C E_0,\C E')}\neq 0$. Moreover, this is equivalent with $\langle X_{-1}(\CO'^\vee,\C E'),\sigma(\CO_0^\vee,\C E_0)\rangle^\el_{-1}\neq 0$. The conditions on $\CO^\vee$ in types $E_7$ and $D_{2n}$ are such that no orbits of the exceptional kind that I mentioned above can have $ |h^\vee_{\CO'^\vee}|\ge |h^\vee_{\CO^\vee}|$. The claim then follows from the definition of $\Good(\CO^\vee)$.
\end{proof}

\begin{lemma}[\cite{BCT}]\label{l:reg}
 If $\CO^\vee_{\mathsf r}$ is the regular nilpotent orbit, then $\Good(\CO^\vee_{\mathsf r})=\{\triv,\sgn\}$.
\end{lemma}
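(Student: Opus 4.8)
The plan is to apply Proposition \ref{p:span} to the case $\CO^\vee = \CO^\vee_{\mathsf r}$. Since $h^\vee_{\mathsf r}/2 = \rho$ and $N(\C S) = (\rho,\rho)$ is the largest possible value of $N(\wti\sigma)$ for $\wti\sigma\in\Irr_\gen\wti W$, the condition $|h^\vee_{\CO'^\vee}|\ge|h^\vee_{\CO^\vee_{\mathsf r}}|$ forces $\CO'^\vee=\CO^\vee_{\mathsf r}$ itself. Thus $\sigma=\sigma(\CO_0^\vee,\C E_0)$ lies in $\Good(\CO^\vee_{\mathsf r})$ if and only if $[\sigma]_{-1}$ lies in the span of $[X_{-1}(\CO^\vee_{\mathsf r})]_{-1}$ inside $\overline R(W)_{-1}$, i.e. the one-dimensional subspace spanned by the image of the (unique, since the regular orbit carries only the trivial local system) class $[X_{-1}(\CO^\vee_{\mathsf r})]_{-1}$.

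The next step is to identify this one-dimensional subspace. The regular orbit's Springer fibre is a point, so $X_q(\CO^\vee_{\mathsf r}) = \sigma(\CO^\vee_{\mathsf r},\triv) = \triv$ (with my normalisation $\sigma(0,\triv)=\triv$ sending the regular orbit to $\triv$), hence $X_{-1}(\CO^\vee_{\mathsf r}) = \triv$ as a $W$-representation and $[X_{-1}(\CO^\vee_{\mathsf r})]_{-1} = [\triv]_{-1}$. So I need: for which $\sigma\in\Irr W$ is $[\sigma]_{-1}$ a scalar multiple of $[\triv]_{-1}$ in $\overline R(W)_{-1}$? Equivalently, writing class functions, for which $\sigma$ is the restriction of $\chi_\sigma$ to the $(-1)$-elliptic classes proportional to the restriction of $\chi_{\triv}\equiv 1$? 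Since $\wti\tau(\CO^\vee_{\mathsf r}) = X_{-1}(\CO^\vee_{\mathsf r})\otimes\C S = \C S$ is (for semisimple root datum) the irreducible spin representation, $\Theta^{-1}(\CO^\vee_{\mathsf r})$ consists precisely of the irreducible constituents of $\C S$, and $d(\sigma)=|\rho|$ exactly when $\sigma\otimes\C S$ contains one of those constituents, i.e. when $\Hom_{\wti W}[\C S,\sigma\otimes\C S]\ne 0$, i.e. (using $\C S\otimes\C S = a(\fg^\vee)\bigwedge\ft$) when $\Hom_W[\triv,\sigma\otimes\bigwedge\ft]\ne 0$, i.e. when $\sigma$ occurs in $\bigwedge^\bullet\ft$. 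The plan is to show the only such $\sigma$ are $\triv$ (in $\bigwedge^0\ft$) and $\sgn$ (in $\bigwedge^{\rk}\ft$).

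To finish, I would argue that although many irreducibles occur in $\bigwedge^\bullet\ft$, only $\triv$ and $\sgn$ have $|h^\vee/2|$ attaining the maximum $|\rho|$: indeed if $\sigma=\sigma(\CO_0^\vee,\C E_0)$ occurs in $\C S$ then $\Theta$ sends some genuine constituent of $\sigma\otimes\C S$ to an orbit $\CO^\vee$ with $N(\wti\sigma)=(h^\vee/2,h^\vee/2)\le(\rho,\rho)$, with equality iff $\CO^\vee=\CO^\vee_{\mathsf r}$; and among $W$-types occurring in $\C S$ the ones with $d(\sigma)=|\rho|$ must pair nontrivially with $X_{-1}(\CO^\vee_{\mathsf r})=\triv$, i.e. $\langle\triv,\sigma\otimes\bigwedge^\bullet\ft\rangle_W\ne 0$, but moreover by the triangularity of $Q(q)$ and the extremality of the regular orbit, $[\sigma]_{-1}$ must equal a multiple of $[\triv]_{-1}$; writing $\sigma\otimes\bigwedge^\bullet\ft$ and using that the $(-1)$-elliptic form is nondegenerate on $\overline R(W)_{-1}$, the class $[\sigma]_{-1}$ is a multiple of $[\triv]_{-1}$ precisely when $\langle\sigma,\sigma'\rangle_{-1}^{\el}=c\,\langle\triv,\sigma'\rangle_{-1}^{\el}$ for all $\sigma'$, which (testing against $\sigma'=\sigma$ and $\sigma'=\triv$) forces $\sigma\in\{\triv,\sgn\}$ since $[\triv]_{-1}=[\sgn]_{-1}$ up to sign in the elliptic quotient and these are the only irreducibles whose restriction to $(-1)$-elliptic classes is (up to scalar) the constant function. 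The main obstacle is making the last proportionality argument clean and uniform across all types rather than appealing to a case-by-case check of $\bigwedge^\bullet\ft$; I expect the cleanest route is to observe directly that $\dim\overline R(W)_{-1}$-wise, $[\triv]_{-1}$ spans the line of $W$-invariants-up-to-sign, and any $\sigma$ with $[\sigma]_{-1}\in\bC[\triv]_{-1}$ must restrict to $\pm 1$ times the constant on all $(-1)$-elliptic classes, which on the Coxeter class (which is $(-1)$-elliptic) pins down $\chi_\sigma$ on enough classes to conclude $\sigma\in\{\triv,\sgn\}$.
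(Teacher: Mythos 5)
Your opening reduction is the same as the paper's: by Proposition \ref{p:span}, since the regular orbit is the unique orbit maximising $|h^\vee|$, $\sigma\in\Good(\CO^\vee_{\mathsf r})$ if and only if $[\sigma]_{-1}$ is a multiple of $[X_{-1}(\CO^\vee_{\mathsf r})]_{-1}=[\triv]_{-1}$, i.e.\ $\sigma(w)=\sigma(1)$ for every $(-1)$-elliptic $w$ (the multiple is $\sigma(1)$, since $1$ is $(-1)$-elliptic; it is not $\pm 1$ as you assert later). Two slips on the way: with the paper's normalisation $\sigma(0,\triv)=\triv$, the regular orbit carries $\sgn$, not $\triv$ (harmless here, because $\sgn(w)=\det_{\ft}(w)=1$ for every $(-1)$-elliptic $w$, so $[\sgn]_{-1}=[\triv]_{-1}$ exactly, not just ``up to sign''); and $d(\sigma)=|\rho|$ is \emph{not} equivalent to $\sigma\otimes\C S$ containing a constituent of $\C S$ --- since $d$ is a minimum, you need \emph{every} constituent of $\sigma\otimes\C S$ to lie in $\Theta^{-1}(\CO^\vee_{\mathsf r})$. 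In particular ``$\sigma$ occurs in $\bigwedge^\bullet\ft$'' is far weaker than goodness (the reflection representation always occurs, and in type $A$ every hook occurs), so your intermediate ``plan to show the only such $\sigma$ are $\triv$ and $\sgn$'' targets a false statement.

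The genuine gap is the concluding step: you never prove that $\triv$ and $\sgn$ are the only irreducibles whose character is constant on the $(-1)$-elliptic classes. The sentence ``these are the only irreducibles whose restriction to $(-1)$-elliptic classes is (up to scalar) the constant function'' is exactly the claim to be established, so the ``testing against $\sigma'=\sigma$ and $\sigma'=\triv$'' step is circular. Your proposed clean route also fails at a factual point: the Coxeter class is \emph{not} $(-1)$-elliptic in general --- it is $(-1)$-elliptic precisely when $h/2$ is not an exponent, which fails for instance in $S_n$ with $n$ even, in all $D_n$, and in $E_7$ --- and even where it is, ``pins down $\chi_\sigma$ on enough classes'' is not an argument. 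The paper closes this step with a concrete device you are missing: choose two simple roots $\alpha,\beta$ spanning an $A_2$; then $s_\alpha s_\beta$ is $(-1)$-elliptic in the parabolic $S_3$, hence in $W$, so $\sigma(s_\alpha s_\beta)=\sigma(1)$ forces $\sigma|_{S_3}$ to contain no copy of the reflection representation of $S_3$, and from this one concludes $\sigma\in\{\triv,\sgn\}$. Some such explicit supply of $(-1)$-elliptic elements (or an equivalent substitute) is indispensable, and without it your argument does not reach the conclusion.
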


\begin{proof}
This result has already appeared in \cite{BCT}, but I will reproduce the argument for convenience. By Proposition \ref{p:span}, a $W$-type $\sigma$ is good for $\CO^\vee_{\mathsf r}$ if and only if $[\sigma]_{-1}=m[X_{-1}(\CO^\vee_{\mathsf r},\triv)]_{-1}=m[\triv]_{-1}$, for some $m\in\mathbb N$. Equivalently, $\sigma(w)=m$ for all $(-1)$-elliptic $w\in W$. If $W=\bZ/2$, there is nothing to prove. Otherwise, pick two simple roots $\al,\beta$ in $\Pi$ forming an $A_2$. Then $s_\al s_\beta$ is $(-1)$-elliptic in $S_3$ and hence in $W$, which means that $\sigma(s_\al s_\beta)=\sigma(1)$. Restricting $\sigma$ to $S_3$, this means precisely that $\sigma$ does not contain any copy of the reflection representation of $S_3$. But then $\sigma$ must be the trivial or the sign representation of $W$.
\end{proof}

\begin{lemma}\label{l:subreg}
\begin{enumerate}
\item If $\CO^\vee_{\mathsf{sr}}$ is the subregular nilpotent orbit and $\rk(\fg^\vee)=2$ or $\fg^\vee$ is of type $A_3$ or $C_3$, then $\Good(\CO^\vee_{\mathsf{sr}})=\Irr W.$
\item If $W$ contains $S_5$ as a parabolic subgroup, then $\sigma\in  \Good(\CO^\vee_{\mathsf {sr}})$ only if the restriction of $\sigma$ to $S_5$ only contains the $S_5$-types $\sigma_{(5)}$, $\sigma_{(1^5)}$, $\sigma_{(41)}$, or $\sigma_{(21^3)}$.
The list of good $W$-types $\Good(\CO^\vee_{\mathsf sr})$ is:
\begin{enumerate}
\item $\{\triv,\sgn,\refl,\refl\otimes\sgn\}$ if $\fg^\vee$ is of type $A_{n-1}$, $n\ge 5$, or of type $E$;
\item$\{(n)\times \emptyset, (1^n)\times\emptyset, \emptyset\times (n),\emptyset\times (1^n), (n-1,1)\times\emptyset, \emptyset\times (2,1^{n-2}), (n-1)\times (1), (1)\times (1^{n-1})\}$, if $\fg^\vee$ is of type $C_n$, $n\ge 4$;
\item$\{(n)\times \emptyset,\emptyset\times (1^n), (n-1,1)\times\emptyset, \emptyset\times (2,1^{n-2}), (n-1)\times (1), (1)\times (1^{n-1})\}$, if $\fg^\vee$ is of type $B_n$, $n\ge 3$;
\item $\{(n)\times \emptyset, (1^n)\times\emptyset, (n-1,1)\times\emptyset,  (2,1^{n-2})\times\emptyset, (n-1)\times (1),  (1^{n-1})\times (1)\}$, if $\fg^\vee$ is of type $D_n$, $n\ge 4$;
\item $\{\phi_{1,0},\phi_{4,1},\phi_{4,13},\phi_{2,4}',\phi_{2,16}'',\phi_{1,24}\}$ if $\fg^\vee$ is of type $F_4$.
\end{enumerate}

\end{enumerate}
\end{lemma}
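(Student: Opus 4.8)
The plan is to apply Proposition \ref{p:span} systematically. By that proposition, $\sigma=\sigma(\CO_0^\vee,\C E_0)$ is in $\Good(\CO^\vee_{\mathsf{sr}})$ precisely when $[\sigma]_{-1}$ lies in the span of those $[X_{-1}(\CO'^\vee,\C E')]_{-1}$ with $\CO'^\vee\subset\overline{\CO_0^\vee}\cap\C N^\vee_\sol$ and $|h^\vee_{\CO'^\vee}|\ge|h^\vee_{\mathsf{sr}}|$. Since $\CO^\vee_{\mathsf{sr}}$ is the subregular orbit, the only nilpotent orbits $\CO'^\vee$ with $|h^\vee_{\CO'^\vee}|\ge|h^\vee_{\mathsf{sr}}|$ are $\CO^\vee_{\mathsf r}$ and $\CO^\vee_{\mathsf{sr}}$ itself (one should check $\CO^\vee_{\mathsf{sr}}\subset\C N^\vee_\sol$, which holds for all simple $\fg^\vee$ of rank $\ge 2$). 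Hence $\sigma\in\Good(\CO^\vee_{\mathsf{sr}})$ if and only if $[\sigma]_{-1}$ is a linear combination of $[X_{-1}(\CO^\vee_{\mathsf r})]_{-1}=[\triv]_{-1}$ and the classes $[X_{-1}(\CO^\vee_{\mathsf{sr}},\C E')]_{-1}$ for the (one or two, depending on $\Phi$) Springer local systems on $\CO^\vee_{\mathsf{sr}}$. This reduces the whole lemma to a computation in the $(-1)$-elliptic representation space $\overline R(W)_{-1}$, i.e. an identity of class functions on $(-1)$-elliptic conjugacy classes.

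For part (1): when $\rk(\fg^\vee)=2$, or $\fg^\vee$ is of type $A_3$ or $C_3$, the Springer correspondents of $\CO^\vee_{\mathsf r},\CO^\vee_{\mathsf{sr}}$ together with the remaining structure force the span of $\{[\triv]_{-1},[X_{-1}(\CO^\vee_{\mathsf{sr}},\C E')]_{-1}\}$ to be all of $\overline R(W)_{-1}$; I would verify this directly from the known Springer data and the explicit description of $X_{-1}$ in small rank (for $A_3$, $W=S_4$, the $(-1)$-elliptic classes are the $4$-cycle only, so $\overline R(W)_{-1}$ is one-dimensional; for $C_3$ and the rank-$2$ cases, a short case check). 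For part (2): the key observation is that if $\alpha,\beta,\gamma,\delta$ generate an $A_4=S_5$ parabolic, then in $S_5$ the orbit of $W$ closure containing $\sigma$, restricted to $S_5$, can only involve the good $S_5$-types. Concretely, $\Good(\CO^\vee_{\mathsf{sr}})$ for $W(A_4)=S_5$ is computed first — from Proposition \ref{p:span} and the $(-1)$-Kostka numbers this gives exactly $\{\sigma_{(5)},\sigma_{(1^5)},\sigma_{(41)},\sigma_{(21^3)}\}$ — and then, since a parabolic restriction of a good $W$-type must be a sum of good $W_J$-types (because a $(-1)$-elliptic element of $W_J$ is $(-1)$-elliptic in $W$, as noted before the lemma), this constrains $\sigma|_{S_5}$ and rules out all but the listed $W$-types. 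The precise list in each classical type (items (a)–(d)) is then obtained by restricting each candidate $W$-type to every $S_5$-parabolic and checking the branching against the allowed $S_5$-types; for $E_6,E_7,E_8$ one uses Carter's tables the same way; for $F_4$ one checks the six named characters $\phi_{1,0},\phi_{4,1},\phi_{4,13},\phi_{2,4}',\phi_{2,16}'',\phi_{1,24}$ directly against Proposition \ref{p:span} using the (small) explicit $\Theta$ and $X_{-1}$ data for $F_4$ from \cite{Ci,CH}.

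The main obstacle is the verification that the $S_5$-branching condition is not merely necessary but, together with the span condition from Proposition \ref{p:span}, \emph{exactly} cuts out the stated lists — in other words, showing no further $W$-types survive and all listed ones genuinely do. This is where one must actually pin down $X_{-1}(\CO^\vee_{\mathsf{sr}},\C E')$ as an element of $\overline R(W)_{-1}$ in each type: in the classical types this amounts to identifying $X_{-1}$ for the subregular orbit via its Springer fibre cohomology (a standard but slightly delicate computation, using that the subregular Springer fibre is a chain/tree of $\mathbb P^1$'s in type $A$ and an analogous $ADE$-type configuration otherwise), and in the exceptional types to reading off the relevant $Q(-1)$ entries from the known Springer tables. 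I expect the $D_n$ case to need the most care because of the two local systems on $\CO^\vee_{\mathsf{sr}}$ and the $(\alpha\times\alpha)^\pm$ splitting, and because the hypothesis of Proposition \ref{p:span} excludes some small $D_{2n}$ orbits — but the subregular orbit in $D_n$ is safely above those, so the proposition applies and the computation goes through.
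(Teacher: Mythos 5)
Your overall skeleton (reduce everything to Proposition \ref{p:span}, i.e.\ to a span condition in $\overline R(W)_{-1}$, and exploit $(-1)$-elliptic elements of an $S_5$ parabolic) matches the paper's strategy for the simply laced case, but the step you rely on to produce the lists (a)--(d) does not work. You propose to obtain the classical-type lists by restricting each candidate $W$-type to the $S_5$-parabolics and checking the branching against the four allowed $S_5$-types. This criterion cannot be sufficient: $W(B_n)=W(C_n)$ and they have the same $A_4$-parabolics, yet the lemma's lists for $B_n$ and $C_n$ differ (e.g.\ $\emptyset\times(n)$ and $(1^n)\times\emptyset$ are good in type $C_n$ but not in type $B_n$). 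The distinction comes from the structure of $\Theta^{-1}(\CO^\vee_{\mathsf{sr}})$, which has two elements for $\mathfrak{sp}(2n)$ but only one for $\mathfrak{so}(m)$; the paper handles types $B/C/D$ by the Littlewood--Richardson criterion of Lemma \ref{l:spin-B}, not by $S_5$-branching. Moreover the $S_5$-criterion is vacuous exactly where parts of the statement still need proving: $B_3$, $B_4$, $C_4$, $D_4$ contain no $A_4$-parabolic, and $W(F_4)$ has no $A_4$-parabolic at all, so your plan for $F_4$ cannot use it; the paper instead restricts to the quasi-Levi $W(B_4)\subset W(F_4)$ (which is not parabolic) and uses Alvis' tables. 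Also, verifying that the six named $F_4$-characters are good does not establish the list -- you must exclude the remaining characters of $W(F_4)$, and your sketch omits this.

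Two further points. First, your principle ``a parabolic restriction of a good $W$-type must be a sum of good $W_J$-types'' is asserted, not proved: it does not follow merely from the fact that $(-1)$-elliptic elements of $W_J$ are $(-1)$-elliptic in $W$, because the thresholds $|h^\vee_{\mathsf{sr}}|$ and the spanning sets for $\fg^\vee$ and for the Levi are different. What is actually available (and what the paper uses) is the character identity $\sigma(w)=a\,\refl(w)+b$ on $(-1)$-elliptic $w$, coming from $\refl\otimes\C S\in\mathrm{span}\{\wti\sigma_{\mathsf{sr}}+\wti\sigma_{\mathsf{sr}}\otimes\sgn,\,\C S\}$, evaluated on $3$- and $5$-cycles of an $S_5$ parabolic; that is the statement you need to extract and justify. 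Second, part (1) is much simpler than your spanning computation: in rank $2$ and in types $A_3$, $C_3$ the subregular orbit is the minimal orbit of $\C N^\vee_\sol$, so the defining condition of $\Good(\CO^\vee_{\mathsf{sr}})$ is vacuous; and your sample check there is incorrect -- the $(-1)$-elliptic classes of $S_4$ are the identity and the $3$-cycles (elements with all cycle lengths odd), not the $4$-cycle, so $\overline R(S_4)_{-1}$ is two-dimensional.
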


\begin{proof}
Claim (1) is immediate, since for $\rk(\fg^\vee)=2$ or for $A_3,C_3$, the subregular orbit is the smallest orbit in $\C N^\vee_\sol$, hence the condition for good $W$-types becomes superfluous.

For (2), suppose $\fg^\vee$ is simply laced of rank at least $4$. In that case, there is only one irreducible $W$-representation attached by the Springer correspondence to $\CO^\vee_{\mathsf{sr}}$ and that is $\refl$. As one can see from \cite{Ci,CH}, the preimage $\Theta^{-1}(\CO^\vee_{\mathsf{sr}})$ consists either of one irreducible $\wti W$-representation $\wti\sigma_{\mathsf{sr}}$ (if this is self-dual under tensoring with $\sgn$) or two representations $\{\wti\sigma_{\mathsf{sr}},\wti\sigma_{\mathsf{sr}}\otimes\sgn\}$. In addition, 
\[ \refl\otimes\C S\in \text{span} \{\wti\sigma_{\mathsf{sr}}+\wti\sigma_{\mathsf{sr}}\otimes\sgn, \C S\}.
\]
 By Proposition \ref{p:span}, we are looking for $\sigma\in\Irr W$ such that 
 \[\sigma\otimes\C S=a'(\wti\sigma_{\mathsf{sr}}+\wti\sigma_{\mathsf{sr}}\otimes\sgn)+b' \C S,\]
 for some $a',b'\in\bZ_{\ge 0}$. Equivalently, this means that there exist $a,b\in \bQ$ such that 
 \[\sigma\otimes\C S=a~ \refl\otimes\C S+ b ~\C S,
 \]
 and furthermore
 \begin{equation}\label{e:refl}
 \sigma(w)= a ~\refl(w)+ b,\text{ for all (-1)-elliptic } w\in W. 
 \end{equation}
If $\fg^\vee$ is simply laced of rank at least $4$ and $\fg^\vee\neq \mathfrak{so}(8,\bC)$, there exists a parabolic subgroup $S_5$ in $W$. Let $g_3$, $g_5$ be a $3$-cycle and a $5$-cycle in $S_5$, respectively. These are $(-1)$-elliptic elements. Using the character of the reflection representation, we see
\[\sigma(1)-\sigma(g_3)=3a\text{ and }\sigma(1)-\sigma(g_5)=5a.
\]
Restrict $\sigma$ to $S_5$: $\sigma=\sum_{\lambda\in P(5)} m_\lambda \sigma_\lambda$, where $\sigma_\lambda$ is the $S_5$-type parameterized via Young diagrams by the partition $\lambda$. Using the character table of $S_5$ which I will not reproduce here (see \cite[p. 28]{FH} for example), we find
\begin{align*}
3a=\sigma(1)-\sigma(g_3)&=3(m_{(41)}+m_{(21^3)})+6 (m_{(311)}+m_{(32)}+m_{(221)}),\\
5a=\sigma(1)-\sigma(g_5)&=5(m_{(41)}+m_{(21^3)}+m_{(311)}+m_{(32)}+m_{(221)}),
\end{align*}
hence $m_{(311)}=m_{(32)}=m_{(221)}=0$ and $a=m_{(41)}+m_{(21^3)}$.
This means that $\sigma$ can only contain upon restriction to $S_5$, the representations $\sigma_{(5)}$, $\sigma_{(1^5)}$, $\sigma_{(41)}$, and $\sigma_{(21^3)}$.

Now, suppose $\fg^\vee$ is of type $B$, $C$, or $D$. Then the claims follow easily from Lemma \ref{l:spin-B}. The difference between types $C$ and $B/D$ is the following: if $\fg^\vee=\mathfrak{sp}(2n)$, then 
\[
\Theta^{-1}(\CO^\vee_\sr)=\{((1,n-1)\times\emptyset)\otimes \C S,\ ((1^n)\times\emptyset)\otimes\C S\},
\]
but if $\fg^\vee=\mathfrak{so}(m)$, then
\[
\Theta^{-1}(\CO^\vee_\sr)=\{((1,\lfloor m/2\rfloor-1)\times\emptyset)\otimes \C S\}.
\]

For $F_4$, we use the fact that $W(F_4)$ contains $W(B_4)$ as the Weyl group of a quasi-Levi subgroup. From (\ref{e:refl}), we see that $\sigma\in \Irr W(F_4)$ is good only if all of the irreducible constituents of its restriction to $W(B_4)$ are good for $B_4$. Using Alvis' restriction tables, one can  see that the only representations of $W(F_4)$ that satisfy this condition are the ones in the statement of the lemma.

\end{proof}

\begin{proposition}\label{p:W-subsub} Suppose that $\fg^\vee$ has rank at least $4$ and it is not a symplectic or special even orthogonal Lie algebra. Let $\CO_\ssr^\vee$ be the sub-subregular nilpotent orbit in $\fg^\vee$. Explicitly, $\CO_\ssr^\vee$ is $(n-2,2)$ in $\mathfrak{sl}(n)$, $(2n-3,3,1)$ in $\mathfrak{so}(2n+1)$, $F_4(a_2)$ in $F_4$, $D_5$ in $E_6$, $E_7(a_2)$ in $E_7$, or $E_8(a_2)$ in $E_8$. Then $\Good(\CO^\vee_\ssr)\setminus \Good(\CO^\vee_\sr)$ is formed of  the following $W$-types:
\begin{enumerate}
\item[$\mathbf{A_{n-1}}$:] $\sigma_{(n-2,2)}$, $\sigma_{(n-2,1,1)}$, $\sigma_{(3,1^{n-3})}$, $\sigma_{(2,2,1^{n-4})}$, and when $n=6$, also $\sigma_{(33)}$ and $\sigma_{(222)}$;
\item[$\mathbf{B_n}$:] $(n-2)\times (2)$, $(n-2)\times (11)$, $(n-2,1)\times (1)$, $(n-2,2)\times\emptyset$, $(n-2,1,1)\times\emptyset$, $(2)\times (1^{n-2})$, $(11)\times (1^{n-2})$, $(1)\times (2,1^{n-3})$, $\emptyset\times (2,2,1^{n-4})$, $\emptyset\times (3,1^{n-3})$; 
\item[$\mathbf{F_4}$:] $\phi_{9,2}$, $\phi_{9,10}$, $\phi''_{8,3}$, $\phi'_{8,3}$, $\phi_{8,9}'$, $\phi_{8,9}''$, $\phi_{2,4}''$, $\phi_{2,16}'$;
\item[$\mathbf{E_6}$:] $\phi_{20,2}$, $\phi_{20,20}$;
\item[$\mathbf{E_7}$:] $\phi_{27,2}$, $\phi_{27,37}$, $\phi_{21,3}$, $\phi_{21,36}$;
\item[$\mathbf{E_8}$:] $\phi_{35,2}$, $\phi_{35,74}$.
\end{enumerate}
\end{proposition}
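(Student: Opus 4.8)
The plan is to deduce the whole statement from Proposition~\ref{p:span}. Since $\CO^\vee_\ssr$ is a large orbit --- in particular it is not the orbit $A_4+A_1$ of $E_7$, and the even special orthogonal case is excluded --- Proposition~\ref{p:span} applies with $\CO^\vee=\CO^\vee_\ssr$ and converts membership in $\Good(\CO^\vee_\ssr)$ into a finite linear-algebra test: writing an irreducible $W$-representation as $\sigma=\sigma(\CO_0^\vee,\C E_0)$ and expanding $[\sigma]_{-1}=\sum_{\CO'^\vee\subseteq\overline{\CO_0^\vee}\cap\C N^\vee_\sol}Q(-1)_{(\C E_0,\C E')}\,[X_{-1}(\CO'^\vee,\C E')]_{-1}$, one has $\sigma\in\Good(\CO^\vee_\ssr)$ if and only if $Q(-1)_{(\C E_0,\C E')}=0$ whenever $|h^\vee_{\CO'^\vee}|<|h^\vee_\ssr|$, because the classes $[X_{-1}(\CO'^\vee,\C E')]_{-1}$ with $\CO'^\vee\in\C N^\vee_\sol$ are linearly independent in $\overline R(W)_{-1}$ under the present hypotheses. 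Together with the analogous description of $\Good(\CO^\vee_\sr)$ underlying Lemma~\ref{l:subreg}, this shows that $\sigma$ lies in the difference $\Good(\CO^\vee_\ssr)\setminus\Good(\CO^\vee_\sr)$ precisely when, in addition, $[\sigma]_{-1}$ has a nonzero component along some $[X_{-1}(\CO^\vee_\ssr,\C E')]_{-1}$. So the proposition becomes a type-by-type identification of the irreducible $W$-representations with this property.

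The first step is to pin down the orbits of $\C N^\vee_\sol$ of norm at least $|h^\vee_\ssr|$. For classical $\fg^\vee$, $(h^\vee_{\CO^\vee},h^\vee_{\CO^\vee})$ is a fixed scalar times $\bigl(\sum_i\mu_i^3\bigr)$ minus a constant, where $\mu$ is the partition labelling $\CO^\vee$; by convexity of $t\mapsto t^3$ this is strictly monotone along the closure order, so the list is read off the combinatorial parametrisation of $\C N^\vee_\sol$ directly. For the exceptional types I would use the weighted Dynkin diagrams from the Bala--Carter tables (e.g.\ \cite{Ca}). In every case the orbits of norm $\ge|h^\vee_\ssr|$ turn out to be exactly the regular, subregular and sub-subregular orbits $\CO^\vee_{\mathsf r},\CO^\vee_\sr,\CO^\vee_\ssr$, and along the way one checks that the partition (resp.\ Bala--Carter label) recorded in the statement for $\CO^\vee_\ssr$ does lie in $\C N^\vee_\sol$. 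For $\fg^\vee=\mathfrak{so}(2n+1)$ I would also note that the genuine irreducible $\wti W(B_n)$-representations $(\lambda\times\emptyset)\otimes\C S$ whose $\Theta$-image has norm $\ge|h^\vee_\ssr|$ are precisely those with $\lambda_1\ge n-2$.

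Next, for the classical types I would turn the test into a question about Kostka and Littlewood--Richardson coefficients. In type $B_n$, Lemma~\ref{l:spin-B} shows that $\al\times\beta$ is good for $\CO^\vee_\ssr$ if and only if every constituent $\sigma_\lambda$ of $\Ind_{S_{|\al|}\times S_{|\beta|}}^{S_n}(\sigma_\al\boxtimes\sigma_{\beta^T})$ has $\lambda_1\ge n-2$; since the smallest first row occurring in such an induced product equals $\max(\al_1,\ell(\beta))$ and $|\al|+|\beta|=n$, a short Pieri/Littlewood--Richardson inspection forces $\al$ or $\beta$ to be extreme and yields exactly the ten bipartitions listed once the six subregular-good types of Lemma~\ref{l:subreg} are deleted. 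In type $A_{n-1}$ I would use instead the $(-1)$-Kostka numbers $Q(-1)_{(\mu,\lambda)}$ of the Lemma following Theorem~\ref{t:CH}: $\sigma_\mu\in\Good(\CO^\vee_\ssr)$ if and only if $Q(-1)_{(\mu,\lambda)}=0$ for all $\lambda\in DP(n)$ with $\lambda_1\le n-3$, and the combinatorial evaluation of these numbers (cf.\ \cite{St},\cite{CH}) isolates the two $\sgn$-dual pairs $\{\sigma_{(n-2,2)},\sigma_{(2,2,1^{n-4})}\}$ and $\{\sigma_{(n-2,1,1)},\sigma_{(3,1^{n-3})}\}$, together --- when $n=6$, where $(3,2,1)$ is the only orbit of $DP(6)$ of norm $<|h^\vee_\ssr|$ --- with the extra $\sgn$-pair $\{\sigma_{(3,3)},\sigma_{(2,2,2)}\}$, whose survival must be confirmed by a direct $(-1)$-Kostka computation.

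For the exceptional types $F_4,E_6,E_7,E_8$ the computation is finite, and I would carry it out using the tabulated data: the Springer correspondence, the explicit map $\Theta$ of \cite{Ci} and \cite[Appendix A]{CH}, and the inverse Green matrix $Q(-1)$, so that for each irreducible $W$-representation one reads off the orbits occurring in $[\sigma]_{-1}$ and applies the norm test. More economically, one can first invoke $\Good(\CO^\vee_\ssr)\subseteq\Good(\CO^\vee_\sr)$ together with Lemma~\ref{l:subreg} to shrink to a handful of candidates, and for each candidate restrict to a maximal quasi-Levi subgroup (of type $B_4$ inside $F_4$, of type $A$ or $D$ inside the $E$'s): since $(-1)$-ellipticity passes to parabolic subgroups, a good $W$-type must restrict to good types for the image of $\CO^\vee_\ssr$ in the quasi-Levi, and Alvis' restriction tables then cut the list down to precisely the representations in the statement, each of which is verified to be good. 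The step I expect to be the main obstacle is exactly this bookkeeping: showing that the relevant coefficients vanish for the listed representations and for no others --- pinning down the anomalous pair in $A_5$, matching the ten bipartitions in type $B$, and, above all, correctly identifying the image of $\CO^\vee_\ssr$ in the quasi-Levi for $E_7$ and $E_8$, where a single misidentification would corrupt the entire list.
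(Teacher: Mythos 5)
Your overall strategy is the paper's: Proposition~\ref{p:span} reduces goodness for $\CO^\vee_\ssr$ to the vanishing of the $Q(-1)$-coefficients along the solvable orbits of norm smaller than $|h^\vee_{\CO^\vee_\ssr}|$; after checking that the only orbits in $\C N^\vee_\sol$ of norm at least $|h^\vee_{\CO^\vee_\ssr}|$ are $\CO^\vee_{\mathsf r},\CO^\vee_\sr,\CO^\vee_\ssr$, the problem is settled type by type. Your treatment of $B_n$ via Lemma~\ref{l:spin-B} (the relevant $\lambda$ are those with $\lambda_1\ge n-2$, and the minimal first row of a Littlewood--Richardson product is $\max(\al_1,\ell(\beta))$) is in substance the paper's argument, and the tabulated finite check you propose for $F_4,E_6,E_7,E_8$ is exactly what the paper does with chevie and Morris's character tables.

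Two points, however. First, a genuine error: in the exceptional cases you invoke the inclusion $\Good(\CO^\vee_\ssr)\subseteq\Good(\CO^\vee_\sr)$ to ``shrink to a handful of candidates''. The monotonicity goes the other way: $\CO^\vee_\ssr\subset\overline{\CO^\vee_\sr}$ gives $\Good(\CO^\vee_\sr)\subseteq\Good(\CO^\vee_\ssr)$ (larger orbits have fewer good types), and the set being computed, $\Good(\CO^\vee_\ssr)\setminus\Good(\CO^\vee_\sr)$, consists precisely of types that are \emph{not} subregular-good; so your proposed filter would discard exactly the representations in the statement. The related assertion that a good type must restrict to good types for ``the image of $\CO^\vee_\ssr$ in the quasi-Levi'' is also unjustified as stated: in the subregular case the paper's restriction argument rests on the explicit character identity (\ref{e:refl}), not on transferring an orbit to the quasi-Levi, and no analogous identity is available for $\CO^\vee_\ssr$ without further work. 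This only damages your shortcut, not your primary table-based route for the exceptional types. Second, for $A_{n-1}$ the decisive step --- showing, uniformly in $n$, that every $\sigma_\mu$ outside the stated list has $Q(-1)_{(\mu,\lambda)}\neq 0$ for some $\lambda\in DP(n)$ with $\lambda_1\le n-3$ --- is only asserted (``the combinatorial evaluation of these numbers isolates\dots''). The paper proves it by restricting to the parabolic subgroups $S_6$ and $S_7$, using the $(-1)$-elliptic elements $g_3,g_5,g_7$ to exclude the constituents $\sigma_{(3,2,1)}$, $\sigma_{(4,3)}$, $\sigma_{(4,1^3)}$ from $\sigma|_{S_6}$, $\sigma|_{S_7}$, and then running a shape analysis together with sign-duality to reduce to the listed partitions; some argument of this kind, or an actual nonvanishing lemma for the $(-1)$-Kostka numbers, is needed to close your type-$A$ case (including confirming the extra pair $\sigma_{(33)},\sigma_{(222)}$ at $n=6$, which you correctly flag).
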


\begin{proof}

The proof uses Proposition \ref{p:span}. For an irreducible representation $\sigma$ to be good for $\CO_\ssr^\vee$, we need $[\sigma]_{-1}$ to be in the span of $[X_{-1}(\CO^\vee,\C E)]_{-1}$, where $\CO^\vee=\CO^\vee_{\mathsf r},\CO^\vee_{\sr},\CO^\vee_{\ssr}$. I analyse the explicit cases.

$\mathbf{A_{n-1}}$. If $\fg^\vee=\mathfrak{sl}(n,\bC)$, then the three relevant orbits are in Jordan form parametrisation $(n)$, $(n-1,1)$, $(n-2,2)$. The condition is then equivalent with the existence of scalars $a,b,c$ such that 
\[\sigma(w)= a~ \sigma_{(n)}(w)+b~ \sigma_{(n-1,1)}(w)+c~\sigma_{(n-2,2)}(w),\text{ for all }w\in S_n \text{ which are $(-1)$-elliptic}.
\]
Recall that an element $w$ which is $(-1)$-elliptic in a parabolic subgroup is $(-1)$-elliptic in $W$ too. Because of this, we use restrictions to smaller $S_\ell$'s. I claim that every such $\sigma\in \Irr S_n$, $n\ge 7$, has to satisfy two conditions:
\begin{enumerate}
\item[(a)] $\Hom_{S_6}[\sigma_{(3,2,1)},\sigma|_{S_6}]=0,$ and
\item[(b)] $\Hom_{S_7}[\sigma_{(4,3)},\sigma|_{S_7}]=0=\Hom_{S_7}[\sigma_{(4,1^3)},\sigma|_{S_7}]$.
\end{enumerate} 
I present the proof for (b), the one for (a) is the same, but simpler. In $S_7$, one can use the $(-1)$-elliptic elements $1$, $g_3$, $g_5$, and $g_7$ (the $7$-cycle). Using characters, it is elementary to see that in $S_7$, an irreducible representation $\mu$ is a linear combination of $\sigma_{(7)}$, $\sigma_{(6,1)}$, and $\sigma_{(5,2)}$ if and only if the system
\begin{align*}
&3(b+4c)=\mu(1)-\mu(g_3)\\
&5(b+3c)=\mu(1)-\mu(g_5)\\
&7(b+2c)=\mu(1)-\mu(g_7)\\
\end{align*}
has solutions $(b,c)$. When $\mu=\sigma_{(4,3)}$, the right hand side vector is $(15,15,14)$ and for $\mu=\sigma_{(4,1^3)}$ it is $(18,20,21)$, which does not give solutions. 

Back to $S_n$, suppose that $\sigma_\lambda$, $\lambda$ a partition of $n$ viewed as a Young diagram satisfies (a) and (b), and that $n\ge 7$ (otherwise the claim is immediate). Assume first that $\lambda=(\lambda_1\ge \lambda_2\ge\lambda_3\ge\dots)$ has at least $3$ rows. Then because of (b) and $(4,3)$ and (a) and $(3,2,1)$, we see that $\lambda_2\le 2$. If $\lambda_2=2$, using (a), we need $\lambda_1=2$. Hence, the partition is of the form $(\underbrace{2,\dots,2}_\ell,\underbrace{1\dots,1}_{n-2\ell})$. The sign dual of $\sigma$ corresponds to $(n-\ell,\ell)$. Since $\sigma$ is good if and only if $\sigma\otimes\sgn$ is, applying (b), we see that $\ell\le 2$. This means that the only possibilities are $(n)$, $(n-1,1)$, $(n-2,2)$ and their transposes.

Now if $\lambda_2=1$, then $\lambda=(n-\ell,\underbrace{1,\dots,1}_\ell)$. By applying (b) with $(4,1^3)$, we see that the only allowable partitions  have $\ell=0,1,2$, or $n-3,n-2,n-1$. The only new partitions that occur this way are $(n-2,1,1)$ and its transpose.
Finally if $\lambda$ has at most $2$ rows, using (b) with $(4,3)$, the only allowable partitions are $(n)$, $(n-1,1)$, and $(n-2,2)$.

It only remains to check that $(n-2,2)$ and $(n-1,1,1)$ are in $\Good((n-2,2))$ (then automatically their sign duals are as well, and the other allowable partitions are in $\Good((n-1,1))$ already). But this is immediate from Proposition \ref{p:span}, since $\CO^\vee_{\ssr}=(n-2,2)$ is the smallest orbit in $\C N^\vee_\sol$ whose closure contains that corresponding Springer orbits of $(n-2,2)$ and $(n-1,1,1)$.  

\

$\mathbf {B_n}$. While it is possible to prove the statement using the same technique as for $A_{n-1}$, it is easier to use Lemma \ref{l:spin-B}. The sub-subregular orbit is $(2n-3,3,1)$ and there are two local systems $\C E_0$ (the trivial) and $\C E_1$ (the sign) that enter in the Springer correspondence. The corresponding spin representations are
\[X_{-1}(\CO^\vee_\ssr,\C E_0)=((n-2,1,1)\times\emptyset)\otimes \C S\text{ and } X_{-1}(\CO^\vee_\ssr,\C E_1)=((n-2,2)\times\emptyset)\otimes \C S.
\]
Thus Proposition \ref{p:span} says that an irreducible $W(B_n)$-representation $\al\times\beta$ is good for $\CO^\vee_\ssr$ if and only if $\Ind_{S_{|\al|}\times S_{|\beta|}}^{S_n}(\sigma_\alpha\boxtimes\sigma_\beta)$ contains only $S_n$-types from the list
\[\sigma_{(n)},\ \sigma_{(1^n)},\ \sigma_{(n-1,1)},\ \sigma_{(2,1^{n-2})},\ \sigma_{(n-2,2)},\ \sigma_{(2,2,1^{n-4})},\ \sigma_{(n-2,1,1)},\ \sigma_{(3,1^{n-3})}.
\]
It is easy to see that the only possibilities are the ones listed in the statement of the lemma.

\

For exceptional groups, I verified the claim using Gap3 and the package chevie \cite{Mi}, and the character tables for the pin double covers of the exceptional Weyl groups as computed by Morris \cite{Mo-exc}.

\end{proof}

\begin{remark}\label{r:subsub}As a consequence of the proof, we see that for each of the $W$-type $\sigma$ listed in Proposition \ref{p:W-subsub}, $\Hom_{\wti W}[\sigma\otimes \C S,\wti\sigma]\neq 0$ for some $\wti\sigma\in \Theta^{-1}(\CO^\vee_\ssr)$.
\end{remark}

\section{Unitarity results}\label{s:5}
\subsection{}The Lie algebra $\fg^\vee$ is always assumed to be simple. 

\begin{lemma}\label{l:tensor-refl}
Suppose $\overline V$ is a simple $\bH$-module and let $\sigma\in \widehat W$ with $\Hom_W[\sigma,\overline V]\neq 0$. Then either $\overline V|_W=\sigma$ or there exists $\sigma'\in\widehat W$ such that $\Hom_W[\sigma',\sigma\otimes\refl]\neq 0$ and $\Hom_W[\sigma',\overline V]\neq 0$.
\end{lemma}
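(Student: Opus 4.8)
The plan is to exploit the module structure of $\overline V$ over the graded Hecke algebra $\bH$, specifically the interaction between the $W$-action and the action of $\mathfrak t$. Suppose $\overline V|_W \neq \sigma$, so that $\overline V$ contains a $W$-type $\sigma'$ distinct from $\sigma$ (or a second copy of $\sigma$); I must produce such a $\sigma'$ with $\Hom_W[\sigma', \sigma\otimes\refl]\neq 0$. The key structural fact is that for $\omega\in\mathfrak t$, the operator ``act by $\omega$'' on $\overline V$ does not commute with $W$, but the commutation relations $s_\alpha\omega - s_\alpha(\omega) s_\alpha = \langle\omega,\alpha\rangle$ show that the failure is ``degree zero'' in a suitable sense. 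Concretely, $\mathfrak t$ together with $\bC[W]$ generates all of $\bH$, and since $\overline V$ is simple, $\bH$ acts irreducibly; hence the $W$-submodule generated by $\sigma$ under the action of $\mathfrak t\cdot\bC[W]$ is all of $\overline V$. Therefore, if $\overline V \neq \sigma$ as a $W$-module, applying a single element $\omega\in\mathfrak t$ to the $\sigma$-isotypic component must produce something outside $\sigma$.

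The main step is then to identify, as a $W$-representation, the space $\mathfrak t\cdot \sigma$-isotypic-component inside $\overline V$. For this I would use that $\mathfrak t$, as a $W$-representation, is precisely the reflection representation $\refl$ (by semisimplicity/simplicity of $\fg^\vee$, $\mathfrak t = X_*(T)\otimes\bC$ carries the reflection representation). Consider the linear map $\mathfrak t \otimes \overline V_\sigma \to \overline V$, $\omega\otimes v\mapsto \omega\cdot v$, where $\overline V_\sigma$ denotes the $\sigma$-isotypic subspace. This map is $W$-equivariant once we account for the commutation relations — more precisely, the relations $s_\alpha\omega = s_\alpha(\omega)s_\alpha + \langle\omega,\alpha\rangle$ say that on the associated graded level (filtration by degree in $\mathfrak t$), $\mathfrak t$ and $W$ commute in the expected way, so the top-degree-one part of the map $\mathfrak t\otimes\overline V_\sigma\to\overline V/(\text{lower})$ is $W$-equivariant with source $\refl\otimes\sigma$. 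Since $\overline V$ is simple and not equal to $\sigma$, this map is nonzero into a complement of $\overline V_\sigma$, so its image contains some irreducible $\sigma'$ with $\Hom_W[\sigma',\refl\otimes\sigma]\neq 0$ and $\Hom_W[\sigma',\overline V]\neq 0$. I would phrase the filtration argument carefully: let $\overline V^{(k)}$ be the span of $\mathfrak t^{\le k}\cdot \overline V_\sigma$; then $\overline V^{(0)}=\overline V_\sigma$, $\bigcup_k \overline V^{(k)}=\overline V$ by simplicity, and if $\overline V\neq\overline V_\sigma$ then $\overline V^{(1)}\supsetneq\overline V^{(0)}$, with $\overline V^{(1)}/\overline V^{(0)}$ a $W$-quotient of $\refl\otimes\sigma$.

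I expect the main obstacle to be making the $W$-equivariance of the multiplication map precise despite the noncommutativity: one must check that the quotient $\overline V^{(1)}/\overline V^{(0)}$ genuinely receives a $W$-equivariant surjection from $\refl\otimes \overline V_\sigma$ (equivalently $\refl\otimes\sigma^{\oplus m}$), which uses that the ``correction terms'' $\langle\omega,\alpha\rangle s_\alpha$ land in $\overline V^{(0)}$ and hence vanish in the quotient. A secondary point is handling the degenerate case where $W=\bZ/2$ (rank one), but the hypothesis that $\fg^\vee$ is simple of rank at least $2$ elsewhere, or a direct check, dispatches it; in fact the statement is trivially true when $\overline V|_W=\sigma$, and otherwise the argument above applies verbatim. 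Once the $W$-quotient $\overline V^{(1)}/\overline V^{(0)}$ is seen to be nonzero and a quotient of $\sigma\otimes\refl$, extracting the desired $\sigma'$ is immediate.
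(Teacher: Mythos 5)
Your core mechanism is sound and is essentially a filtration version of the paper's argument. The paper avoids the associated-graded step by replacing each $\omega\in\fk t$ with $\wti\omega=\tfrac12(\omega-\omega^*)$: since $\omega+\omega^*\in\bC[W]$, conjugation by $W$ acts on the span of the $\wti\omega$ exactly by $\refl$, so the map $\wti\omega\otimes v\mapsto\wti\omega\cdot v$ is $W$-equivariant on the nose and one reads off the conclusion from a single vector $v'=\wti\omega\cdot v$. Your route instead keeps the raw $\omega$'s, pushes the correction terms $\langle\omega,\alpha\rangle$ into $\overline V^{(0)}$, and works in $\overline V^{(1)}/\overline V^{(0)}$; your equivariance check for that quotient is correct, and this is a legitimate, slightly more elementary alternative (it never invokes the $*$-operation).

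There is, however, a genuine gap in your case analysis: ``either $\overline V|_W=\sigma$ or $\overline V\neq\overline V_\sigma$'' is not a dichotomy. If $\overline V$ coincides with its $\sigma$-isotypic component but with multiplicity $m\ge 2$, then $\overline V^{(0)}=\overline V_\sigma=\overline V$, your filtration collapses at degree zero and produces no $\sigma'$, while the first alternative of the lemma ($\overline V|_W=\sigma$, multiplicity one) has not been established; the second alternative would then force $\Hom_W[\sigma,\sigma\otimes\refl]\neq 0$, which can fail (e.g.\ for one-dimensional $\sigma$). The repair is easy and brings you close to the paper's proof, which works with a single cyclic vector: base the filtration at one irreducible $W$-submodule $U_0\cong\sigma$ of $\overline V_\sigma$ rather than at the whole isotypic component. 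If $\fk t\cdot U_0\subset U_0$, then $U_0$ is $W$- and $\fk t$-stable, hence an $\bH$-submodule, so simplicity gives $\overline V=U_0$ and $\overline V|_W=\sigma$ (this also yields the multiplicity-one part of the first alternative, which your version never addresses). Otherwise $U_0+\fk t\cdot U_0\supsetneq U_0$, and your degree-one subquotient is a nonzero $W$-quotient of $\refl\otimes\sigma$, whose constituents occur in $\overline V$ by semisimplicity of $\bC[W]$, producing the desired $\sigma'$.
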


\begin{proof}
For every $\omega\in \fk t\subset \bH$, denote $\wti\omega=\frac 12 (\omega-\omega^*)$. The assignment $\omega\mapsto \wti\omega$ is a linear map $\fk t\to\bH$. Denote its image by $\wti{\fk t}$. As it is well known (e.g., \cite{BM2}, \cite{Op}), conjugation by $W$ defines a copy of the reflection representation on $\wti{\fk t}$. Let $v\neq 0$ be a vector in the $\sigma$-isotypic component in $\overline V$. If $\wti{\fk t}\cdot v=0$, then $\sigma$ can be lifted to a $\bH$-submodule of $\overline V$ and because of the simplicity of $\overline V$, we must have $\overline V|_W=\sigma$. Otherwise, there exists $\omega$ such that $v':=\wti\omega\cdot v\neq 0$. Since $v'$ belongs to $\sigma\otimes\refl$, this proves the claim.  
\end{proof}

The Iwahori-Matsumoto involution is defined on the generators of the graded affine Hecke algebra:
\begin{equation}
\mathsf{IM}(s_\al)=-s_\al,\ \mathsf{IM}(\omega)=-\omega,\quad \al\in \Pi,\ \omega\in\fk t.
\end{equation}
This induces an involution on the set of irreducible $\bH$-modules which maps modules with central character $W\cdot\nu$ to modules with central character $W\cdot (-\nu)$. As it is known, for every nilpotent orbit $\CO^\vee$, $h^\vee$ and $-h^\vee$ are $W$-conjugate, which means that the involution preserves the central characters $W\cdot h^\vee/2$ that considered here. Moreover, at the level of the restriction to $W$ of a $\bH$-module, $\mathsf{IM}$ corresponds to tensoring with the $\sgn$ representation. 

\begin{theorem}\label{t:unitary} Let $\overline V=\overline V(\C C,\C L)$ be a simple unitary $\bH$-module with central character $\nu$ such that $|\nu|=|h^\vee/2|$ for a nilpotent orbit $\CO^\vee$.
\begin{enumerate}
\item If $\CO^\vee=\CO^\vee_{\mathsf{r}}$, then $\nu\in W\cdot h_{\mathsf{r}}^\vee/2$ and $V$ is the trivial or the Steinberg $\bH$-module.
\item If $\rk(\fg^\vee)\ge 2$ and $\CO^\vee=\CO^\vee_{\mathsf{sr}}$, then $\nu\in W\cdot h_{\mathsf{sr}}^\vee/2$. In addition, 
\begin{enumerate}
\item if $\rk(\fg^\vee)=2$, then every simple module with this central character is unitary; 
\item if $\fg^\vee$ is $\mathfrak{sl}(4)$ or $\mathfrak{sp}(6)$, the unitary modules are given in Tables \ref{A3} and \ref{C3}.
\item if $\fg^\vee=\mathfrak{so}(7)$ or $\rk(\fg^\vee)\ge 4$, then the only unitary modules are the tempered modules $\overline V(\C C,\C L)$, $\C C\subset \CO^\vee_{\mathsf{sr}}$, and their Iwahori-Matsumoto duals, and additionally, only in the case of ${C_n}$, $G^\vee\cdot \C C=(2n-2,1,1)$  and $\C L$ trivial (where $\overline V(\C C,\C L)$ is an endpoint of complementary series), or the Iwahori-Matsumoto dual of this module.

\end{enumerate}
\item  Suppose $\fg^\vee$ is not a symplectic Lie algebra or an even orthogonal Lie algebra and that $\rk(\fg^\vee)\ge 4$. If $\CO^\vee=\CO^\vee_{\mathsf{ssr}}$ then $\nu\in W\cdot h_{\mathsf{ssr}}^\vee/2$. In addition, if $\overline V=\overline V(\C C,\C L)$ has central character $W\cdot h_{\mathsf{ssr}}^\vee/2$, then it is unitary if and only if it is a tempered module with $\C C\subset \CO^\vee_{\mathsf{ssr}}$ or its Iwahori-Matsumoto dual or:
\medskip
\begin{enumerate}
\item[$\mathbf{A_{n-1}}$:] $G^\vee\cdot \C C=(n-2,1,1)$ and $\C L$ trivial ($\overline V$ is an endpoint of complementary series), or the Iwahori-Matsumoto dual of this module, when $n\neq 6$. When $n=6$, in addition, the modules corresponding to  $G^\vee\cdot \C C=(3,3)$ or $(2,2,2)$ and $\C L$ trvial (endpoints of complementary series) are unitary. 
\item[$\mathbf{B_n}$:] $G^\vee\cdot \C C=(2n-3,2,2)$ and $\C L$ trivial, or $G^\vee\cdot \C C=(2n-3,1^4)$ and $\C L$ trivial or sign ($\overline V$ is an endpoint of complementary series in these cases), or the Iwahori-Matsumoto duals of these modules.
\item[$\mathbf{F_4}$:] $G^\vee\cdot \C C=B_3$ or $C_3$ and $\C L$ trivial ($\overline V$ is an endpoint of complementary series), or the Iwahori-Matsumoto dual of these modules.
\item[$\mathbf{E_7}$:] $G^\vee\cdot \C C=E_6$ and $\C L$ trivial ($\overline V$ is an endpoint of complementary series), or the Iwahori-Matsumoto dual of this module.
\end{enumerate}
\end{enumerate}
\end{theorem}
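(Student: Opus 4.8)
The plan is to combine the non-unitarity certificates of Section 4 with explicit knowledge of the small list of relevant simple $\bH$-modules at the parameters $\nu = h^\vee_\mathsf{r}/2$, $h^\vee_\sr/2$, $h^\vee_\ssr/2$. The overall logic is the same for all three parts: first use Theorem \ref{dirac-ineq} together with Corollary \ref{c:good} to cut down drastically the set of candidates, then verify unitarity of the survivors by hand. For part (1), Lemma \ref{l:reg} gives $\Good(\CO^\vee_\mathsf{r}) = \{\triv, \sgn\}$, so any unitary $\overline V$ with $|\nu| = |h^\vee_\mathsf{r}/2|$ must restrict to $W$ as a sum of copies of $\triv$ and $\sgn$; since $\overline V$ is simple and these are one-dimensional, $\overline V|_W$ is either $\triv$ or $\sgn$, forcing $\overline V$ to be the trivial or Steinberg module, and the second clause of Theorem \ref{dirac-ineq} then forces $\nu \in W \cdot h^\vee_\mathsf{r}/2$. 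Both modules are genuinely unitary (one-dimensional, or by Casselman/the Steinberg being square-integrable), so (1) follows.

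For part (2), the rank-$2$ case is Lemma \ref{l:subreg}(1): the subregular orbit is the smallest orbit in $\C N^\vee_\sol$, every $W$-type is good, and the Dirac inequality is vacuous; but one must still observe directly (from the known classification in rank $2$, i.e.\ $A_2$, $B_2$, $G_2$) that all simple modules with central character $h^\vee_\sr/2$ are unitary — this I would cite from the explicit calculations in \cite{Ci} or verify by hand. For $A_3$ and $C_3$ the same "smallest orbit" remark applies, and one reads off the unitary ones from Tables \ref{A3}, \ref{C3}. The substantive case is $\fg^\vee = \mathfrak{so}(7)$ or $\rk(\fg^\vee) \ge 4$: here Lemma \ref{l:subreg}(2) gives the short explicit list $\Good(\CO^\vee_\sr)$. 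The strategy is: (i) list the simple modules $\overline V(\C C, \C L)$ with central character $W\cdot h^\vee_\sr/2$ — there are exponentially many, but their $W$-structures are governed by Kazhdan--Lusztig/Springer theory; (ii) discard all those whose restriction to $W$ meets $\Irr W \setminus \Good(\CO^\vee_\sr)$, via Corollary \ref{c:good}; (iii) for the survivors, the tempered ones are unitary by Kazhdan--Lusztig, their $\mathsf{IM}$-duals are unitary because $\mathsf{IM}$ preserves unitarity; (iv) the one genuinely new module in type $C_n$ (orbit $(2n-2,1,1)$, trivial local system) is shown unitary by exhibiting it as an endpoint of a complementary series — i.e.\ a deformation argument along a line of parameters, using that the signature is constant on the open complementary series and the module stays irreducible and hermitian up to the endpoint. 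For modules that survive (ii) but are not on the final list, I would produce an explicit non-unitarity certificate by hand using Lemma \ref{l:tensor-refl}: if $\overline V|_W \neq \sigma$ then $\overline V$ contains some $\sigma'$ occurring in $\sigma \otimes \refl$, and one chases this until a bad $W$-type appears, or else uses a direct signature computation on a small $W$-type.

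Part (3) follows the identical template with $\Good(\CO^\vee_\ssr)$ computed in Proposition \ref{p:W-subsub} and Remark \ref{r:subsub}, and with the arithmetic identity $|\nu|^2 = |h'^\vee|^2/4 + |\nu_z|^2$ from \eqref{e:lengths} controlling which orbits $\CO'^\vee$ can carry a module at this length: only $\C C$ with $\CO'^\vee \supseteq \CO^\vee_\ssr$ up to the "$\nu_z = 0$" degenerations survive once $|\nu| = |h^\vee_\ssr/2|$. Again the tempered modules and their $\mathsf{IM}$-duals are automatically unitary, and the finitely many extra modules listed for $A_{n-1}$, $B_n$, $F_4$, $E_7$ are each identified as endpoints of complementary series and handled by a deformation/signature argument; the exceptional-group claims I would settle by the Gap3/chevie computation already used in the proof of Proposition \ref{p:W-subsub}, combined with the explicit unitary-dual data available in the literature (\cite{Ci}, \cite{Ci-F4}, \cite{BC-E8}). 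The special behaviour at $n=6$ in type $A$ is exactly the appearance of the extra good $W$-types $\sigma_{(33)}$, $\sigma_{(222)}$ noted in Proposition \ref{p:W-subsub}, which admit two more endpoint-of-complementary-series modules.

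The main obstacle is step (iv) in each part: once the Dirac inequality has eliminated the overwhelming majority of candidates, proving that the handful of surviving non-tempered modules \emph{are} unitary is not something the Dirac inequality can do, and proving that the ones \emph{not} on the list are \emph{not} unitary requires a supplementary certificate (a signature computation on a well-chosen $W$-type, or an appeal to the known unitary dual in low rank and exceptional cases). Concretely, identifying each extra module as "an endpoint of complementary series" means producing a one-parameter family $\overline V(\nu_t)$, $t \in [0,1]$, that is irreducible and $*$-hermitian for $t < 1$, is known to be unitary on an open subinterval (e.g.\ near a tempered or unitarily induced point), and degenerates to the module in question at $t=1$; assembling these families uniformly across the classical series, and doing the exceptional cases by machine, is where the real work lies.
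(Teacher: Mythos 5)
Your proposal follows essentially the same route as the paper: the Dirac inequality together with the computation of $\Good(\CO^\vee)$ to cut down the candidates, Lemma \ref{l:tensor-refl} (chains of $W$-types) to eliminate the remaining bad combinations, unitarity of tempered modules and their Iwahori--Matsumoto duals, endpoint-of-complementary-series deformations for the few extra modules, and explicit low-rank/exceptional verifications from the literature and machine computation. One small correction to part (1): simplicity of $\overline V$ over $\bH$ does not by itself force $\overline V|_W$ to be a single one-dimensional $W$-type (a priori a simple module could contain both $\triv$ and $\sgn$); the paper closes this exactly with Lemma \ref{l:tensor-refl} --- if $\overline V|_W\neq \triv$ then $\overline V$ must also contain a constituent of $\triv\otimes\refl=\refl$, which is not in $\Good(\CO^\vee_{\mathsf r})$ --- which is the same tool you invoke in part (2), so the fix is immediate.
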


\begin{proof}
(1) By Lemma \ref{l:reg}, $\overline V|_W$ can only contain $\triv$ or $\sgn$. If $\rk(\fg^\vee)=1$, the only simple $\bH$-modules with central character of length $|h^\vee/2|$ are the trivial and the Steinberg, so there is nothing to prove. If $\rk(\fg^\vee)\ge 2$, Lemma \ref{l:tensor-refl} implies that $\overline V|_W=\triv$ or $\overline V|_W=\sgn$ and the claim follows.

\

(2) Since every irreducible $W$-representation $\sigma\in\Good(\CO^\vee_\sr)$, other than $\triv$ and $\sgn$, satisfies $\Hom_{\wti W}[\wti\sigma, \sigma\otimes \C S]\neq 0$ for some $\wti\sigma\in \Theta^{-1}(\CO^\vee_\sr)$, Theorem \ref{dirac-ineq} implies that the central character $\nu$ must be in $W\cdot h^\vee_\sr/2$.

If $\rk(\fg^\vee)=2$, then the possible Hecke algebras are of types $A_2$, $B_2$, $C_2$, or $G_2$. In all of these cases, one can see from the classification of the unitary duals (in the setting of the Hecke algebra $\bH$ this is not difficult for small rank root systems and it can be found in \cite{Ba}, \cite{Ci-F4}), that $h^\vee_\sr/2$ is at the boundary of the complementary series of the spherical principal series, hence all subquotients are unitary. 

If $\rk(\fg^\vee)=3$, we have the classical cases $A_3$, $B_3$, $C_3$. In the case of $A_3$, the nilpotent orbits are labelled by $(4)$, $(31)$, $(22)$, $(211)$, $(1^4)$. The subregular orbit is $(31)$ and with the standard coordinates $h^\vee_\sr/2=(1,0,0,-1)$. The classification of simple $\bH(A_3)$-modules with this central character is well known, for example, it can be found in \cite[\S 4.2]{Ci}, see Table \ref{A3} below (all local systems $\C L$ are trivial so I drop them from notation).

\begin{table}[h]\centering
\begin{tabular}{|c|c|c|c|}
\hline
$\CO^\vee$  &Label &$W$-structure &Unitary?\\
\hline
$(31)$ &$4$ &$\sigma_{(211)}+\sigma_{(1^4)}$ &Yes (tempered)\\
\hline
$(22)$ &$3$ &$\sigma_{(22)}$ &Yes (endpoint complementary series)\\
\hline
$(211)$ &$2_a$ &$\sigma_{(31)}+\sigma_{(211)}$ &No (not hermitian)\\
\hline
$(211)$ &$2_b$ &$\sigma_{(31)}+\sigma_{(211)}$ &No (not hermitian)\\
\hline
$(1^4)$ &$0$ &$\sigma_{(4)}+\sigma_{(31)}$ &Yes (spherical)\\
\hline

\end{tabular}
\smallskip
\caption{Central character $h^\vee_{\sr}/2$ for $A_3$}\label{A3}

\end{table}

In the case of $C_3$, the subregular orbit is $(42)$ and, in the standard coordinates, $h^\vee_\sr/2=(\frac 32,\frac 12,\frac 12)$. The classification of simple $\bH(C_3)$ modules with this central characters is given in \cite[\S 4.4]{Ci}, from where also the $W$-structure can be deduced easily (since we know the $W$-structure of standard modules by Springer theory and the Kazhdan-Lusztig polynomials). To check the unitarity of the simple modules, one can use the same methods as in \cite{Ci-F4}, but in fact there is only one subtle case ($3_{b,t}$ or equivalently $2_a$), where one needs to employ a signature argument with ``lowest $W$-types". The results are in Table \ref{C3}.

\begin{table}[h]\centering
\begin{tabular}{|c|c|c|c|c|}
\hline
$\CO^\vee$ &$\C E$ &Label &$W$-structure &Unitary?\\
\hline
$(42)$ &$(2)$ &$5_t$ &$1\times 11+0\times 1^3$ &Yes (tempered)\\
           &$(11)$ &$5_s$ &$1^3\times 0$ &Yes (tempered)\\
          \hline
$(411)$ &$1$ &$4_b$ &$0\times 12$ &Yes\\
\hline
$(33)$ &$1$ &$4_a$ &$11\times 1$ &Yes\\
\hline
$(222)$ &$1$ &$3_a$ &$1\times 2$ &Yes\\
\hline
$(2211)$ &$(2)$ &$3_{b,t}$ &$2\times 1+1\times 2+0\times 12+1\times 11$ &No\\
                &$(11)$ &$3_{b,s}$ &$12\times 0$ &Yes\\
                \hline
$(2211)$ &$1$ &$2_a$ &$2\times 1+12\times 0+11\times 1+1\times 11$ &No\\
\hline
$(21^4)$ &$1$ &$2_b$ &$0\times 3$ &Yes (anti-tempered)\\
\hline
$(1^6)$ &$1$ &$0$ &$3\times 0+2\times 1$ &Yes (spherical anti-tempered)\\
\hline
\end{tabular}
\smallskip
\caption{Central character $h^\vee_{\sr}/2=(\frac 32,\frac 12,\frac 12)$ for $C_3$}\label{C3}

\end{table}

\medskip

(c) If $\fg^\vee=\mathfrak{so}(7)$, the only good $W$-types for the subregular orbit are $(3)\times \emptyset$, $\emptyset\times (1^3)$, $(2)\times (1)$, $(1)\times (1,1)$, $(1,2)\times\emptyset$, $\emptyset\times (1,2)$. There are two irreducible tempered modules with this central character, $\overline V((511),\triv)|_W=(1)\times (1,1)+\emptyset\times (1^3)$ and $\overline V((511),\sgn)|_W=\emptyset\times (1,2)$ which are unitary. Their Iwahori-Matsumoto duals $\overline V(0)|_W=(3)\times \emptyset+ (2)\times (1)$ and $\overline V((221^3))=(1,2)\times \emptyset$ are then also unitary. It is easy to check that these are the only modules which contain only good $W$-types.

 Suppose $\rk(\fg^\vee)\ge 4$. If $\fg^\vee$ is of type $A$ or $E$, by Lemma \ref{l:subreg}, $\overline V$ can only contain $\triv, \refl, \sgn, \refl\otimes\sgn$. If $\overline V$ contains $\triv$, since $\overline V$ can't be the trivial module, Lemma \ref{l:tensor-refl} implies that $\overline V$ also contains $\refl$. But then, by Theorem \ref{dirac-ineq}, the central character $\nu$ is in $W\cdot h^\vee_\sr/2$. By the classification of $\bH$-modules, we know that at central character $W\cdot h^\vee_\sr/2$, the spherical module has $W$-structure $\triv+\refl$. Hence $\overline V= \overline V(0,\triv)$, the irreducible spherical module. Its Iwahori-Matsumoto dual is the simple tempered module $\overline V(\C C,\triv)$, where $G^\vee\cdot \C C=\CO^\vee_\sr$, and they are both unitary. 

The only remaining case is if $\overline V$ contains only the $W$-types $\refl$ and $\refl\otimes\sgn$. Since $\Hom_{W}[\refl,\refl\otimes\sgn]=0$, Lemma \ref{l:tensor-refl} implies that the only possibilities are $\overline V|_W=\refl$ or $\overline V|_W=\refl\otimes\sgn$. Using the Iwahori-Matsumoto involution, it is sufficient to consider the second case. By the classification of $\bH$-modules, $\overline V$ would have to be parameterised by an orbit $\C C$ such that $\C C\subset \CO^\vee_\sr$. But at this central character, there exists only one such module, the tempered one considered above. This shows there are no other possible unitary modules.

Suppose $\fg^\vee$ is of type $B_n$ or $D_n$. Then in addition to $\triv, \refl, \sgn, \refl\otimes\sgn$, the only other good $W$-types are $(n-1,1)\times \emptyset$ and its sign-dual, $\emptyset\times (2,1^{n-2})$ for $B_n$, respectively $(2,1^{n-2})\times\emptyset$ for $D_n$. In both cases, there exists a unique (unitary) $\bH$ module $\overline V$ such that $\overline V|_W=((n-1,1)\times \emptyset)\otimes \sgn$. For $B_n$, this is a tempered module $\overline V=\overline V(\C C,\sgn)$, $\C C\subset \CO^\vee_\sr$. For $D_n$, this is $\overline V(\C C,\triv)$, where $\C C\subset (2n-3,1,1,1)$ and it is an endpoint of a complementary series By the Iwahori-Matsumoto duality, their duals are also unitary. Besides these unitary modules, the only candidates are the modules that can be formed with $\triv, \refl, \sgn, \refl\otimes\sgn$, for which the analysis is exactly as for types $A$ and $E$.

Suppose $\fg^\vee$ is of type $C_n$. The tempered modules with the subregular central character are $\overline V((2n-2,2),\triv)|_W=(1)\times (1^{n-1}) +\emptyset\times (1^n)$ and $\overline V((2n-2),\sgn)|_W=(1^n)\times\emptyset$. Their Iwahori-Matsumoto duals are also unitary and have $W$-structure $(n)\times\emptyset+(n-1)\times (1)$ and $\emptyset\times (n)$, respectively. In addition, $\overline V(\C C,\triv)|_W=\emptyset \times (2,1^{n-2})$, where $\C C\subset (2n-2,1,1)$ is also unitary and an endpoint of complementary series, and so is its Iwahori-Matsumoto dual. These are the only simple modules that can be formed with the good $W$-types at this central character. 

Suppose $\fg^\vee$ is of type $F_4$. The simple modules with central character $h^\vee_\sr/2$ are listed in \cite[\S 5.2.2]{Ci} and from there, one can deduce that the only simple unitary modules that can be formed with the good $W$-types are the tempered modules 
\[ [F_4(a_1),\triv]:\ \phi_{4,13}+\phi_{1,24},\quad [F_4(a_1),\sgn]:\ \phi_{2,16}'',
\]
and their Iwahori-Matsumoto duals
\[ [0,\triv]:\ \phi_{1,0}+\phi_{4,1},\quad [A_1,\triv]:\ \phi_{2,4}'.
\]

\

(3) We are interested only in the modules that can be formed with the good $W$-types. The strategy is the following: for each good $W$-type $\sigma$, list the possible simple modules that have that $W$-type as a ``lowest $W$-type", i.e., the simple modules $\overline V(\C C,\C L)$ with central character $h^\vee_\ssr/2$ such that 
\[
\Hom_W[\sigma,\overline V(\C C,\C L)]\neq 0\text{ and } \sigma=\sigma(\CO^\vee,\C E)\text{ where }\CO^\vee=G^\vee\cdot \C C, \text{ for some }\C E.
\]
Every such module has the property that all of the $W$-types that it contains are attached via the Springer correspondence to nilpotent orbits $\CO'^\vee$ such that $\CO'^\vee\subset \overline\CO^\vee$. Moreover, we necessarily need to have $|h^\vee_{\CO^\vee}|\le  |h^\vee_{\CO^\vee_\ssr}|$. In particular, this means that $\CO^\vee$ can't be the regular or the subregular orbit. Hence, we list all the good $W$-types that are allowable, in decreasing order corresponding to the nilpotent orbits in the Springer correspondence, and the corresponding simple $\bH$-modules. 

\smallskip

$\mathbf F_4$: we can use \cite[\S 5.2.3]{Ci} to check the classification and $W$-structure of the simple modules with central character $h^\vee_\ssr/2$ (where $\CO^\vee_\ssr=F_4(a_2)$). The only ones that consist of only good $W$-types are unitary (for the reasons stated in Theorem \ref{t:unitary}) and they are (the notation is as in \cite{Ci}):
\begin{equation}
  \begin{aligned}
    &[F_4(a_2),(2)]\ (8_t): \phi_{9,10}+\phi_{4,13}+\phi_{1,24} &[F_4(a_2),(11)]\ (8_s):\ \phi_{2,16}'\\
    &[C_3,\triv]\ (7_b): \phi_{8,9}''+\phi_{2,16}'' &[B_3,\triv] (7_a): \phi_{8,9}'\\
    &[\wti A_2,\triv] (5_a): \phi_{8,3}'' &[\wti A_1,(11)] (4_{b,s}): \phi_{2,4}''\\
    &[A_1,\triv] (3): \phi_{2,4}'+\phi_{8,3}' &[0,\triv] (0): \phi_{1,0}+\phi_{4,1}+\phi_{9,2}.
    \end{aligned}
  \end{equation}

\smallskip

$\mathbf E_6$: $\phi_{20,20}$ is the lowest $W$-type of the unique tempered module with $\C C\subset \CO^\vee_\ssr=D_5$, \[\overline V(D_5,\triv)|_W=\phi_{20,20}+\phi_{6,25}+\phi_{1,36}.\] The $\mathsf{IM}$-dual is the spherical module, which is also unitary. The only other possibilities would be to have a module with lowest $W$-type $\phi_{20,2}$ which corresponds to the nilpotent $2A_1$ via Springer's correspondence or $\phi_{6,1}$ which would correspond to the nilpotent $A_1$. Moreover, any such module would have to be self-dual (or else, its dual would have appeared higher in the list). By applying the idea in Lemma \ref{l:tensor-refl}, in the first case (the second is completely analogous), we see that such a module would have to contain a ``chain'' of $W$-types $\sigma_0=\phi_{20,2}$, $\sigma_1$,\dots, $\sigma_m=\phi_{20,20}$ such that $\sigma_{i+1}$ occurs in $\sigma_i\otimes\refl$ for all $1\le i\le m-1$. But any such chain would involve $W$-types that are not good for $\CO^\vee_\ssr$. This concludes the argument for $E_6$.

\smallskip

$\mathbf E_7$: the unique tempered module with central character $h^\vee_\ssr/2$ has $W$-structure $\phi_{27,37}+\phi_{7,46}+\phi_{1,63}$. Its dual is spherical (and unitary). In addition, the irreducible module $\overline V(\C C,\triv)$, $\C C\subset E_6$ is irreducible when restricted to $W$ and it has the $W$-type $\phi_{21,36}$. It is unitary and it is an endpoint of the complementary series for the induction of the Steinberg module from $E_6$ to $E_7$. Its dual is also unitary and it consists of a single $W$-type $\phi_{21,3}$. There are no other modules that can be formed with only good $W$-types, the argument being similar to that for $E_6$. 

\smallskip

$\mathbf E_8$: this case is completely identical to $E_6$.

\smallskip

$\mathbf A_{n-1}$: suppose that $n\ge 7$ or $n=5$. The irreducible tempered module with central character $h^\vee_\ssr/2$ is \[\overline V((n-2,2))|_{W}=\sigma_{(2,2,1^{n-4})}+\sigma_{(2,1^{n-2})}+\sigma_{(1^n)}.\]
 Its dual is the spherical module. The $W$-type $\sigma_{(3,1^{n-3})}$ is the lowest $W$-type of \[\overline V((n-2,1,1))|_W=\sigma_{(3,1^{n-3})}+\sigma_{(2,1^{n-2})},\] which is unitary and a limit of the complementary series for the induced from the Steinberg on the Levi subalgebra of type $A_{n-3}$. No other modules can be formed with only good $W$-types by the same argument as in $E_6$.

  If $n=6$, then in addition to the modules found above, there are also the modules $\overline V((3,3))|_W=\sigma_{(2,2,2)}$ and $\overline V((2,2,2))|_W=\sigma_{(3,3)}$, which are both unitary.

\smallskip

$\mathbf B_n$: The sub-subregular orbit is $(2n-3,3,1)$, $n\ge 4$, in the partition notation. There are two irreducible tempered modules (in fact, discrete series modules) attached to this orbit:
\[
[(2n-3,3,1),(2)]:\ (11)\times (1^{n-2})+(1)\times (1^{n-1})+\emptyset\times (1^n),\quad [(2n-3,3,1),(11)]: \emptyset \times (2,2,1^{n-4}).
\]
The corresponding $\mathsf{IM}$-duals are
\[[0]:\ (n)\times\emptyset+(n-1)\times (1)+(n-2)\times (2),\quad [(2^4,1^{2n-8})]:\ (n-2,2)\times\emptyset.
\]
The $W$-types $(1)\times (2,1^{n-3})$ is attached in the Springer correspondence to $(2n-3,2,2)$, while $(2)\times (1^{n-2})$ and $\emptyset\times (3,1^{n-3})$ are attached  $(2n-3,1^4)$. The corresponding simple modules are:
\[[(2n-3,2,2)]:\ (1)\times (2,1^{n-3})+\emptyset \times (2,1^{n-2}),\]
\[ [(2n-3,1^4),(2)]:\ (2)\times (1^{n-2})+(1)\times (1^{n-1}),\quad [(2n-3,1^4),(11)]: \emptyset\times (3,1^{n-3}).
  \]
  They are all unitary and endpoints of complementary series. Their $\mathsf{IM}$-duals are also unitary:
  \[ [(2,2,1^{2n-4})]:\ (n-1,1)\times\emptyset+(n-2,1)\times (1),\]
  \[[(3,1^{2n-2}),(2)]: (n-1)\times (1)+(n-2)\times (11),\quad [(3,3,1^{2n-5},(11)]=(n-2,2)\times \emptyset.
  \]
  The fact that these are the only simple modules that can be formed with the good $W$-types at this central character can be proved by a similar analysis to the other cases. 
  
\end{proof}

In light of Theorem \ref{t:unitary}, the only thing left in order to complete the proof of Theorem \ref{t:main}, is the following

\begin{proposition}\label{p:gaps} Let $\overline V$ be a simple $\mathbb H$-module with central character $\nu\in\mathfrak t_\bR^\vee$.
\begin{enumerate}
\item If $\overline V$ is unitary then $|\nu|\le |h_{\mathsf{r}}^\vee/2|$.
\item If  $\rk(\fg^\vee)\ge 2$, and $|h_{\mathsf{sr}}^\vee/2|<|\nu|<|h_{\mathsf{r}}^\vee/2|$, then $\overline V$ is not unitary.
\item Suppose $\fg^\vee$ is not a symplectic Lie algebra or an even orthogonal Lie algebra and that $\rk(\fg^\vee)\ge 4$. If $|h_{\mathsf{ssr}}^\vee/2|<|\nu|<|h_{\mathsf{sr}}^\vee/2|$, then $\overline V$ is not unitary.
\end{enumerate}
\end{proposition}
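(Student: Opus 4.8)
\emph{Plan of proof.} The strategy is to run Theorem~\ref{dirac-ineq} and Corollary~\ref{c:good} against the explicit descriptions of the sets $\Good(\CO^\vee)$ from Lemmas~\ref{l:reg},~\ref{l:subreg} and Proposition~\ref{p:W-subsub}, using the length identity \eqref{e:lengths} for the borderline cases. First record a geometric remark: since $\CO^\vee_{\mathsf r}$ is the open orbit of $\C N^\vee$, $\CO^\vee_\sr$ the open orbit of $\C N^\vee\setminus\CO^\vee_{\mathsf r}$, and $\CO^\vee_\ssr$ the open orbit of $\overline{\CO^\vee_\sr}\setminus\CO^\vee_\sr$, every nilpotent orbit other than $\CO^\vee_{\mathsf r}$ is contained in $\overline{\CO^\vee_\sr}$ and every orbit other than $\CO^\vee_{\mathsf r},\CO^\vee_\sr$ is contained in $\overline{\CO^\vee_\ssr}$; as $|h^\vee_{\CO^\vee}/2|$ is strictly increasing along the closure order, no nilpotent orbit has $|h^\vee/2|$ strictly between $|h^\vee_\sr/2|$ and $|h^\vee_{\mathsf r}/2|$, nor strictly between $|h^\vee_\ssr/2|$ and $|h^\vee_\sr/2|$. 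Unwinding the definition of $d$ and Lemmas~\ref{l:reg} and~\ref{l:subreg} this gives two elementary bounds: \textbf{(A)} $d(\sigma)\le|h^\vee_\sr/2|$ for every $\sigma\in\Irr W$ with $\sigma\neq\triv,\sgn$ (otherwise all orbits realising $d(\sigma)$ would have $|h^\vee/2|>|h^\vee_\sr/2|$, hence equal $\CO^\vee_{\mathsf r}$, forcing $\sigma\in\Good(\CO^\vee_{\mathsf r})=\{\triv,\sgn\}$); and \textbf{(B)} $d(\sigma)\le|h^\vee_\ssr/2|$ for every $\sigma\notin\Good(\CO^\vee_\sr)$, by the same argument.

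Part (1) is then immediate: the identity of $W$ is $(-1)$-elliptic, so $\dim(\overline V|_W\otimes\C S)>0$ and $\overline V|_W\otimes\C S\neq 0$; choosing a genuine $\wti\sigma$ occurring in it and putting $\CO^\vee=\Theta(\wti\sigma)\subseteq\overline{\CO^\vee_{\mathsf r}}$, Theorem~\ref{dirac-ineq} gives $|\nu|\le|h^\vee_{\CO^\vee}/2|\le|h^\vee_{\mathsf r}/2|$. For part (2): if $\overline V$ were unitary it would be $*$-hermitian, so Theorem~\ref{dirac-ineq} applies; since $\overline V$ is not the trivial or the Steinberg module (the only modules with $|\nu|=|h^\vee_{\mathsf r}/2|$), $\overline V|_W$ contains an irreducible $\sigma_0\neq\triv,\sgn$, and \textbf{(A)} with Theorem~\ref{dirac-ineq} forces $|\nu|\le d(\sigma_0)\le|h^\vee_\sr/2|$, contradicting $|\nu|>|h^\vee_\sr/2|$.

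For part (3), assume $\overline V$ unitary. By \eqref{e:lengths}, $|\nu|\ge|h^\vee_{\CO'^\vee}/2|$ with $\CO'^\vee:=G^\vee\cdot\C C$, so $|\nu|<|h^\vee_\sr/2|$ forces $\CO'^\vee\neq\CO^\vee_{\mathsf r},\CO^\vee_\sr$, i.e.\ $\CO'^\vee\subseteq\overline{\CO^\vee_\ssr}$. If $\overline V|_W$ had a $W$-type outside $\Good(\CO^\vee_\sr)$, then \textbf{(B)} and Theorem~\ref{dirac-ineq} would give $|\nu|\le|h^\vee_\ssr/2|$, a contradiction; hence $\overline V|_W\subseteq\Good(\CO^\vee_\sr)$. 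Then the lowest $W$-type $\sigma(\CO'^\vee,\C E)$ lies in $\Good(\CO^\vee_\sr)$ with $\CO'^\vee\subsetneq\CO^\vee_\sr$, and by Lemma~\ref{l:subreg} the orbits with this property form a very short list (e.g.\ in type $A_{n-1}$ only the zero orbit and the minimal nilpotent, so that every $W$-type of $\overline V$ is among $\triv$ and $\refl$; in the other types similarly few possibilities). For each such orbit one invokes the known description of the simple $\bH$-modules $\overline V(\C C,\C L)$ with $G^\vee\cdot\C C=\CO'^\vee$ — they are Langlands subquotients of standard modules induced from discrete series on Levi subalgebras of small rank, hence in classical types governed by Tadi\'c-type results and in the exceptional cases by the tables of \cite{Ci} — and checks that such a module with $\overline V|_W\subseteq\Good(\CO^\vee_\sr)$ always has either a central character not $W$-conjugate to its negative (so it is not $*$-hermitian, a fortiori not unitary), or a central character of length $|h^\vee_\sr/2|$ (the shortest length attained by the ``small spherical'' modules, of $W$-structure $\triv+\refl$), or else is $*$-hermitian but non-unitary; in every case $|\nu|$ is kept out of the open interval $(|h^\vee_\ssr/2|,|h^\vee_\sr/2|)$.

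The main obstacle is precisely this last, type-by-type verification — especially for $B_n$ and the exceptional algebras, where the relevant small orbits and their modules are more numerous; parts (1), (2) and the reduction carried out in (3) are formal consequences of the Dirac inequality and the combinatorics of the sets $\Good(\CO^\vee)$.
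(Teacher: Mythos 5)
Your part (1), your bounds (A)--(B), and the first reduction in part (3) (all $W$-types of $\overline V$ must lie in $\Good(\CO^\vee_\sr)$, and the parameterizing orbit $G^\vee\cdot\C C$ cannot be regular or subregular) are correct and agree with the paper's proof. The genuine gap is the final step of part (3): you still have to exclude the existence of a simple hermitian module whose $W$-types all lie in $\Good(\CO^\vee_\sr)$ but whose central character has length strictly between $|h^\vee_\ssr/2|$ and $|h^\vee_\sr/2|$, and at this point you simply appeal to ``the known description of the simple $\bH$-modules'' and assert, without any verification or mechanism, that each candidate is non-hermitian, or has length exactly $|h^\vee_\sr/2|$, or is hermitian non-unitary. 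That case check is precisely the content of the proposition, and your sketch of it is not reliable: for instance, in type $A_{n-1}$ the constraint on the \emph{lowest} $W$-type only forces it to be $\triv$ or $\refl$; it does not follow (as you claim) that \emph{every} $W$-type of $\overline V$ is $\triv$ or $\refl$, and indeed the decisive point is the opposite: since $\mathsf{IM}(\overline V)$ is also unitary with the same central character length, its lowest $W$-type is likewise attached to a small orbit, which forces $\overline V$ to contain a $W$-type from $H=\{\sigma\in\Good(\CO^\vee_\sr)\mid \sigma \text{ Springer for }\CO^\vee_{\mathsf r}\text{ or }\CO^\vee_\sr\}$ in addition to one from $L=H\otimes\sgn$. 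The paper then kills such a module uniformly by the chain argument of Lemma~\ref{l:tensor-refl} together with the vanishing $\Hom_W[\sigma\otimes\refl,\sigma']=0$ for $\sigma\in L$, $\sigma'\in H$ (equation (\ref{e:link})). Some argument of this kind (or an actually executed type-by-type classification) is needed; your proposal contains neither.

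There is also a smaller unjustified step in part (2): from ``$\overline V$ is not the trivial or Steinberg module'' you conclude that $\overline V|_W$ contains some $\sigma_0\neq\triv,\sgn$. This requires knowing that, for $\rk(\fg^\vee)\ge 2$, a simple module all of whose $W$-types are $\triv$ or $\sgn$ must be the trivial or Steinberg module --- which is exactly Lemma~\ref{l:tensor-refl} (if $\triv$ occurs then either $\overline V|_W=\triv$ or $\refl$ occurs), and your parenthetical justification (``the only modules with $|\nu|=|h^\vee_{\mathsf r}/2|$'') is false as stated (there are many simple modules at that central character) and in any case does not supply the needed implication. This gap is easily repaired by citing Lemma~\ref{l:tensor-refl}, but the missing verification in part (3) is a substantive omission.
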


\begin{proof}(1) This is immediate from the Dirac inequality, Theorem \ref{dirac-ineq}, since the maximal $d(\sigma)$ (as in definition (\ref{d:sigma})) is obtained for $\sigma=\triv$ or $\sgn$ and in that case, $d(\triv)=d(\sgn)=|h_{\mathsf{r}}^\vee/2|$. (Of course, this bound for the unitary dual is well known as noted in the introduction.)

\smallskip

(2) Suppose $\overline V$ is unitary and $|\nu|>|h_{\mathsf{sr}}^\vee/2|$. By Corollary \ref{c:good}, every $\sigma\in \Irr W$ that occurs in the restriction of $\overline V$ to $W$ must be one of the $W$-types in $\Good(\CO^\vee_{\mathsf r})=\{\triv,\sgn\}$. But then $\overline V$ is as in the proof of Theorem \ref{t:unitary}(1), and by the same argument, it must be the trivial or the Steinberg module. In particular, $\nu\in W\cdot h^\vee_{\mathsf r}/2$.

\smallskip

(3) Again, suppose $\overline V$ is unitary and $|\nu|>|h_{\mathsf{ssr}}^\vee/2|$. Under the assumptions on $\mathfrak g^\vee$, all nilpotent orbits other than $\CO^\vee_{\mathsf r}$ and $\CO^\vee_\sr$ are smaller than $\CO^\vee_\ssr$ in the closure ordering. Therefore, Corollary \ref{c:good} implies that 

\smallskip

 (i) {\it the only $W$-types that occur in $\overline V|_W$ are the ones in $\Good(\CO^\vee_\sr)$.}

\smallskip

I claim that the only modules with this property have $|\nu|\ge |h^\vee_\sr/2|$; the argument is essentially part of the proof of Theorem \ref{t:unitary}. 
Suppose that $|\nu|<|h_{\mathsf{sr}}^\vee/2|$. Let $\sigma$ be a lowest $W$-type for $\overline V$. Because of the condition on $\nu$, $\sigma$ must be attached in the Springer correspondence to a nilpotent orbit $\CO^\vee$ other than $\CO^\vee_{\mathsf r}$ and $\CO^\vee_\sr$. Moreover, since $\overline V$ is unitary if and only if $\mathsf{IM}(\overline V)$ is unitary, the same restriction holds for the lowest $W$-types of $\mathsf{IM}(\overline V)$. Recall that the $W$-types that occur in $\mathsf{IM}(\overline V)$ are the $\sgn$-dual of those in $\overline V$. So the second constraint is:

\smallskip

(ii) {\it The lowest $W$-types in $\overline V|_W$ and the lowest $W$-types in $\overline V|_W\otimes\sgn$ are not attached to the regular or subregular nilpotent orbits by Springer's correspondence.}

\smallskip

The last constraint is from Lemma \ref{l:tensor-refl} in the form used already in the proof of Theorem \ref{t:unitary}(3) (see the case of $E_6$):

\smallskip

(iii) {\it if $\sigma$ is a lowest $W$-type of $\overline V$ and $\sigma'$ is a different $W$-type occurring in $\overline V|_W$, then there must exist a chain of $W$-types $\sigma_0=\sigma$, $\sigma_1$,\dots, $\sigma_m=\sigma'$ such that $\sigma_{i+1}$ occurs in $\sigma_i\otimes\refl$ for all $1\le i\le m-1$, and all $\sigma_i$ occur in $\overline V|_W$.}

\smallskip

Using (i)--(iii), one can verify easily that there are no such modules for each type of $\mathfrak g^\vee$, $\rk(\fg^\vee)\ge 4$. To see this, partition the $W$-types from $\Good(\CO^\vee_\sr)$ into two disjoint subsets 
\begin{align*}H&=\{\sigma\in \Good(\CO^\vee_\sr)\mid \sigma \text{ is a Springer representation for }\CO_{\mathsf r}\text{ or }\CO_\sr\};\\
L&=\{\sigma\otimes\sgn\mid \sigma\in H\}.
\end{align*}
The fact that these two sets are disjoint can be seen from their explicit description: no $\sigma\in \Good(\CO^\vee_\sr)$ is $\sgn$-self-dual when $\rk(\fg^\vee)\ge 4$. Moreover, one can check immediately that
\begin{equation}\label{e:link}
\Hom_W[\sigma\otimes \refl,\sigma']=0,\text{ for all }\sigma\in L,~\sigma'\in H.
\end{equation}
Condition (ii) implies that $\overline V$ must contain at least one $W$-type $\sigma_L\in L$ and at least one $W$-type $\sigma_H\in H$. But then (\ref{e:link}) implies that condition (iii) is violated.



\end{proof}

\begin{remark}
\begin{enumerate}
\item The assumption on the Lie type of $\fg^\vee$ in Proposition \ref{p:gaps}(3) is necessary in order to have a spectral gap in the unitary dual. For example, when $\fg^\vee=\mathfrak{sp}(2n,\bC)$, the subregular nilpotent orbit is $(2n-2,2)$ and there are two nilpotent orbits immediately smaller in the closure ordering, which aren't comparable: $(2n-4,4)$ and $(2n-2,1,1)$. Since there is a complementary series from $(2n-2,1,1)$ with an endpoint at $(2n-2,2)$, it follows that there is no analogous spectral gap in this case. A similar situation occurs for even orthogonal Lie algebras.
\item The method of proof in Proposition \ref{p:gaps} using conditions (i)--(iii) could be applicable to other examples where one would like to construct simple unitary modules ``attached'' to large nilpotent orbits.
\end{enumerate}
\end{remark}

\subsection{}In the previous subsection, I used the Dirac inequality to determine unitary modules at large central characters. I now look at the other extreme.

\begin{proposition}\label{p:O-min}
There exists a unique nilpotent orbit $\CO^\vee_{\min}\in G^\vee\backslash\C N^\vee_\sol$ such that for every $\CO^\vee\in G^\vee\backslash\C N^\vee_\sol$, $\CO^\vee_{\min}\subset \overline{\CO^\vee}$. The explicit cases are:

$\mathfrak{sl}(n)$: $(1,2,\dots,k_A-1,k_A+1,\dots,\ell_A)$;

$\mathfrak{sp}(2n)$: $\CO^\vee_{\min}$ is obtained by starting with the partition $(2,2,4,4,\dots, 2\ell_C,2\ell_C)$ and removing:

\begin{itemize}
\item one $2k_C$ if  $k_C\le \ell_C$, or 
\item one $2(k_C-\ell_C)$ and one $2\ell_C$ if $k_C>\ell_C$;
\end{itemize}

$\mathfrak{so}(2n)$: start with the partition $(1,1,3,3,\dots,2\ell_D-1,2\ell_D-1)$ and remove

\begin{itemize}
\item one $1$ and one $2k_D-1$ if $k_D\le \ell_D$, or
\item one $2(k_D-\ell_D)+1$ and one $2\ell_D-1$ if $k_D>\ell_D$.
\end{itemize}

$\mathfrak{so}(2n+1)$: start with $(1,1,3,3,\dots,2\ell_C-1,2\ell_C-1,2\ell_C+1)$ and remove

\begin{itemize}
\item one $1$ and one $2 k_C-1$ if $k_C\le \ell_C$, or
\item one $2(k_C-\ell_C)-1$ and one $2\ell_C+1$ if $k_C>\ell_C$.
\end{itemize}

$G_2$: $G_2(a_1)$; 
$F_4$: $F_4(a_3)$; 
$E_6$: $D_4(a_1)$;
$E_7$: $A_4+A_1$;
$E_8$: $E_8(a_7)$;

where $\ell_A$, $\ell_B$, $\ell_C$, $\ell_D$, are the smallest integers such that:

\begin{itemize}
\item $\frac{\ell_A(\ell_A+1)}2\ge n$ and $k_A=\frac{\ell_A(\ell_A+1)}2-n$, 
\item  $\ell_C(\ell_C+1)\ge n$ and $k_C=\ell_C(\ell_C+1)-n$, 
\item $\ell_D^2\ge n$ and $k_D=\ell_D^2-n$.
\end{itemize}
\end{proposition}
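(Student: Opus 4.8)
The plan is to argue case by case over the simple type of $\fg^\vee$, using the explicit classification of the orbits in $\C N^\vee_\sol$ from \cite{CH}. Note first that the uniqueness assertion is essentially free: the closure ordering is a partial order, so if $\CO^\vee_1,\CO^\vee_2\in\C N^\vee_\sol$ both satisfy the stated property then $\CO^\vee_1\subseteq\overline{\CO^\vee_2}$ and $\CO^\vee_2\subseteq\overline{\CO^\vee_1}$, hence $\overline{\CO^\vee_1}=\overline{\CO^\vee_2}$ and $\CO^\vee_1=\CO^\vee_2$. Thus the task reduces to exhibiting, for each type, a candidate orbit $\CO^\vee_{\min}\in\C N^\vee_\sol$ and checking that $\CO^\vee_{\min}\subseteq\overline{\CO^\vee}$ for every $\CO^\vee\in\C N^\vee_\sol$. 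For the classical types I will combine the partition description of $\C N^\vee_\sol$ recalled above (distinct parts for $\mathfrak{sl}(n)$; all parts even with each multiplicity $\le 2$ for $\mathfrak{sp}(2n)$; all parts odd with each multiplicity $\le 2$ for $\mathfrak{so}(m)$) with the Gerstenhaber--Hesselink description of the closure ordering as the dominance order on partitions.

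For a classical type I would proceed in two steps. First, verify that the partition $\lambda_{\min}$ written in the statement does lie in $\C N^\vee_\sol$: a direct check that it has the correct total, that every part has the prescribed parity, and that after the indicated deletions from the ``multistaircase'' (the partition with each allowed small part appearing twice) no part occurs with multiplicity exceeding two — the subtraction recipe, removing $2k_A$, $2k_C$ or $2k_D$ boxes as either one part or two parts according to the size of $k$, is engineered precisely so that the multistaircase loses the right number of boxes while remaining inside $\C N^\vee_\sol$. Second, show that $\lambda_{\min}$ is dominated by every admissible partition $\lambda$. For $\mathfrak{sl}(n)$ this is cleanest after transposing: $\lambda$ has distinct parts if and only if consecutive parts of $\lambda^T$ differ by at most $1$, and a short partial-sum estimate shows that among partitions of $n$ with this property the ``near-staircase'' $\lambda_{\min}^T$ (a staircase with the single repeated value $\ell_A-k_A$) is dominance-maximal; equivalently, every such $\lambda^T$ has at least $\ell_A$ parts and is front-loaded no more than $\lambda_{\min}^T$. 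The symplectic and orthogonal cases run along the same lines: any admissible partition of the given size has at least as many parts as $\lambda_{\min}$, and comparing dominance partial sums against the multistaircase-minus-correction yields the minimality.

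For the exceptional types $G_2,F_4,E_6,E_7,E_8$ the argument is a finite verification. I would read off the list of orbits in $\C N^\vee_\sol$ from \cite{CH} and compare it with the Hasse diagram of the nilpotent cone (as in \cite{Ca}), observing in each case that the listed orbits have a unique minimal element, namely $G_2(a_1)$, $F_4(a_3)$, $D_4(a_1)$, $A_4+A_1$ and $E_8(a_7)$ respectively. For $E_7$ the fact that $A_4+A_1$ is the smallest orbit in $\C N^\vee_\sol$ has already been recorded in the discussion preceding Proposition~\ref{p:span}, so only the remaining four types need to be inspected.

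The part I expect to require the most care is the combinatorial minimality claim for the classical types, and in particular the symplectic and even-orthogonal cases, where one has to track simultaneously the parity of the parts, the multiplicity-$\le 2$ constraint, and the dominance partial sums, and check that the ``remove one part versus remove two parts'' dichotomy in the recipe always produces the genuine dominance-minimum (including the borderline cases where $k_C$ or $k_D$ is close to $\ell_C$ or $\ell_D$). By contrast, once the tables of \cite{CH} and the closure diagrams are in hand, the exceptional cases are mechanical.
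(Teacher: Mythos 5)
Your proposal is correct in outline and follows the paper's overall strategy: uniqueness is essentially automatic, the exceptional types are a finite inspection of the list of orbits in $\C N^\vee_\sol$ against the closure diagrams, and the classical types reduce to an elementary assertion about the dominance order on the partitions parametrising $\C N^\vee_\sol$. Where you genuinely differ is the mechanism for the classical minimality. The paper argues by descent: it exhibits local box-moving operations preserving membership in $\C N^\vee_\sol$ (in type $A$, smoothing a gap bigger than $2$, or two adjacent gaps equal to $2$) and concludes that the only partition admitting no move is the one in the statement; you instead verify directly, by partial-sum estimates against the (multi)staircase shape, that the candidate is dominated by every admissible partition. Both routes are elementary and both leave the $B/C/D$ bookkeeping at sketch level, just as the paper does; if anything, your global comparison buys some robustness, since a descent argument must also check that the moves terminate only at the claimed partition, and the two moves written in the paper do not quite suffice for this (for $n=14$ the distinct-part partition $(1,3,4,6)$ admits neither move yet strictly dominates the minimum $(2,3,4,5)$, so one needs an extra long-range move or, as you propose, a direct dominance check). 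Two small slips to repair when you carry this out: the transpose criterion in type $A$ should say that $\lambda$ has distinct parts iff the parts of $\lambda^T$ drop by at most $1$ at each step \emph{and} the smallest part of $\lambda^T$ is $1$ (equivalently, include a trailing zero when taking differences); and in the symplectic/orthogonal sentence the inequality on the number of parts is reversed --- what you need, as a consequence of $\lambda_{\min}$ being dominance-minimal, is that every admissible partition has \emph{at most} as many parts as $\lambda_{\min}$ (equivalently, after transposing, largest part at least that of $\lambda_{\min}$), which follows from the same minimal-box-count estimate that defines $\ell_C$ and $\ell_D$.
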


\begin{proof}
For exceptional types, the claim can be checked by inspection using the tables and closure ordering in \cite{Ca}. For classical types, the verification of the claim (case by case) is elementary using the combinatorial interpretation of the closure ordering on nilpotent orbits. 

For example, for type $A_{n-1}$, a nilpotent orbit is in $\C N^\vee_\sol$ if and only if the parts of the corresponding partition are all distinct. Suppose $\lambda=(\lambda_1,\lambda_2,\dots,\lambda_m)$ is a partition of $n$ with $0<\lambda_1<\lambda_2<\dots<\lambda_m$. Set $\lambda_0=0$. If there is an $0\le i\le m-1$ such that $\lambda_i+2<\lambda_{i+1}$, then we can replace $(\lambda_i,\lambda_{i+1})$ by $(\lambda_i+1,\lambda_{i+1}-1)$ and form a smaller partition with distinct parts in the closure ordering. This means that we can assume that for all $0\le i\le m-1$, $\lambda_{i+1}-2\le \lambda_i<\lambda_{i+1}$. Now suppose there exists $1\le i\le m-1$ such that $\lambda_{i-1}+2=\lambda_i$ and $\lambda_i+2=\lambda_{i+1}$. Then we can replace $(\lambda_{i-1},\lambda_i,\lambda_{i+1})$ by $(\lambda_{i-1}+1,\lambda_i,\lambda_{i+1}-1)$ and form a partition with distinct parts which is smaller in the closure ordering. This implies that there is a unique minimal partition with distinct parts as in the statement of the proposition.
For the other classical Lie algebras, the argument is similar and I skip the details. 
\end{proof}

Clearly, for $\CO_{\min}^\vee$, the first part of the Dirac inequality does not yield any information since 
\[\Good(\CO^\vee_{\min})=\Irr W.
\]
It is therefore an interesting question to investigate the unitarity of the simple $\bH$-modules with central character precisely $h^\vee_{\min}/2$, particularly in the most ``symmetric" cases for $\CO^\vee_{\min}$:

\begin{enumerate}
\item $(1,2,3,\dots,\ell)$ in $\mathfrak{sl}(n)$, $n=\frac{\ell(\ell+1)}2$; $(2,2,4,4,\dots,2\ell,2\ell)$ in $\mathfrak{sp}(2n)$, $n=\ell(\ell+1)$; $(1,1,3,3,\dots,2\ell-1,2\ell-1)$ in $\mathfrak{so}(2n)$, $n=\ell^2$; 
\item the list for exceptional Lie types as in Proposition \ref{p:O-min}.
\end{enumerate}

\begin{remark}
If $G$ is of type $G_2$, $X(h^\vee_{\min}/2)$ lies at the boundary of the unramified complementary series \cite{Ci-F4}, hence every subquotient is unitary. The same is true for the other rank $2$ groups, but not when the rank of $G$ increases. Roughly, the unramified complementary series is at most
\[|\nu|=O(\sqrt n),
\]
whereas the bound given by $\CO^\vee_{\min}$ in the Dirac inequality is 
\[|h^\vee_{\min}/2|=O(n).
\]
\end{remark}

\begin{proposition}\label{th:F4}
Let $G=F_4$, $\CO^\vee_{\min}=F_4(a_3)$, and $\nu=h^\vee_{\min}/2$. There are $19$ non-isomorphic irreducible representations with parameter $\nu$, $18$ of which are unitary. The explicit description is given in Table \ref{F4}.
\end{proposition}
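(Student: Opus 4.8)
\textbf{Proof plan for Proposition \ref{th:F4}.}

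The plan is to reduce the statement to a finite, explicit computation using the already-established $F_4$ data from \cite{Ci,Ci-F4}, in the same style as the verification of Theorem \ref{t:unitary} for exceptional types. First I would recall that $\CO^\vee_{\min}=F_4(a_3)$ is the minimal orbit in $\C N^\vee_\sol$ for $F_4$ by Proposition \ref{p:O-min}, so its associated $h^\vee_{\min}/2$ is the central character in question; with the standard $W(F_4)$-invariant form one computes $h^\vee_{\min}$ explicitly from the weighted Dynkin diagram of $F_4(a_3)$. The list of simple $\bH(F_4)$-modules with this central character, together with their restrictions to $W(F_4)$ (deducible from the Kazhdan--Lusztig/Springer data and the Green functions for $F_4$), is available in \cite[\S 5.2]{Ci}; from that list one reads off that there are exactly $19$ non-isomorphic irreducibles, and records the $W$-structure of each in Table \ref{F4}. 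Since $\Good(\CO^\vee_{\min})=\Irr W$, the first part of the Dirac inequality gives nothing here, so the argument must instead use the classification of the unitary dual of $\bH(F_4)$ directly.

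Next I would invoke the unitarity classification for $\bH(F_4)$-modules from \cite{Ci-F4} (and the complementary-series/deformation arguments there, together with the signature computations on lowest $W$-types), applied to exactly the $19$ modules at central character $h^\vee_{\min}/2$. For each module one checks: (a) whether it is hermitian — the non-hermitian ones are immediately excluded; (b) for the hermitian ones, whether the invariant form is positive definite, which is settled either by exhibiting the module as a point on a complementary series whose relevant interval/endpoint has already been computed to be unitary, or by a direct Hermitian-form signature computation on a small set of lowest $W$-types, or by the fact that tempered (unitarily induced from discrete series) modules are automatically unitary. I would present the outcome in Table \ref{F4} with a "Unitary?" column, noting in each case the reason (tempered, endpoint/interior of complementary series, spherical, or non-unitary); the Iwahori-Matsumoto involution, which preserves this central character and preserves unitarity, cuts the work roughly in half by pairing up modules.

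Finally I would identify the single non-unitary module among the $19$. As in the analogous $C_3$ computation (the case $3_{b,t}$ in Table \ref{C3}), I expect exactly one module to be hermitian but indefinite, detected by a signature argument with lowest $W$-types; all others are unitary. The conclusion is then that $18$ of the $19$ are unitary.

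\textbf{Main obstacle.} The genuinely delicate point is part (b) above for the one or two "borderline" hermitian modules — the ones not visibly tempered and not obviously interior points of a known complementary series. For those, establishing definiteness (or indefiniteness) of the invariant form requires the explicit Hermitian-form signature computation on carefully chosen lowest $W$-types, exactly the kind of subtle case flagged for $C_3$; this is where the proof really rests on the machinery of \cite{Ci-F4} rather than on any soft argument, and it is the step most prone to computational error. Everything else — the count of $19$, the $W$-structures, the temperedness claims, and the symmetry under $\mathsf{IM}$ — is bookkeeping with the tables in \cite{Ci}.
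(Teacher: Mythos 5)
Your plan is essentially the paper's own proof: read off the $19$ simple modules and their $W$-structures from the multiplicity data in \cite{Ci} (the relevant section is \S 5.1, not \S 5.2), then settle unitarity module-by-module via temperedness, endpoints of complementary series, the Iwahori--Matsumoto involution, and a lowest-$W$-type signature computation from \cite{Ci-F4} for the single non-unitary, $\mathsf{IM}$-self-dual module (the one labelled $7_b$, attached to $A_1+\wti A_1$, detected on $\phi_{8,3}'$ and $\phi_{8,3}''$). The structure, the division of labour between bookkeeping and the one delicate signature argument, and the role of $\mathsf{IM}$-duality all match the paper's argument.
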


\begin{table}[h]\centering
\begin{tabular}{|c|c|c|c|c|c|}
\hline
$\CO^\vee=G^\vee\cdot \C C$ &$\C L$ &Label &$W$-structure &Unitary? &$\mathsf{IM}-dual$\\
\hline
$F_4(a_3)$ &$(4)$ &$12$ &$\phi_{12,4}+\phi_{8,9}''+\phi_{8,9}'+\phi_{9,10}+\phi_{4,15}+\phi_{1,24}$ &Yes &$[0]$\\
 &$(31)$ &&$\phi_{9,6}''+\phi_{8,9}''+\phi_{2,16}''$ &Yes &$[4]$\\
 &$(22)$ &&$\phi_{6,6}''+\phi_{4,13}$ &Yes &$[6,(2)]$\\
 &$(211)$ &&$\phi_{1,12}''$ &Yes &$[8_a,(11)]$\\
 \hline
$C_3(a_1)$ &$(2)$ &$11$ &$\phi_{16,5}+\phi_{9,10}$ &Yes &$[7_a]$ \\
 &$(11)$ &&$\phi_{4,7}''$ &Yes &$[9]$ \\
 \hline
 $A_1+\wti A_2$ &$1$ &$10_a$ &$\phi_{6,6}'$ &Yes &$[10_a]$\\
 \hline
 $\wti A_1+A_2$ &$1$ &$9$ &$\phi_{4,7}'$ &Yes &$[11,(11)]$\\
 \hline
 $B_2$ &$(2)$ &$10_b$ &$\phi_{9,6}'+\phi_{8,9}'+\phi_{2,16}'$ &Yes &$[6,(11)]$\\
 &$(11)$ &&$\phi_{4,8}$ &Yes &$[10_b,(11)]$\\
 \hline
 $A_2$ &$(2)$ &$8_a$ &$\phi_{8,3}'+\phi_{12,4}+\phi_{9,6}'+\phi_{8,9}'$ &Yes &$[8_b]$\\
 &$(11)$ & &$\phi_{1,12}'$ &Yes &$[12,(211)]$\\
 \hline
 $\wti A_2$ &$1$ &$8_b$ &$\phi_{8,3}''+\phi_{12,4}+\phi_{9,6}''+\phi_{8,9}''$ &Yes &$[8_a,(2)]$\\
 \hline
 $A_1+\wti A_1$ &$1$ &$7_a$ &$\phi_{9,2}+\phi_{16,5}$ &Yes &$[11,(2)]$\\
 \hline
 $A_1+\wti A_1$ &$1$ &$7_b$ &$\phi_{9,2}+2\phi_{16,5}+\phi_{8,3}''+\phi_{8,3}'+\phi_{8,9}''$  &No&$[7_b]$\\
 &&&$+\phi_{8,9}'+2\phi_{12,4}+\phi_{9,6}''+\phi_{9,6}'+\phi_{9,10}$&&\\
 \hline
 $\wti A_1$ &$(2)$ &$6$ &$\phi_{4,1}+\phi_{6,6}''$ &Yes &$[12,(22)]$\\
 &$(11)$ && $\phi_{2,4}''+\phi_{8,3}''+\phi_{9,6}''$ &Yes &$[10_b,(2)]$\\
 \hline
 $A_1$ &$1$ &$4$ &$\phi_{2,4}'+\phi_{8,3}'+\phi_{9,6}'$ &Yes &$[12,(31)]$\\
 \hline
 $0$ &$1$ &$0$ &$\phi_{1,0}+\phi_{4,1}+\phi_{9,2}+\phi_{8,3}''+\phi_{8,3}'+\phi_{12,4}$ &Yes &$[12,(4)]$\\
 \hline

\end{tabular}
\smallskip
\caption{Central character $h^\vee_{\min}/2$ for $F_4(a_3)$ in $F_4$}\label{F4}

\end{table}

\begin{proof}
In \cite[\S 5.1]{Ci}, the classification and multiplicities of simple modules in the standard induced modules are computed for $F_4(a_3)$. Since we know the $W$-structure of standard modules using Springer representations and parabolic induction, we can deduce from this the $W$-structure of the simple modules as listed in Table \ref{F4}. The labels in the table refer to the dimension of $\C C$.Next, for which of this simple modules, we can check the unitarisability using the explicit results and calculations in \cite{Ci-F4}. 

The first $4$ simple modules, parameterised by $F_4(a_3)$ itself, are all tempered and therefore unitary. Their Iwahori-Matsumoto duals are:
\begin{equation}
\begin{aligned}
\mathsf{IM}([12,(4)])=[0],\ \mathsf{IM}([12,(31)])=[4],\ \mathsf{IM}([12,(22)])=[6,(2)],\\ \mathsf{IM}([12,(211)])=[8_a,(11)],\\
\end{aligned}
\end{equation}
which proves the unitarisabiity of the $\mathsf {IM}$-duals.

The two simple modules parameterised by $C_3(a_1)$ are both endpoints of complementary series induced from the two subregular discrete series of $C_3$, and hence unitary. Their $\mathsf{IM}$-duals are
\begin{equation}
\mathsf{IM}([11,(2)])=[7_a],\quad \mathsf{IM}([11,(11)])=[9].
\end{equation}
The simple modules parameterised by $A_1+\wti A_2$ and $\wti A_1+A_2$ are both endpoints of complementary series of the induced from the Steinberg module on the corresponding parabolic subalgebra. Moreover, $\mathsf{IM}([10_a])=[10_a]$.

The two simple modules parameterised by $B_2$ are both endpoints of complementary series of the induced from the two subregular tempered modules on $B_3$. We record that 
\begin{equation}
\mathsf{IM}([10_b,(2)])=[6,(11)], \quad \mathsf{IM}([10_b,(11)])=[10_b,(11)],
\end{equation}
which in particular shows that the module parameterised by $\wti A_1$ and the sign local system, $[6,(11)]$ is also unitary.

The simple module parameterised by $\wti A_2$ is an endpoint of the complementary series induced from the Steinberg module on the $\wti A_2$ parabolic subalgebra. Its $\mathsf{IM}$-dual is $\mathsf{IM}([8_b])=[8_a,(2)]$, which in particular shows that the module parameterised by $[A_2,(2)]$ is also unitary.

We are only left with the module parameterised by $[7_b,1]$ which one of the modules corresponding to the nilpotent orbit $A_1+\wti A_1$. This is $\mathsf{IM}$-self-dual and it is {\it not} unitary, which can be deduced by computing the signatures of the $W$-types $\phi_{8,3}'$ and $\phi_{8,3}''$, see \cite[p. 120]{Ci-F4}.

\end{proof}

\begin{remark}
For $F_4$, the ``statistics" at central character $h^\vee/2$ are as follows:
\begin{enumerate}
\item $\CO^\vee=F_4$: \ \ \ \ \ $16$ simple modules of which $2$ are unitary;
\item  $\CO^\vee=F_4(a_1)$: $20$ simple modules of which $4$ are unitary;
\item  $\CO^\vee=F_4(a_2)$: $18$ simple modules of which $8$ are unitary;
\item  $\CO^\vee=F_4(a_3)$: $19$ simple modules of which $18$ are unitary.
\end{enumerate}
\end{remark}

\end{document}